\newtheorem{theorem}{Theorem}[section]
\newtheorem{proposition}[theorem]{Proposition}
\newtheorem{question}[theorem]{Question}
\newtheorem{corollary}[theorem]{Corollary}
\newtheorem{lemma}[theorem]{Lemma}
\newtheorem{example}[theorem]{Example}
\newtheorem{definition}[theorem]{Definition}
\newenvironment{proof*}{\vskip 2mm\noindent {}}{\hfill $\Box$ \vskip 2mm}
\def\C{\mathbb C}
\def\D{\mathbb D}
\def\N{\mathbb N}
\def\R{\mathbb R}
\def\Z{\mathbb Z}
\def\eps{\varepsilon}
\renewcommand{\Im}{\operatorname{Im}}
\renewcommand{\Re}{\operatorname{Re}}
\def\ZTH{\mathcal Z(\Theta)}
\def\HTH{H^\infty / \Theta H^\infty}
\begin{document}

\title
{Sharp Invertibility in Quotient Algebras of $H^\infty$}

\author[A. Borichev, A. Nicolau, M. Ouna\"ies, and P.J. Thomas]{Alexander Borichev, Artur Nicolau, Myriam Ouna\"ies, and Pascal J. Thomas}

\address{A. Borichev: Aix Marseille Universit\'e\\
CNRS\\ I2M\\ 13331 Marseille\\ France}
\email{alexander.borichev@math.cnrs.fr}

\address{A. Nicolau: Departament de Matem{\`a}tiques\\
Universitat Aut\`onoma de Barcelona\\ and Centre de Recerca Matem{\`a}tica \\08193 Barcelona\\ Spain}
\email{artur.nicolau@uab.cat}

\address{M. Ouna\"ies: IRMA, Universit\'e de Strasbourg \\
France}
\email{ounaies@math.unistra.fr}

\address{P.J. Thomas: Universit\'e de Toulouse\\ UPS, INSA, UT1, UTM \\
Institut de Math\'e\-ma\-tiques de Toulouse\\
F-31062 Toulouse, France} 
\email{pascal.thomas@math.univ-toulouse.fr}

\begin{abstract}
We consider inner functions $\Theta$ with the zero set $\ZTH$ such that the quotient algebra $H^\infty / \Theta H^\infty$ satisfies the Strong Invertibility Property (SIP), 
that is for every $\varepsilon>0$ there exists $\delta>0$ such that 
the conditions $f \in H^\infty$, $\|[f]\|_{H^\infty/ \Theta H^\infty}=1$, $\inf_{\ZTH} |f| \ge 1-\delta$ imply that $[f]$ is invertible in $H^\infty / \Theta H^\infty$ and $\| 1/ [f] \|_{H^\infty/ \Theta H^\infty}\le 1+\varepsilon$. We prove that the SIP is equivalent to the maximal asymptotic growth of $\Theta $ away from its zero set. We also describe inner functions satisfying the SIP in terms of the narrowness of their sublevel sets and relate the SIP to the Weak Embedding Property introduced by P.Gorkin, R.Mortini, and N.Nikolski as well as to inner functions whose Frostman shifts are Carleson--Newman Blaschke products. We finally study divisors of inner functions satisfying the SIP. We describe geometrically the zero set of inner functions such that all its divisors satisfy the SIP. We also prove that a closed subset $E$ of
the unit circle is of finite entropy if and only if any singular inner function associated to a singular measure supported on $E$ is a divisor of an inner function satisfying the SIP.
\end{abstract}

\keywords{ Inner Functions, Sharp Invertibility, Maximal Growth, Weak Embedding, Sublevel Sets, Carleson-Newman and Thin Blaschke products, Entropy.}

\subjclass[2000]{30H05, 30J05, 30J15, 30H80}

\thanks{
The first author was supported by ANR-24-CE40-5470. 
The second author was supported in part by the Generalitat de Catalunya (grant 2021 SGR 00071), the Spanish Ministerio de Ciencia e Innovaci\'on (project PID2021-123151NB-I00) and the Spanish Research Agency through the Mar\'ia de Maeztu Program (CEX2020-001084-M)}

\maketitle

%\tableofcontents

\section{Introduction}

% \subsection{Definitions, and a basic equivalence.}

Let $H^\infty$ be the algebra of bounded analytic functions on the unit disc $\D=\{z\in \C: |z|<1\}$ of the complex plane endowed with the supremum norm 
$\|f\|_\infty = \sup_{\D} |f|$. A function $\Theta$ in $H^\infty$ is called inner if it has radial limits of modulus $1$ at almost every point of the unit circle $\partial \D$. Given an inner function $\Theta$, the quotient algebra
$ H^\infty/ \Theta H^\infty$ is a Banach algebra with the natural norm $\|[f]\|_{\HTH} = \inf \{ \|f+g\Theta\|_\infty: g\in H^\infty\}$, which has recently attracted some attention. See for instance \cite{Bo, BNT, GM, GMN, N1, N2, NV}. %, for instance the functional calculus of the model operator $M_\Theta$. 
The \emph{visible spectrum} of $ H^\infty/ \Theta H^\infty$ is $\ZTH$, the zero set of $\Theta$, in the sense that the evaluation at a zero of $\Theta$ is an obvious multiplicative character of $ H^\infty/ \Theta H^\infty$. 
%Similarly the set $\{ f(\lambda), \Theta(\lambda)=0 \}$ is called the \emph{visible spectrum} of $[f]$ and 
Intuitively, the zeroes of $\Theta$ are the only places where $[f] \in H^\infty/ \Theta H^\infty$ takes well-defined values.  

Let  $\rho(z,w)
= \left| \dfrac{z-w}{1-\bar w z}\right|$  denote the pseudohyperbolic distance between the points $z,w \in \D$. Given an inner function $\Theta$ let us consider
\begin{equation}
    \label{eta}    
    \eta_\Theta (t):= \inf\left\{ |\Theta(z)|: z \in \D,\, \rho(z, \ZTH) \ge t \right\}, 0<t<1.
\end{equation}
This is a non-decreasing function. It is not necessarily continuous (see an elementary example in Section~\ref{dis5}).  
We define a function $\varkappa_\Theta$ pseudo-inverse to $\eta_\Theta$ by 
$\varkappa_\Theta(\lambda):= \inf\{ t : \eta_\Theta (t) > \lambda\}$, $\lambda\in (0,1)$. 

If $f\in H^\infty$ is such that the class $[f]$ is invertible in $H^\infty/ \Theta H^\infty$, then $\inf_{\ZTH} |f| >0$. Following P.Gorkin, R.Mortini, and N.Nikolski (\cite{GMN}),we say that an inner function $\Theta$ satisfies the Weak Embedding Property (WEP) if the converse implication holds for any $f \in H^\infty$, that is, if for any $f \in H^\infty$ the condition 
$\inf_{\ZTH} |f|>0$ implies that $[f]$ is invertible in $H^\infty/ \Theta H^\infty$. They proved that $\Theta $ satisfies the WEP if an only if $\eta_\Theta (t) >0$ for any $t>0$, see \cite{GMN}. In the subsequent paper \cite{NV} N.Nikolski and V.Vasyunin proved that for any $0<c<1$ there exists a Blaschke product $\Theta$ such that $\eta_\Theta (t) =0$ if and only if $0 \leq t \leq c$. In this paper we study inner functions $\Theta$ for which sharp invertibility in $H^\infty / \Theta H^\infty$ holds. 
% for any $f\in H^\infty$ whose modulus is close to $1$ on the zeros of $\Theta$ implies that $[f]$ is invertible in $H^\infty / \Theta % H^\infty$ and the norm of the inverse is close to 1.

%If $[f]$ is invertible in $ H^\infty/ \Theta H^\infty$, then clearly
%\begin{equation}
%\label{trivial}
%\|[f]^{-1}\| \ge \sup \left\{ \frac1{|f(\lambda)|}: \Theta(\lambda)=0 \right\}.
%\end{equation}
%We might wonder when this is close to an equality. Let us normalize the situation by taking $\|[f]\|=1$. 
%If the supremum on the righthand side is greater than $1+\delta>1$, there clearly is no hope, as %can be checked
%by looking at Pick-Nevanlinna problems for finite subsets of $\Theta^{-1}(0)$. If however we let it tend to 
%$1$, then so can the norm of the inverse, provided the inner function $\Theta$ satisfies
%an appropriate condition, which we shall see can be related to previously studied classes of inner %functions.

\begin{definition}
\label{defsip}
An inner function $\Theta$ satisfies the \emph{Sharp Invertibility Property} {\rm(SIP)} if for any $\eps>0$,
there exists $\delta>0$ such that for any $f\in H^\infty$, the conditions $\|[f]\|\le 1$ and $\inf_{\ZTH} |f| \ge 1-\delta$
imply that $[f]$ is invertible in $ H^\infty/ \Theta H^\infty$ and $\|[f]^{-1}\|_{\HTH}\le 1+\eps$.
\end{definition}

%This is of course inspired by the pioneering work of Gorkin, Mortini and Nikolski \cite{GMN}, 
% where the main question investigated was whether finiteness of the righthand side of \eqref{trivial}
% implies the invertibility of $[f]$. 
As in \cite{GMN}, one can relate the SIP to the growth of  
$|\Theta|$ away from its zero set. 
% One of the main results in \cite{GMN} is that $\Theta$ satisfies the WEP if and only if $\eta_\Theta (t)>0$
%for any $t>0$. 
The WEP and the SIP are independent properties, although analogous. Our first result shows that the behavior of $\eta_\Theta(t)$ when $t$ tends to $1$ is relevant for the SIP.

\begin{theorem}
\label{motiv}
An inner function $\Theta$ satisfies the Sharp Invertibility Property {\rm(SIP)} if and only if
it has Maximal Asymptotic Growth {\rm(MAG)}, that is, $\lim_{t\to 1} \eta_\Theta (t)=\sup_{0<t<1} \eta_\Theta (t)=1$. 
\end{theorem}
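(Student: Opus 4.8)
The plan is to prove the two implications separately, using throughout the elementary remark that $\eta_\Theta$ is non-decreasing, so $\sup_{0<t<1}\eta_\Theta(t)=\lim_{t\to1}\eta_\Theta(t)$ automatically; hence MAG is equivalent to $\varkappa_\Theta(\lambda)<1$ for every $\lambda\in(0,1)$, i.e.\ to the assertion that each sublevel set $\{z\in\D:|\Theta(z)|\le\lambda\}$ lies at pseudohyperbolic distance $<1$ from $\ZTH$, with (by the very definitions) $|\Theta(z)|\le\lambda\Rightarrow\rho(z,\ZTH)\le\varkappa_\Theta(\lambda)$.

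For $\text{SIP}\Rightarrow\text{MAG}$ I would argue by contraposition. If MAG fails, then $L:=\sup_t\eta_\Theta(t)<1$, so for each $n$ there is $z_n\in\D$ with $\rho(z_n,\ZTH)\ge1-1/n$ and $|\Theta(z_n)|\le L+1/n$; for large $n$ we get $z_n\notin\ZTH$ and $|\Theta(z_n)|\le\lambda_0:=(1+L)/2<1$. Take $f_n:=b_{z_n}$, the Blaschke factor vanishing at $z_n$: then $\|[f_n]\|\le\|f_n\|_\infty=1$ and $\inf_{\ZTH}|f_n|=\rho(z_n,\ZTH)\to1$, so $f_n$ meets the hypotheses of the SIP with any prescribed $\delta$ once $n$ is large. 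Since $|b_{z_n}|+|\Theta|$ is bounded below on $\D$ ($|\Theta|\ge|\Theta(z_n)|/2$ near $z_n$, $|b_{z_n}|$ bounded below away from $z_n$), the corona theorem shows $[f_n]$ is invertible; the crux is then the lower bound $\|[f_n]^{-1}\|_{\HTH}\ge 1/|\Theta(z_n)|\ge1/\lambda_0>1$, which contradicts the SIP for $\varepsilon=(1/\lambda_0-1)/2$. To get this sharp bound I would pass to the model space $K_\Theta=H^2\ominus\Theta H^2$ and invoke Sarason's theorem: $g\mapsto M^\Theta_g:=P_{K_\Theta}M_g|_{K_\Theta}$ is an algebra homomorphism with kernel $\Theta H^\infty$, isometric on the quotient, so $\|[f_n]^{-1}\|_{\HTH}=\|(M^\Theta_{f_n})^{-1}\|=\|((M^\Theta_{f_n})^{*})^{-1}\|\ge\|k^\Theta_{z_n}\|/\|(M^\Theta_{f_n})^{*}k^\Theta_{z_n}\|$, where $k^\Theta_\lambda$ is the reproducing kernel. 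A short computation, using $P_{K_\Theta}(\overline h\,k_\lambda)=\overline{h(\lambda)}\,k^\Theta_\lambda$ for $h\in H^\infty$ and the almost-isometry identity $\|M^{*}_{b_\lambda}u\|^2=\|u\|^2-|\langle u,k_\lambda\rangle|^2/\|k_\lambda\|^2$ for a single Blaschke factor, gives $(M^\Theta_{b_{z_n}})^{*}k^\Theta_{z_n}=-\overline{\Theta(z_n)}\,M^{*}_{b_{z_n}}(\Theta k_{z_n})$ and $\|(M^\Theta_{b_{z_n}})^{*}k^\Theta_{z_n}\|=|\Theta(z_n)|\,\|k^\Theta_{z_n}\|$, whence the claim. (When $L=0$ one even gets $\|[f_n]^{-1}\|\to\infty$, recovering the failure of the WEP as a special case.)

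For $\text{MAG}\Rightarrow\text{SIP}$, fix $\varepsilon>0$ and let $f\in H^\infty$ with $\|[f]\|\le1$ and $\inf_{\ZTH}|f|\ge1-\delta$. First replace $f$ by a best (or near-best) representative, so that $\|f\|_\infty\le1$; this changes neither $[f]$ nor the values of $f$ on $\ZTH$. The basic estimate is a Schwarz--Pick bound on sublevel sets: if $|\Theta(z)|\le\lambda$ then $\rho(z,\ZTH)\le\varkappa_\Theta(\lambda)=:r$, so applying the Schwarz--Pick inequality to $f$ at the nearest point $\zeta\in\ZTH$ yields $|f(z)|\ge\bigl((1-\delta)-r\bigr)/\bigl(1-(1-\delta)r\bigr)$ whenever $\delta<1-r$. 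By MAG one may pick $\lambda=\lambda(\delta)\to1$ (e.g.\ with $\varkappa_\Theta(\lambda(\delta))\le1-\sqrt{\delta}$, using $\sup\eta_\Theta=1$) for which this lower bound tends to $1$; thus $|f|\ge1-\varepsilon_1(\delta)$ on $\{|\Theta|\le\lambda(\delta)\}$ with $\varepsilon_1(\delta)\to0$, and since $|\Theta|\ge\lambda(\delta)$ off this set, $\inf_\D(|f|+|\Theta|)>0$, so $[f]$ is invertible by the corona theorem.

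The remaining — and main — difficulty is the sharp estimate $\|[f]^{-1}\|_{\HTH}\le1+\varepsilon$ for $\delta$ small: the raw corona bound only produces the universal corona constant, not a constant tending to $1$. The idea I would pursue is to exploit that $f$ is not merely bounded below but \emph{uniformly close to $1$ in modulus} on the large sublevel set $\{|\Theta|\le\lambda(\delta)\}$, hence (again by Schwarz--Pick) close to a unimodular constant on each pseudohyperbolic ball around a point of $\ZTH$; one then builds the inverse by a Wolff-type $\overline\partial$-construction starting from the natural guess $1/f$ on that sublevel set, carefully tracking the constants, or, equivalently in the operator picture, shows that $M^\Theta_f$ differs from an invertible operator of inverse-norm $\le1$ by a perturbation controlled by $\varepsilon_1(\delta)$ and the geometry of $\{|\Theta|\le\lambda(\delta)\}$. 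It is here that the \emph{full} strength of MAG (not a single sublevel set) is used: any zero of $f$ must lie outside $\{|\Theta|\le\lambda(\delta)\}$, hence pseudohyperbolically far from $\ZTH$, hence — by MAG — at a point where $|\Theta|$ is close to $1$, so by the computation of the first part it contributes at most a factor close to $1$ to $\|[f]^{-1}\|$; this is precisely what excludes the $b_{z_n}$-type obstructions. Verifying that all such contributions assemble to the constant $1+\varepsilon$ (rather than merely a bounded constant) is, I expect, the technical heart of the argument.
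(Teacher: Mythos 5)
Your proof of the implication SIP $\Rightarrow$ MAG is correct, and it takes a genuinely different route from the paper's. The paper argues directly and elementarily: applying the SIP to $f=\varphi_{z_0}$ gives $\varphi_{z_0}g+\Theta h=1$ with $\gamma:=\|g\|_\infty\le 1+\eps$, and then composing $\gamma^{-1}\varphi_{z_0}g=\gamma^{-1}(1-h\Theta)$ with the automorphism $\varphi_{\gamma^{-1}}$ produces a function of modulus at most $1$, vanishing on $\ZTH$, divisible by $\Theta$, and of modulus exactly $\gamma^{-1}$ at $z_0$; hence $|\Theta(z_0)|\ge (1+\eps)^{-1}$ whenever $\rho(z_0,\ZTH)\ge 1-\delta$. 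Your model-space route delivers essentially the same key inequality, $\|[b_{z_0}]^{-1}\|_{\HTH}\ge 1/|\Theta(z_0)|$: the identities you state check out, namely $(M^\Theta_{b_{z_0}})^*k^\Theta_{z_0}=-\overline{\Theta(z_0)}\,T_{\overline{b_{z_0}}}(\Theta k_{z_0})$, $\|T_{\overline{b_{z_0}}}(\Theta k_{z_0})\|^2=(1-|\Theta(z_0)|^2)\|k_{z_0}\|^2$ and $\|k^\Theta_{z_0}\|^2=(1-|\Theta(z_0)|^2)\|k_{z_0}\|^2$, so $\|(M^\Theta_{b_{z_0}})^*k^\Theta_{z_0}\|=|\Theta(z_0)|\,\|k^\Theta_{z_0}\|$. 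The cost of your route is that it invokes Sarason/commutant lifting to identify $\|[f]\|_{\HTH}$ with the norm of the truncated Toeplitz operator (and the multiplicativity $A^\Theta_fA^\Theta_g=A^\Theta_{fg}$ to transfer inverses), whereas the paper's trick is self-contained; also note that under the SIP hypothesis the invertibility of $[b_{z_n}]$ is part of the conclusion, so your corona-theorem detour there is unnecessary.

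The converse direction MAG $\Rightarrow$ SIP, however, has a genuine gap. Your reduction is exactly the paper's: normalize the representative so $\|f\|_\infty\le 1$, and use Schwarz--Pick near $\ZTH$ together with MAG to get $\inf_{z\in\D}\max(|f(z)|,|\Theta(z)|)\ge 1-\eps_1$ with $\eps_1=\eps_1(\delta)\to 0$. But the step you defer as ``the technical heart'' --- obtaining $\|[f]^{-1}\|_{\HTH}\le 1+\eps$ rather than a universal corona constant --- is precisely the substance of this implication, and your sketch (a Wolff-type $\overline\partial$ construction ``carefully tracking the constants'', or an unspecified operator perturbation) does not supply it; nothing in the outline indicates how the constants would actually be driven to $1$. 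The missing ingredient is P.~Jones' quantitative corona theorem (Theorem~\ref{jones}, \cite{J}): if $\max(|f|,|\Theta|)\ge 1-\delta$ on $\D$ with $\delta=\delta(\eps)$ small, there exist $g,h\in H^\infty$ with $fg+\Theta h\equiv 1$ and $\|g\|_\infty\le 1+\eps$, whence $\|[f]^{-1}\|_{\HTH}\le\|g\|_\infty\le 1+\eps$ immediately. Without citing this (or reproving it, which is a substantial piece of work equivalent to Jones' argument), your proof of this direction is incomplete.
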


An immediate corollary of this Theorem is that, since the MAG property is clearly stable under finite products of the functions $\Theta$, so is the SIP.
%, which was not apparent---if a  function $f$ gives rise to 
% invertible classes $[f]_1$ and $[f]_2$
% in the quotient algebras $H^\infty/ \Theta_1 H^\infty$ and $H^\infty/ \Theta_2 H^\infty$ % respectively,
% then $[f]$ is not necessarily invertible in $H^\infty/ \Theta_1\Theta_2 H^\infty$ when $\Theta_1$
% and $\Theta_2$ are not mutually prime.

% The SIP for Blaschke products is also stable under small hyperbolic pertubations of the zeros, which is not
% the case for the WEP. 

% On the other hand, as is the case for WEP, a Blaschke product whose zero sequence is a subset
% of the zeroes of a SIP Blaschke product is not necessarily itself SIP.

% It should be noted that not only are there WEP functions which are not SIP, there are some % % interpolating Blaschke products
% which are not SIP, see Example \ref{notstar}. To have a quantitative sufficient condition for SIP, one needs, roughly speaking, a vanishing version
% of the condition which characterizes interpolating sequences (together with separatedness). 

% \subsection{Frostman shifts and size of sublevel sets.}

% Write  $\varphi_a(z):=\frac{a-z}{1-z\bar a }$ for the usual M\"obius involutions of the disc. 
Given an arc $I$ on the unit circle of length $|I|$ we denote the Carleson square based at $I$ by $Q(I) = \{z \in \D : z/|z| \in I , 1- |z| \leq |I| \}$. A positive measure $\mu$ in the unit disc is called a Carleson measure if there exists a constant $C=C(\mu)>0$ such that $\mu(Q(I)) \leq C |I|$ for any Carleson square $Q (I)$. 
Recall that a Blaschke product $\Theta$ with zeros $\{a_j\}$ 
%$B(z):= \prod_{j} b_{a_j}(z)$, with $b_a(z):= \frac{\bar a}{|a|}\varphi_a(z)$,
is called \emph{Carleson--Newman}, denoted $\Theta \in (CN)$, if and only if $\sum_j (1-|a_j|) \delta_{a_j}$ is a Carleson measure,
or equivalently, if 
\[
\sup_k \sum_j (1-\rho(a_j,a_k)^2) = \sup_k \sum_j \frac{(1-|a_j|^2)(1-|a_k|^2)}{|1-a_j \bar a_k|^2} < \infty.
\]

For $a\in \D$ let $\varphi_a$ be the automorphism of $\D$ given by $\varphi_a (w) = (a-w)/(1- \overline{a}w)$. 
There is a link between inner functions $\Theta$ satisfying the SIP and the behavior of $\Theta$ under Frostman shifts $\varphi_a \circ \Theta $, $a \in \D$. 
%, that is to say, the
%transformations $\Theta \mapsto \varphi_a \circ \Theta$,  when $a\in \D$.
It follows from (the proof of) \cite[Proposition 2]{Bo} that whenever $0<|a|< \sup \{ \eta_\Theta (t) : 0<t<1 \}$, the composition $\varphi_a \circ \Theta$
is a Carleson--Newman Blaschke product. By Theorem \ref{motiv}, any SIP inner function then belongs to the class
\begin{equation}
\label{classM}
\mathcal M := \left\{ \Theta \mbox{ inner }: 
%\forall a \in \D \setminus \{0\}, 
\varphi_a \circ \Theta \in (CN), \text{ for any } a \in \D \setminus 
\{0\} \right\},
\end{equation}
introduced in \cite{MN}. Note that the class $\mathcal P:= (CN) \cap \mathcal M$, which is formed by Carleson--Newman Blaschke products 
all of whose Frostman shifts are again Carleson--Newman, had been studied beforehand and characterized in terms of
its zeros and behavior on the maximum ideal space of $H^\infty$, see \cite{Nc}, \cite{MN} and references therein.
% We prove that $\mathcal P \subsetneqq (SIP) \subsetneqq \mathcal M$. 
% It will be convenient here and in some other places to use the hyperbolic (or Poincar\'e) distance.
\begin{definition}
%\label{hypdist}
% For $z,w \in \D$, $R>0$,
%\begin{multline*}
%d_H (z,w) := \tanh^{-1} \rho(z,w)= \frac12 \log \frac{1+\rho(z,w)}{1-%\rho(z,w)} ;
%\\
%D_H(z,R) := \{ w\in \D : d_H(z,w)<R\}. 
%\end{multline*}
We say that a set $S \subset \D$ is \emph{narrow} if there exists $0<R<1$ such that the set $S$ contains no pseudohyperbolic disc of pseudohyperbolic radius $R$.
\end{definition}

Let $\Theta$ be an inner function. It is proved in \cite{MN} that $\Theta \in \mathcal P$ if and only if the set $\{ z\in \D: |\Theta(z)|<1-\eps\}$ is narrow for any $0 < \eps < 1$, and that 
$\Theta \in \mathcal M$ if and only if the set $\{ z\in \D: \eps<|\Theta(z)|<1-\eps\}$ is narrow for any $0 < \eps < 1/2$. It is worth mentioning that the narrowness of the level sets of an inner function is also related to the membership of its derivative to Hardy or Nevanlinna spaces (\cite{IN}). Our next result says that inner functions satisfying the SIP can also be described by a narrowness condition.

\begin{theorem}
\label{narrowchar}
Let $\Theta$ be an inner function. Then $\Theta$ satisfies the {\rm SIP} if and only if 
the set $\{ z\in \D: 0<|\Theta(z)|<1-\eps\}$ is narrow for any $0< \eps < 1$.
\end{theorem}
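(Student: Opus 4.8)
\textbf{Proof proposal for Theorem~\ref{narrowchar}.}

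The plan is to exploit Theorem~\ref{motiv}, which identifies the SIP with the MAG condition $\lim_{t\to 1}\eta_\Theta(t)=1$, and to translate this condition about $\eta_\Theta$ into a narrowness statement about the sublevel sets $\{0<|\Theta|<1-\eps\}$. The key observation is that $\eta_\Theta$ near $t=1$ only ``sees'' points $z$ that are pseudohyperbolically far from the whole zero set $\ZTH$, whereas narrowness of $\{0<|\Theta(z)|<1-\eps\}$ is exactly the assertion that such points cannot host arbitrarily large pseudohyperbolic discs once we also bound $|\Theta|$ away from $1$. I would prove the two implications separately.

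For the forward direction, assume $\Theta$ has MAG. Fix $0<\eps<1$; we must produce $R<1$ so that $\{0<|\Theta(z)|<1-\eps\}$ contains no pseudohyperbolic disc of radius $R$. Pick $t_0<1$ with $\eta_\Theta(t)>1-\eps$ for all $t\ge t_0$; this means every $z$ with $\rho(z,\ZTH)\ge t_0$ satisfies $|\Theta(z)|>1-\eps$, hence $z\notin\{|\Theta|<1-\eps\}$. Therefore every point of the sublevel set $\{|\Theta(z)|<1-\eps\}$ lies within pseudohyperbolic distance $t_0$ of $\ZTH$, i.e. of a point where $\Theta$ vanishes. Now suppose for contradiction that some pseudohyperbolic disc $D_\rho(c,R)$, with $R$ to be chosen close to $1$, is contained in $\{0<|\Theta(z)|<1-\eps\}$. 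On one hand $\Theta$ has no zero in $D_\rho(c,R)$ (since $|\Theta|>0$ there); on the other hand the center $c$, belonging to the sublevel set, is within pseudohyperbolic distance $t_0$ of some zero $a$ of $\Theta$. Using the triangle-type inequality for $\rho$ (via the automorphisms $\varphi_a$), any fixed $R$ with $R>t_0$ — more precisely $R$ large enough that the pseudohyperbolic ball of radius $t_0$ about $c$ is contained in $D_\rho(c,R)$, which holds as soon as $R>t_0$ by monotonicity, but one must be slightly careful because $\rho$ does not satisfy the ordinary triangle inequality, so instead use that $\rho(c,a)\le t_0$ forces $a\in D_\rho(c,R)$ whenever $R\ge t_0$ — yields $a\in D_\rho(c,R)$, contradicting the absence of zeros. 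Hence no such disc exists with $R\ge t_0$; taking $R=t_0$ (or any value in $[t_0,1)$) proves narrowness.

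For the converse, suppose $\Theta$ does \emph{not} have MAG, so $\sup_{0<t<1}\eta_\Theta(t)=:m<1$; set $\eps:=1-m\in(0,1)$ (if the sup is $1$ but the limit is smaller, a routine variant applies, choosing $\eps$ so that $\eta_\Theta(t)\le 1-\eps$ fails to hold for $t$ near $1$ — in that case there is a sequence $t_n\to 1$ with $\eta_\Theta(t_n)\le 1-\eps$). By definition of $\eta_\Theta$, for each such $t_n$ there is a point $z_n$ with $\rho(z_n,\ZTH)\ge t_n$ and $|\Theta(z_n)|\le 1-\eps/2$, say. Since $\rho(z_n,\ZTH)\ge t_n$, the pseudohyperbolic disc $D_\rho(z_n,t_n)$ contains no zero of $\Theta$, so $|\Theta|>0$ on it. It remains to show $|\Theta|<1-\eps'$ on a disc of radius $\to 1$ about $z_n$ for some fixed $\eps'>0$; this is where a Harnack/Schwarz--Pick estimate enters: $\log(1/|\Theta|)$ is a positive harmonic function on $D_\rho(z_n,t_n)$, so by Harnack's inequality on pseudohyperbolic discs, $|\Theta(z_n)|\le 1-\eps/2$ propagates to $|\Theta(z)|\le 1-\eps''$ for all $z$ in a somewhat smaller disc $D_\rho(z_n, r_n)$ with $r_n\to 1$ as $t_n\to 1$ (with $\eps''$ depending only on $\eps$ and on the ratio $r_n/t_n$). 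Thus $D_\rho(z_n,r_n)\subset\{0<|\Theta(z)|<1-\eps''\}$ with $r_n\to 1$, so this sublevel set is not narrow, contradicting the hypothesis.

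\textbf{Main obstacle.} The delicate point is the harmonic-measure/Harnack propagation in the converse: controlling $|\Theta|$ on a full pseudohyperbolic disc of radius $r_n\uparrow 1$ from its value at one point, with a bound $1-\eps''$ where $\eps''$ stays bounded away from $0$. One must quantify how the Harnack constant for the positive harmonic function $\log(1/|\Theta|)$ on $D_\rho(z_n,t_n)$ degrades as $r_n/t_n\to 1$, and arrange $r_n$ to grow to $1$ slowly enough (e.g. $1-r_n=(1-t_n)^{1/2}$) that the constant stays controlled. In the forward direction the only subtlety is handling the non-transitivity of $\rho$ correctly, which is routine once one works with the automorphisms $\varphi_a$; and one should also double-check the boundary/degenerate behavior of $\eta_\Theta$ (it may be discontinuous, as the authors note) does not affect the choice of $t_0$, which it does not since $\eta_\Theta$ is non-decreasing.
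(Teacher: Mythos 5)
Your forward implication is correct and is essentially the paper's own argument (Proposition \ref{SIP-M}): once $\eta_\Theta(t_0)>1-\eps$, any pseudohyperbolic disc $D_\rho(c,R)$ with $R\ge t_0$ contained in $\{0<|\Theta|<1-\eps\}$ would have $\rho(c,\ZTH)<t_0\le R$, hence would contain a zero of $\Theta$, which is impossible; no triangle inequality for $\rho$ is needed. (Incidentally, since $\eta_\Theta$ is non-decreasing, $\lim_{t\to1}\eta_\Theta(t)=\sup_{0<t<1}\eta_\Theta(t)$ always, so your case distinction at the start of the converse is vacuous.)

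The converse, however, has a genuine gap, precisely at the point you flag as the ``main obstacle'', and your proposed fix cannot work. From $|\Theta(z_n)|\le 1-\eps/2$ and positivity of $h=-\log|\Theta|$ on the zero-free disc $D_\rho(z_n,t_n)$, Harnack gives only $h(w)\ge \frac{t_n-r_n}{t_n+r_n}\,h(z_n)$ for $\rho(z_n,w)\le r_n$, and this constant tends to $0$ for \emph{any} choice of $r_n\to1$ (since $t_n<1$); taking $1-r_n=(1-t_n)^{1/2}$ does not help. Worse, the statement you need is false for general positive harmonic functions: $h$ could behave like a multiple of the Poisson-kernel function $\Re\bigl((1+\varphi_{z_n}(w))/(1-\varphi_{z_n}(w))\bigr)$, which is of size $\log\frac{1}{1-\eps/2}$ at $z_n$ but uniformly small on most of $D_\rho(z_n,t_n)$, so $|\Theta|$ could a priori be arbitrarily close to $1$ on most of that disc. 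Thus your argument only yields $D_\rho(z_n,r_n)\subset\{0<|\Theta|<1-\eps''_n\}$ with $\eps''_n\to0$, which does not contradict the hypothesis, because the radius $R$ in the definition of narrowness is allowed to depend on $\eps$. Excluding this Poisson-kernel-type behaviour is the real content of the implication, and the paper does it by a different mechanism (Lemmas \ref{step} and \ref{step1}): apply narrowness at the \emph{higher} level $(1-\eps)^{1/4}$, so that around any point $z_k$ far from $\ZTH$ with $-\log|\Theta(z_k)|\ge\log\frac1{1-\eps}$ the circle $\partial D_H(z_k,R_1)$ must, by the minimum principle, contain a point where $-\log|\Theta|$ drops to at most a quarter of its value at $z_k$; a quantitative lemma on positive harmonic functions then forces another point on the same circle where $-\log|\Theta|$ has \emph{increased} by the fixed factor $1+\frac1{10}e^{-2R_1}$. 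Iterating builds a chain, staying in a zero-free region, along which $-\log|\Theta|$ grows geometrically, until one application of Harnack on a disc of fixed radius $R_1+1$ produces a hyperbolic disc inside $\{0<|\Theta|<(1-\eps)^{1/4}\}$, contradicting narrowness. Some such iterative growth argument is indispensable; a one-step Harnack propagation from a single point cannot prove the converse.
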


We next present two consequences of Theorems \ref{motiv} and \ref{narrowchar}. The first one is a dichotomy which holds for inner functions in the class $\mathcal M$ and the second one collects several relations between the classes $\mathcal{P}$, $\mathcal{M}$ and the SIP. 

\begin{corollary}
\label{SIP-equiv}
Let $\Theta$ be an inner function in the class $\mathcal M$ defined in \eqref{classM}. Then either $\sup \{\eta_\Theta (t): 0<t<1 \}=1$ (which means that $\Theta$ satisfies 
the {\rm SIP}) or $\eta_\Theta (t)=0$ for any $0<t<1.$
\end{corollary}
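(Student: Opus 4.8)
The plan is to prove the dichotomy by combining Theorem~\ref{motiv} with the characterization of the class $\mathcal M$ via narrowness of the sets $\{z : \eps < |\Theta(z)| < 1-\eps\}$. The key observation is that for $\Theta \in \mathcal M$, if the supremum $L := \sup\{\eta_\Theta(t): 0<t<1\}$ lies strictly between $0$ and $1$, we can derive a contradiction with narrowness. So suppose $0 < L < 1$. Fix any $\lambda$ with $L < \lambda < 1$. By definition of $\eta_\Theta$, for every $t \in (0,1)$ there exists a point $z_t \in \D$ with $\rho(z_t, \ZTH) \ge t$ and $|\Theta(z_t)| \le L < \lambda$; hence $|\Theta(z_t)| < 1-\eps$ for $\eps := 1-\lambda$. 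The point $z_t$ need not avoid the zeros of $\Theta$ outright, so the first technical step is to adjust it: either $z_t$ already satisfies $|\Theta(z_t)| > \eps'$ for some fixed small $\eps'$ (and we are done placing it in the ``middle'' level set), or $|\Theta(z_t)|$ is tiny, which forces $z_t$ to be pseudohyperbolically close to $\ZTH$ — but that contradicts $\rho(z_t,\ZTH) \ge t$ once $t$ is close enough to $1$, by the Schwarz--Pick estimate $|\Theta(z)| \ge$ something bounded below when $z$ is far from all zeros in a Blaschke-type bound. More cleanly: since $L < 1$, one shows that near the unit circle one cannot have $|\Theta|$ uniformly small away from $\ZTH$ unless $\eta_\Theta$ itself were forced up; I would instead argue directly that if $L>0$, then $\eta_\Theta(t_0) > 0$ for some $t_0$, so by the Gorkin--Mortini--Nikolski result $\Theta$ satisfies the WEP, and in particular $|\Theta(z)| \ge \eta_\Theta(t_0) > 0$ whenever $\rho(z,\ZTH) \ge t_0$.

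With $L > 0$ in hand, fix $t_0$ with $\eta_\Theta(t_0) =: c_0 > 0$. Now take $\eps = \min(c_0, 1-\lambda)/2$ where $\lambda \in (L,1)$, and for each $t \ge t_0$ produce $z_t$ with $\rho(z_t, \ZTH) \ge t$ and $|\Theta(z_t)| \le L$. Then automatically $\eps < c_0 \le |\Theta(z_t)| \le L < 1-\eps$, so $z_t \in S_\eps := \{z : \eps < |\Theta(z)| < 1-\eps\}$. The heart of the matter is to show $S_\eps$ is \emph{not} narrow, i.e.\ contains pseudohyperbolic discs of arbitrarily large radius. The idea: since $\rho(z_t,\ZTH) \to 1$ as $t \to 1$, the point $z_t$ sits in a large pseudohyperbolic disc $D(z_t, r_t)$ with $r_t \to 1$ on which $|\Theta|$ is bounded below by $c_0$ (being far from zeros) and, using a normal-families / Harnack-type compactness argument on the subharmonic function $\log|\Theta|$ together with the maximum principle, also bounded above away from $1$ on a slightly smaller but still large disc — because if $|\Theta|$ were forced close to $1$ on most of $D(z_t,r_t)$ while equal to something $\le L$ at the center, the mean-value / sub-mean-value inequality for $\log|\Theta|$ would be violated for $r_t$ large enough. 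This yields discs of radius $\to 1$ inside $S_\eps$, so $S_\eps$ is not narrow, contradicting the characterization $\Theta \in \mathcal M \iff S_\eps$ narrow for all $\eps$. Hence $L = 1$, and by Theorem~\ref{motiv} (together with $\eta_\Theta$ non-decreasing so that $\lim_{t\to 1}\eta_\Theta(t) = L$), $\Theta$ has MAG and satisfies the SIP.

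The main obstacle is the middle step: upgrading ``$|\Theta(z_t)| \le L$ at a far-from-zeros point $z_t$'' to ``a \emph{large pseudohyperbolic disc} on which $|\Theta|$ avoids a neighborhood of $1$.'' The lower bound on $|\Theta|$ over the disc is easy from WEP and the monotonicity of $\rho(\cdot,\ZTH)$ under the Schwarz lemma; the upper bound is the delicate part. One sound route is: compose with $\varphi_a \circ \Theta$ for a suitable $a$ with $|a| = L$ — then $\varphi_a \circ \Theta$ vanishes at $z_t$ (or nearly so, choosing $a = \Theta(z_t)$ if $|\Theta(z_t)|$ is exactly achieving values near $L$), and if this composition were a Carleson--Newman Blaschke product (which it is, since $\Theta \in \mathcal M$ and $a \ne 0$), its zero set $\{b_j\}$ is a finite union of interpolating sequences, hence separated, hence each zero $b_j$ is the center of a disc $D(b_j, r)$ of fixed radius $r$ on which $|\varphi_a \circ \Theta| \le$ some fixed constant $< 1$; translating back, $|\Theta|$ stays in a fixed annulus around $|a| = L$ on that disc — but this only gives fixed radius, not radius $\to 1$. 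To get discs of arbitrarily large radius one must instead exploit that $z_t$ is \emph{very} far from $\ZTH$ and use a compactness argument: passing to a subsequential limit of the translates $\Theta \circ \varphi_{z_t}$ (a normal family of inner or constant functions), the limit $g$ satisfies $|g(0)| \le L < 1$, so either $g$ is a non-constant inner function with a zero, which by WEP-type reasoning cannot happen since the preimages of $0$ under $\Theta\circ\varphi_{z_t}$ stay at pseudohyperbolic distance $\to 1$ from $0$ — or $g$ is constant of modulus $\le L$, forcing $|\Theta|$ uniformly $\le L + o(1) < 1-\eps$ on discs of radius $\to 1$ around $z_t$. That last alternative is exactly what we want, and ruling out the first alternative is where the full strength of $\rho(z_t,\ZTH)\to 1$ (equivalently $L<1$ together with the definition of $\eta_\Theta$) is used. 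I expect this compactness/normal-families argument, carefully set up, to be the cleanest way to close the proof.
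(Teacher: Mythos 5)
Your reduction to ``$L:=\sup_t\eta_\Theta(t)\in(0,1)$ contradicts $\Theta\in\mathcal M$'' is the right target, and your lower bound (points of a large hyperbolic disc centred at $z_t$ stay far from $\mathcal Z(\Theta)$, hence $|\Theta|\ge c_0=\eta_\Theta(t_0)$ there) is exactly the elementary half of the paper's argument. But the step you yourself flag as delicate --- producing hyperbolic discs of radius tending to infinity around $z_t$ on which $|\Theta|<1-\eps$ for a \emph{fixed} $\eps$ --- is where the proposal breaks. In your normal-families dichotomy the non-constant case is not ruled out: a locally uniform limit $g$ of $\Theta\circ\varphi_{z_t}$ need not be inner, and since the zeros of $\Theta\circ\varphi_{z_t}$ recede to hyperbolic infinity while $|g(0)|\ge c_0>0$, Hurwitz's theorem forces $g$ to be \emph{zero-free}; so ``non-constant limit with a zero'' never occurs, and the contradiction you invoke (preimages of $0$ staying far from $0$) has nothing to bite on. A non-constant zero-free limit with $c_0\le|g(0)|\le L$ is perfectly consistent with your hypotheses --- think of $g$ behaving like an atomic singular inner function, whose intermediate level set $\{\eps<|g|<1-\eps\}$ is a narrow horocyclic strip. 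In that scenario $|\Theta|$ climbs back toward $1$ within bounded hyperbolic distance of $z_t$, the set $\{\eps<|\Theta|<1-\eps\}$ contains only discs of bounded radius near the $z_t$, and no contradiction with $\Theta\in\mathcal M$ is obtained. Harnack alone gives an upper bound $1-\delta_R$ with $\delta_R\to0$ as $R\to\infty$, which is not enough because the narrowness characterization of $\mathcal M$ requires a fixed $\eps$.

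The missing ingredient is precisely the hard direction of Theorem~\ref{narrowchar} (built on Lemmas~\ref{step} and~\ref{step1}, the iterated Harnack argument that excludes the ``$|\Theta|$ rebounds to $1$ nearby'' scenario). The paper's proof of the corollary uses it as a black box: if $\Theta$ fails the SIP, then for some $\eps$ the set $\{0<|\Theta|<1-\eps\}$ contains hyperbolic discs $D_H(z_R,R)$ of arbitrarily large radius; this supplies the upper bound $|\Theta|<1-\eps$ on large discs for free, and the centre-half of each such disc lies at hyperbolic distance at least $\tanh^{-1}(t_0)$ from $\ZTH$, so $|\Theta|\ge\eta_\Theta(t_0)$ there, giving arbitrarily large discs inside $\{\eps_1<|\Theta|<1-\eps_1\}$ with $\eps_1=\min(\eta_\Theta(t_0),\eps)$ and contradicting $\Theta\in\mathcal M$. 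Since Theorem~\ref{narrowchar} precedes the corollary, you are entitled to quote it; replacing your compactness argument by that citation repairs the proof and essentially reproduces the paper's. As written, however, the proposal has a genuine gap that cannot be closed by the normal-families reasoning alone.
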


% Let $SIP$ denote the set of inner functions satisfying the SIP. 

\begin{corollary}
\label{WEP-cor}
\phantom{A}
\begin{itemize}
\item[{\rm (a)}] $\mathcal{P} \subset  \{ \text{inner functions satisfying the {\rm SIP}} \} \subset \mathcal{M}$.
\item[{\rm (b)}] Let $\Theta$ be an inner function satisfying the WEP. Then $\Theta \in \mathcal{M}$ if and only if $\Theta $ satisfies the {\rm SIP}.
\item[{\rm (c)}] The class $\mathcal{P}$ consists of Carleson--Newman Blaschke products satisfying the {\rm SIP}.
\end{itemize}
\end{corollary}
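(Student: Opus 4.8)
The plan is to derive Corollary~\ref{WEP-cor} directly from Theorems~\ref{motiv} and~\ref{narrowchar}, together with the characterizations of $\mathcal P$ and $\mathcal M$ recalled from \cite{MN}, the Borichev observation about Frostman shifts, and the elementary monotonicity of $\eta_\Theta$.

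For part (a), the inclusion $\{\text{SIP}\}\subset\mathcal M$ is exactly the content of the paragraph following \eqref{classM}: if $\Theta$ satisfies the SIP then by Theorem~\ref{motiv} it has MAG, so $\sup\{\eta_\Theta(t):0<t<1\}=1$, and then for every $a\in\D\setminus\{0\}$ we have $|a|<1=\sup_t\eta_\Theta(t)$, whence $\varphi_a\circ\Theta\in(CN)$ by (the proof of) \cite[Proposition 2]{Bo}; thus $\Theta\in\mathcal M$. For the inclusion $\mathcal P\subset\{\text{SIP}\}$, let $\Theta\in\mathcal P=(CN)\cap\mathcal M$. By the $\mathcal P$-characterization of \cite{MN}, the set $\{z\in\D:|\Theta(z)|<1-\eps\}$ is narrow for every $0<\eps<1$; since $\{z:0<|\Theta(z)|<1-\eps\}\subset\{z:|\Theta(z)|<1-\eps\}$, the smaller set is a fortiori narrow for every $0<\eps<1$, and Theorem~\ref{narrowchar} gives the SIP. (Equivalently, one can argue via MAG: the $\mathcal P$-characterization forces $\eta_\Theta(t)\to1$ as $t\to1$.)

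For part (b), suppose $\Theta$ satisfies the WEP. If $\Theta$ satisfies the SIP then $\Theta\in\mathcal M$ by part (a), so one direction is free (and needs no WEP). Conversely assume $\Theta\in\mathcal M$ and satisfies the WEP. The WEP gives $\eta_\Theta(t)>0$ for every $t>0$ (Gorkin--Mortini--Nikolski), so $\sup\{\eta_\Theta(t):0<t<1\}>0$; hence Corollary~\ref{SIP-equiv} applies to $\Theta\in\mathcal M$ and the alternative ``$\eta_\Theta(t)=0$ for all $t$'' is excluded, leaving $\sup\{\eta_\Theta(t):0<t<1\}=1$, i.e.\ (via Theorem~\ref{motiv} and the monotonicity of $\eta_\Theta$, which forces the supremum to be a limit as $t\to1$) the MAG property, hence the SIP. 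One should be slightly careful that MAG requires \emph{both} $\lim_{t\to1}\eta_\Theta(t)=1$ and $\sup_t\eta_\Theta(t)=1$; but since $\eta_\Theta$ is non-decreasing these coincide, so $\sup_t\eta_\Theta(t)=1$ already yields MAG.

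For part (c): by definition $\mathcal P\subset(CN)$, and by part (a) $\mathcal P\subset\{\text{SIP}\}$, so every member of $\mathcal P$ is a Carleson--Newman Blaschke product satisfying the SIP. For the reverse inclusion, let $\Theta$ be a Carleson--Newman Blaschke product satisfying the SIP; then $\Theta\in(CN)$ by hypothesis and $\Theta\in\mathcal M$ by part (a), so $\Theta\in(CN)\cap\mathcal M=\mathcal P$. The only genuinely non-formal input here is part (a), which itself rests on Theorems~\ref{motiv} and~\ref{narrowchar} and the cited results from \cite{Bo,MN}; everything else is bookkeeping with the definitions and the monotonicity of $\eta_\Theta$. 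The main point to get right, and the place where a careless argument could slip, is ensuring in part (b) that the WEP hypothesis is used precisely where it is needed, namely to rule out the degenerate branch of the dichotomy in Corollary~\ref{SIP-equiv}.
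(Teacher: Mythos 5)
Your proposal is correct and takes essentially the same route as the paper: part (a) from the level-set/narrowness and Frostman-shift characterizations of $\mathcal P$ and $\mathcal M$ together with Theorems~\ref{motiv} and~\ref{narrowchar} (your MAG-plus-\cite{Bo} argument for $\{\mathrm{SIP}\}\subset\mathcal M$ is exactly the one recorded in the paper's introduction), part (b) by using the WEP only to exclude the degenerate branch $\eta_\Theta\equiv 0$ of Corollary~\ref{SIP-equiv}, and part (c) by combining (a) with the definition $\mathcal P=(CN)\cap\mathcal M$. No gaps.
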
 
 
% It follows from the definitions of $\mathcal P$ and $\mathcal M$ that $(SIP)\cap (CN)=\mathcal P$. 
% But one can deduce from the above theorem that if $\sup \eta_\Theta >0$ and $\Theta \in \mathcal M$,
% then $\Theta \in (SIP)$, which means that there is a dichotomy among the functions of $\mathcal M$:
% if $\Theta \in \mathcal M$ then either $\Theta$ satisfies the SIP, and $\sup_{0<t<1}\eta_\Theta(t)=1$
% or $\eta_\Theta \equiv 0$.
Let $\Theta$ be a Blaschke product 
with zero sequence $\{a_j\}$. Consider 
\begin{equation}
\label{vcarl}
S_{t}(z):=\sum_{j:\rho(z,a_j)\ge t} ( 1-\rho(z,a_j)), \quad z \in \D 
\end{equation}
Note that $S_0 (0) = \sum (1- |a_j|)$ and $\sup \{S_0 (z): z \in \D \} < \infty$ if and only if $\Theta$ is a Carleson--Newman Blaschke product. 
For $\theta\in \R$ and $h, \delta >0$, we consider 
\[Q(\theta,h,\delta):=\{re^{i\theta'}: 0<1-r< \delta h,\ |\theta'-\theta|< h\}.
\]
Note that $Q(\theta, h, 1)$ is the usual Carleson square. 
%Given $0<\delta < 1$ and a Carleson square $Q(I)$ based at an arc $I \subset \partial \D$ consider $Q(I)(\delta)  = \{z \in Q(I): 1- |z| < \delta |I|\}$. 
Our next result describes the class $\mathcal P$ in terms of the behavior of $S_t (z)$, the distribution of its zeros and also in terms of the SIP. 

\begin{theorem}\label{NSC-P} 
Let $\Theta$ be a Blaschke product with zero set $\{a_k\}$. Then the following conditions are equivalent:
\begin{itemize}
\item[{\rm (a)}] $\Theta\in\mathcal P$.
\item[{\rm (b)}] The function $S_t$ defined in \eqref{vcarl} satisfies 
    $$
    \lim_{t\to 1}\sup_{z \in \D} S_t (z)=0.
    $$
\item[{\rm (c)}] The zeros $\{a_k \}$ of $\Theta$ satisfy  
     $$
     \lim_{\delta\to 0}\sup_{\theta\in \R,\,h>0} \frac{1}{h} \sum_{a_k \in Q(\theta,h,\delta)} (1- |a_k|) = 0.
     $$
 %   where the supremum is taken over all Carleson squares $Q(I) \subset \D$.
\item[{\rm (d)}] Any inner function which divides $\Theta$ in $H^\infty$ has the {\rm SIP}.
\end{itemize}
\end{theorem}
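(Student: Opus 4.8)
The plan is to prove the chain of equivalences $(a)\Leftrightarrow(b)\Leftrightarrow(c)$ first, as these are purely geometric/measure-theoretic statements about the zeros, and then to establish $(a)\Leftrightarrow(d)$ using the earlier structural results, principally Theorem~\ref{motiv} and Corollary~\ref{WEP-cor}. For $(b)\Leftrightarrow(c)$ I would argue directly: after applying the automorphism $\varphi_z$ to move $z$ to the origin, the quantity $S_t(z)=\sum_{j:\rho(z,a_j)\ge t}(1-\rho(z,a_j))$ becomes $\sum_{|b_j|\ge t}(1-|b_j|)$ where $\{b_j\}=\{\varphi_z(a_j)\}$, and the condition $\sup_z S_t(z)\to 0$ says that uniformly over all pseudohyperbolic disc translates, the ``mass'' of the zero sequence lying in the annulus $t\le |w|<1$ is small. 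Translating this Carleson-type mass condition from pseudohyperbolic annuli centered at arbitrary points to the Euclidean boxes $Q(\theta,h,\delta)$ is a standard covering/comparison argument: a box $Q(\theta,h,\delta)$ with $\delta$ small is comparable to the portion of a pseudohyperbolic disc (centered at a point at height $\sim h$ over $\theta$) lying pseudohyperbolically far from that center, and conversely the far part of any pseudohyperbolic disc can be covered by boundedly many such boxes. This gives $\delta\to 0$ in (c) matching $t\to 1$ in (b), with $\frac1h\sum_{a_k\in Q(\theta,h,\delta)}(1-|a_k|)$ playing the role of normalized mass.

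For $(a)\Leftrightarrow(b)$ I would use the characterization of $\mathcal P$ recalled from \cite{MN}: $\Theta\in\mathcal P$ iff the sublevel set $\{|\Theta|<1-\eps\}$ is narrow for every $\eps\in(0,1)$. The link between narrowness of sublevel sets and smallness of the tail sum $S_t$ is that $|\Theta(z)|$ is large exactly when $S_t(z)$ is small for $t$ close to $1$: indeed $\log(1/|\Theta(z)|)=\sum_j\log(1/\rho(z,a_j))$, the zeros with $\rho(z,a_j)$ bounded away from $1$ each contribute a bounded amount (so there can be only finitely many if $|\Theta(z)|$ is to be close to $1$, i.e.\ narrowness of $\{|\Theta|<1-\eps\}$ controls that part), while the zeros with $\rho(z,a_j)$ near $1$ contribute $\sum(1-\rho(z,a_j))(1+o(1))=S_t(z)(1+o(1))$. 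So $\sup_z S_t(z)\to 0$ as $t\to1$ is equivalent to: for every $\eps$, the set where the ``near'' zeros contribute more than $\eps$ is empty for $t$ close enough to $1$, which combined with the narrowness coming from $\mathcal M\supset\mathcal P$ (only boundedly many ``far'' zeros near any point, uniformly) yields precisely that all sublevel sets are narrow. I would phrase this carefully as a two-sided estimate between $-\log|\Theta(z)|$ and $S_t(z)$ valid in the narrow regime.

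The implication $(d)\Rightarrow(a)$ is immediate since $\Theta$ divides itself, so if all divisors have the SIP then $\Theta$ has the SIP, hence $\Theta\in\mathcal M$ by Corollary~\ref{WEP-cor}(a); but one needs $\Theta\in\mathcal P=(CN)\cap\mathcal M$, so the Carleson--Newman condition must be extracted—this should follow from applying the SIP to enough divisors, or more directly from (b) via the already-established equivalence once we know $\sup_z S_0(z)<\infty$, which itself follows because a divisor failing to be Carleson--Newman would fail the SIP. The main obstacle is the implication $(a)\Rightarrow(d)$: given $\Theta\in\mathcal P$ one must show \emph{every} inner divisor $\Theta'$ has the SIP, i.e.\ by Theorem~\ref{motiv} that $\eta_{\Theta'}(t)\to 1$ as $t\to1$. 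Here I would use that any inner divisor of a Blaschke product is a Blaschke product whose zero set is a subset of $\{a_k\}$, so $S_t^{\Theta'}(z)\le S_t^{\Theta}(z)$ pointwise; by $(b)$ this tail is uniformly small for $t$ near $1$, and the two-sided estimate from the $(a)\Leftrightarrow(b)$ step then forces $-\log|\Theta'(z)|$ to be small whenever $\rho(z,\mathcal Z(\Theta'))$ is close to $1$—except we must also control the \emph{finitely many} ``far'' zeros of $\Theta'$ near $z$. The subtlety is that these could in principle be numerous for $\Theta'$ even though $\Theta\in\mathcal M$ controls them for $\Theta$; but since $\mathcal Z(\Theta')\subset\mathcal Z(\Theta)$, the count of $\Theta'$-zeros at pseudohyperbolic distance $\le t_0<1$ from $z$ is bounded by the corresponding count for $\Theta$, which is uniformly bounded by membership in $\mathcal M$. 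Choosing $t$ close enough to $1$ (beyond $t_0$) ensures $\rho(z,\mathcal Z(\Theta'))\ge t$ leaves no such far zeros at all, so only the uniformly small tail survives, giving $\eta_{\Theta'}(t)\to 1$. I expect the careful bookkeeping of these two regimes—uniform over $z$ and over all divisors $\Theta'$ simultaneously—to be where the real work lies.
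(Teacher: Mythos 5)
Two of your steps have genuine gaps. First, in (a) $\Rightarrow$ (b) the proposed two-sided estimate between $-\log|\Theta(z)|$ and $S_t(z)$ breaks down exactly at the points that matter: when $z$ is at or near a zero of $\Theta$, $-\log|\Theta(z)|$ is large (or infinite) and gives no upper bound on the tail $S_t(z)$, and narrowness of the sublevel sets only guarantees a point $w$ with $|\Theta(w)|\ge 1-\eps$ at hyperbolic distance at most $R(\eps)$ from $z$; transferring the bound $S_{t'}(w)\le\log\frac1{1-\eps}$ back to $z$ via Harnack/Schwarz--Pick costs a factor of order $e^{2R(\eps)}$, and since $R(\eps)$ may blow up as $\eps\to0$, this does not yield $\sup_z S_t(z)\to 0$. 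The paper avoids this loss by applying Lemma~\ref{step1} to the subproduct $\tilde\Theta$ of the zeros lying at hyperbolic distance at least $R_0(R_1)$ from the given point $z$ (narrowness for $\Theta$ is inherited by $\tilde\Theta$ because $|\Theta|\le|\tilde\Theta|$), which gives $|\tilde\Theta(z)|\ge 1-\eps$ at the point $z$ itself, with no transfer constant; that is the idea your sketch is missing.

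Second, and more seriously, (d) $\Rightarrow$ (a) is not ``immediate plus extraction of CN''. Your key assertion that a divisor failing to be Carleson--Newman would fail the SIP is false as stated: Proposition~\ref{wepsipnotCN} (Treil's example) exhibits a non-Carleson--Newman Blaschke product with the SIP, so sub-products of $\Theta$ can be SIP without being CN, and non-CN of $\Theta$ does not by itself produce a bad divisor. What is actually required is: if the uniform tail condition (b) fails, one must \emph{construct} a divisor of $\Theta$ without the SIP. This is the substantial part of the paper's argument, done via Lemma~\ref{bigfar} and an inductive construction of disjoint hyperbolic discs $D_H(w_k,k)$ around points with $\tilde S_{x_k}(w_k)>\delta$, removing the zeros of $\Theta$ inside these discs, and checking by a two-sided bookkeeping (choosing the radii $R_k$ so that earlier discs contribute at most $\frac14\delta$ at $w_k$ while the retained zeros contribute at least $\frac34\delta$) that the remaining product $\tilde\Theta$ satisfies $d_H(w_k,\mathcal Z(\tilde\Theta))\ge k$ while $\log(1/|\tilde\Theta(w_k)|)\gtrsim\delta$, so $\eta_{\tilde\Theta}$ does not tend to $1$. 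Nothing in your proposal plays this role. Your (b) $\Leftrightarrow$ (c) and (a)/(b) $\Rightarrow$ (d) sketches are essentially the paper's route and are fine in outline: divisors have smaller tails, and when $\rho(z,\mathcal Z(\Theta'))\ge t$ there are simply no zeros of $\Theta'$ within distance $t$, so only the tail contributes and the worry about ``far zeros near $z$'' evaporates.
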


An important subclass of $\mathcal P$ is that of the \emph{thin} Blaschke products, which are the Blaschke products $B$ whose zeros $\{a_k\}$ satisfy 
the relation $\lim_{k\to\infty} (1-|a_k|^2) |B'(a_k)|=1$, or equivalently, its zeros $\{ a_k\}$ satisfy the following thinness condition 
$$
\lim_{k\to\infty} \sum_{j: j\neq k} \frac{(1-|a_j|^2)(1-|a_k|^2)}{|1-a_j \bar a_k|^2} =0.
$$ 
The notion of thin sequences goes back to K.Hoffman \cite{H}, see also a paper \cite{KL} by A.Kerr-Lawson and a survey \cite{M} by R.Mortini.
These sequences were considered by C.Sundberg and T.Wolff in \cite{SW} in relation to interpolation problems in $H^\infty \cap VMOA$ and in the more general algebras ${QA}_B$. 
Further references include a 1982 paper \cite{V} by A.Volberg, and more recent \cite{GPW} and \cite{GMPW}. A sequence $\{a_j\}$ of points in $\D$ is called \emph{super-separated} if $\lim_{N\to\infty} \inf\{ \rho(a_j,a_k): j\neq k, j,k \ge N\} = 1$,
a necessary condition for being thin, but far from sufficient.
It is easy to prove that any thin Blaschke product has the MAG and hence the SIP. We will prove that any SIP Blaschke product
with a super-separated zero sequence must be thin. 
% ; and if $B_0$ has super-separated zeros and is a divisor of a Blaschke product $B$,
% then the Blaschke product $B/B_0$ remains SIP. 
However we will provide examples of Carleson--Newman Blaschke products which do not have the SIP and of inner functions having the SIP which are not Carleson--Newman Blaschke products. 

%As examples, thin Blaschke products and Blaschke products with zeros
% such that their distance to the circle decreases exponentially satisfy the SIP.

% \subsection{Divisors of SIP functions.}

Finally, we turn to the question of which inner functions can be divisors of functions satisfying the SIP. For instance,  a singular inner function, being zero-free, obviously never can satisfy the SIP, 
but in some cases one can multiply it by a Blaschke product and obtain a SIP function.
The analogous question for divisors of WEP functions 
(sometimes called ``WEP-able'') has been considered in \cite{Bo} and \cite{BNT}.

We obtain a result which underlines the proximity between the notions of SIP and WEP.
Recall that a compact subset  $E\subset \partial \D$ is said to be of \emph{finite entropy} (or being a Beurling--Carleson set) if and 
only if 
$$
\int_{\partial \D} \log (  \mbox{dist}(\zeta, E)^{-1}) |d\zeta| < \infty . 
$$ 
Note that compact sets of finite entropy must have Lebesgue measure zero. Given a positive Borel measure $\mu$ on $\partial \D$ which is singular with respect to Lebesgue measure, let 
\begin{equation}
\label{singdef}
S_\mu(z)= \exp \left( - \int_{\partial \D} \frac{\xi + z}{ \xi -z} d \mu (\xi) \right), \qquad z \in \mathbb D , 
\end{equation}
be the corresponding singular inner function. We then have a situation quite parallel to that of WEP-ability, see \cite{BNT}. 
 
%  For a measure 
% $\mu$ supported on $\partial \D$, $P[\mu]$ will denote the Poisson integral
% of $\mu$. The $1$-dimensional Lebesgue measure of a set $A\subset \partial \D$
% is denoted $|A|$. For a dyadic arc 
% $I:=\{ e^{i\theta} : k2^{-m} \le \frac{\theta}{2 \pi} < (k+1)2^{-m}\}$,
% let
% $z_I:= (1- 2^{-m-1}) \exp(2^{-m-1}(2k+1))$.

\begin{theorem}
\label{sipentropy}
Let $E$ be a closed subset of the unit circle $\partial \D$. Then the following conditions are equivalent:
\begin{itemize}
\item[{\rm (a)}] $E$ has  finite entropy.
\item[{\rm (b)}] For any positive Borel singular measure $\mu$ supported on $E$, the corresponding singular inner function $S_\mu$ is a divisor
of an inner function having the {\rm SIP}.
\end{itemize}
\end{theorem}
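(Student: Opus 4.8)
The plan is to pass through Theorem \ref{motiv} and work with the MAG property instead of the SIP, so that the assertion becomes: $E$ has finite entropy if and only if every singular inner function $S_\mu$ with $\operatorname{supp}\mu\subseteq E$ divides an inner function $\Theta$ with $\lim_{t\to1}\eta_\Theta(t)=1$. A zero-free function cannot have MAG, so such a $\Theta$ must carry a nontrivial Blaschke factor; it is enough to look for $\Theta=BS_\mu$ with $B$ a Blaschke product, and then $\mathcal Z(\Theta)=\mathcal Z(B)$. The reduction I would use next is elementary: since $|B|,|S_\mu|\le1$ on $\D$, the inequality $|B(z)S_\mu(z)|\ge1-\eps$ forces both $|B(z)|\ge1-\eps$ and $|S_\mu(z)|\ge1-\eps$; hence $BS_\mu$ has MAG if and only if (i) $B$ itself has MAG, and (ii) for each $\eps\in(0,1)$ the sublevel set $\Omega_\eps:=\{z\in\D:|S_\mu(z)|<1-\eps\}$ is contained in $\{z:\rho(z,\mathcal Z(B))<t(\eps)\}$ for some $t(\eps)<1$. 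So everything comes down to producing, respectively obstructing, a Blaschke sequence that is simultaneously sparse enough at each scale to satisfy (i) and dense enough near $\operatorname{supp}\mu$ to satisfy (ii).

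For the implication (a)$\Rightarrow$(b), write $\{I_n\}$ for the complementary arcs of $E$, so that finite entropy is the condition $\sum_n|I_n|\log(e/|I_n|)<\infty$. Using $-\log|S_\mu(z)|=\int_{\partial\D}\frac{1-|z|^2}{|\xi-z|^2}\,d\mu(\xi)$ together with the fact that $\mu$ charges none of the $I_n$, I would first describe the sets $\Omega_\eps$: they cluster around $\operatorname{supp}\mu\subseteq E$, reach $\partial\D$ only on the Lebesgue-null set where the boundary values of $S_\mu$ are not unimodular, and over each arc $I_n$ they occupy, up to constants, a tent of bounded height together with parabolic fans at the endpoints of $I_n$. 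The key covering statement is that, for fixed $\eps$, the set $\Omega_\eps$ can be covered by pseudohyperbolic discs of radius $1-\delta$ centred at the points of a set $W_{\eps,\delta}$ with $\sum_{w\in W_{\eps,\delta}}(1-|w|)\le C\sqrt\delta\,\big(1+\sum_n|I_n|\log(e/|I_n|)\big)$; the gain of a power of $\delta$ reflects that a sublevel set of a singular measure is cheap to cover by \emph{large} pseudohyperbolic discs. Choosing $\eps_j\downarrow0$ and then $\delta_j\downarrow0$ fast enough, setting $\mathcal Z(B):=\bigcup_jW_{\eps_j,\delta_j}$ and letting $B$ be the corresponding Blaschke product, the summability $\sum_{\mathcal Z(B)}(1-|w|)<\infty$ follows from, and uses in an essential way, the finite entropy of $E$; the sum can in addition be made as small as we wish, and a suitable estimate of $|B|$ off its zeros, using that the zeros are spread across scales, shows $B$ has MAG. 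Since $\Omega_\eps\subseteq\Omega_{\eps_j}\subseteq\{\rho(\cdot,\mathcal Z(B))<1-\delta_j\}$ whenever $\eps_j\le\eps$, property (ii) holds as well, so $BS_\mu$ has MAG, i.e.\ the SIP, and $S_\mu$ divides it.

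For (b)$\Rightarrow$(a) I would argue by contraposition. If $E$ is not of finite entropy, then $\sum_n|I_n|\log(e/|I_n|)=\infty$, and, in the spirit of the corresponding WEP result in \cite{BNT}, I would construct a singular measure $\mu$ with $\operatorname{supp}\mu\subseteq E$ spread out along $E$ at the scales of the arcs $I_n$ in such a way that any covering of the sublevel sets $\Omega_\eps$ by pseudohyperbolic discs of radius $<1$ inherits the divergence of the entropy sum. Concretely, one shows that for this $\mu$ every Blaschke sequence $\{a_k\}$ satisfying $\Omega_\eps\subseteq\{\rho(\cdot,\{a_k\})<t(\eps)<1\}$ for all $\eps$ must already contain, at the scale of each $I_n$, a family of points near $E$ whose Blaschke sums add up over $n$ to a positive multiple of $\sum_n|I_n|\log(e/|I_n|)=\infty$, so that $\sum_k(1-|a_k|)=\infty$; hence no Blaschke product $B$ makes $BS_\mu$ have MAG, and by the reduction above $S_\mu$ divides no inner function with the SIP.

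The main obstacle is the ``if'' direction, and precisely the tension between (i) and (ii): the economical (large-disc) coverings of $\Omega_\eps$ use zeros at coarse scales near $\operatorname{supp}\mu$, exactly the zeros that endanger the MAG property of $B$ itself, so the construction must balance the two while keeping the total Blaschke sum finite and comparable to $\sum_n|I_n|\log(e/|I_n|)$. Pinning down the geometry of the sublevel sets $\Omega_\eps$, proving the covering lemma with its gain of a power of $\delta$, and then verifying that the resulting $B$ has MAG is where the real work lies; the ``only if'' direction should then be a comparatively direct counting argument once the right spread-out measure on the infinite-entropy set has been constructed, paralleling \cite{BNT}.
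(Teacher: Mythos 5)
Your reduction of the SIP to MAG and the splitting of ``$BS_\mu$ has MAG'' into (i) $B$ has MAG and (ii) $\Omega_\eps=\{|S_\mu|<1-\eps\}$ lies in a pseudohyperbolic neighborhood of $\mathcal Z(B)$ of radius $t(\eps)<1$ is correct, and your (b)$\Rightarrow$(a) sketch is essentially the paper's argument: one invokes the lemma of \cite{BNT} producing, for an infinite-entropy $E$, a singular $\mu$ whose superlevel set $\{P[\mu]\ge 1\}$ has infinite ``Blaschke area'' $\int \frac{dA(z)}{1-|z|}$, while divisibility of a SIP (indeed MAG) function $BS_\mu$ forces this set into $\{\rho(\cdot,\mathcal Z(B))\le \varkappa_\Theta(e^{-1})\}$, whose Blaschke area is finite by the Blaschke condition. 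That direction, though only sketched, follows the paper's route.

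The genuine gap is in (a)$\Rightarrow$(b), and you locate it yourself: both key steps are asserted rather than proved, and neither is routine. First, the covering statement as written cannot hold with a constant independent of $\eps$: since $|S_\mu|<1$ throughout $\D$, the sets $\Omega_\eps$ increase to all of $\D$ as $\eps\to 0$, and no family of pseudohyperbolic discs of radius $1-\delta$ with finite (let alone $O(\sqrt\delta)$) Blaschke sum of centers can cover $\D$; so the lemma must be formulated and proved with an $\eps$- and $\mu$-dependent constant, which is where the finite-entropy hypothesis has to enter quantitatively. Second, and more seriously, MAG of the Blaschke product $B$ built from the covering centers does not follow from ``the zeros are spread across scales'' or from smallness of the per-generation Blaschke sums: a generation with total mass $2^{-j}$ can still place a dense row of zeros along an interval at a single small height, producing points far (in the pseudohyperbolic metric) from $\mathcal Z(B)$ where $|B|$ stays bounded away from $1$ --- exactly the mechanism of Proposition~\ref{notstar} --- so MAG requires control of the local, scale-by-scale distribution of the centers, not just their total mass. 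The paper sidesteps both difficulties by an explicit dyadic construction: $B_1$ has zeros at the centers of all dyadic arcs $I$ with $2I\cap E\neq\emptyset$ (Blaschke by finite entropy), $B_2$ has zeros at centers of dyadic arcs $L\subset J$ with $|L|\ge |J|^2$ for each Whitney arc $J$ of $\partial\D\setminus E$ (Blaschke because $\sum_J|J|\log|J|^{-1}<\infty$), and then a pointwise Claim shows that any $z$ with $\rho(z,\mathcal Z(B_1B_2))$ close to $1$ lies in some $Q(J)$ with $1-|z|\le\eps|J|^2$, where $\log|B_1S_\mu|^{-1}\lesssim\eps$ and $\log|B_2|^{-1}\lesssim\eps\log\eps^{-1}$. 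Until you prove a covering lemma of the corrected form and, crucially, a criterion guaranteeing MAG for the resulting zero set, your forward direction remains a plan rather than a proof.
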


Although we have some information on the relations between the WEP and the SIP, the following question remains open.

\begin{question} Is it true that any function satisfying the {\rm SIP} is a divisor of a function satisfying the {\rm WEP}\,?
\end{question}

The paper is organized as follows. Theorem \ref{motiv} is established in Section \ref{equiv}. Section \ref{level} is devoted to the proof of Theorem \ref{narrowchar} and its Corollaries. Theorem \ref{NSC-P} is proved in Section \ref{classP}. Section \ref{divsip} is devoted to the proof of Theorem \ref{sipentropy}.
Finally Section \ref{examples} is devoted to discussing the relations between thin, Carleson--Newman Blaschke products, inner functions satisfying the WEP, and inner functions satisfying the SIP. Several relevant examples are also provided. 

It is a pleasure to thank Raymond Mortini for drawing our attention to the results in \cite{GM}.

\section{Equivalence between Sharp Invertibility and Maximal Growth}
\label{equiv}

The proof of Theorem \ref{motiv} uses the following beautiful quantitative version of the Corona Theorem proved by P.Jones.

\begin{theorem}\cite[Theorem 1]{J}.
\label{jones}
Let $N \ge 2$. For any $\eps>0$, there exists $\delta>0$ such that if $f_1, \dots, f_N \in H^\infty$,
$\|f_j\|_\infty \le 1$, $1\le j \le N$, and $\max_{1\le j \le N} |f_j(z)| \ge 1-\delta$
for all $z\in \D$, then there exist $g_1, \dots, g_N \in H^\infty$ such that
\[
\sum_{j=1}^N f_j(z) g_j(z) = 1, \qquad z\in \D,
\]
and $\max_{1\le j \le N} \|g_j\|_\infty \le 1+\eps$.
\end{theorem}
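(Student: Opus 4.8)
The plan is to deduce this sharp form of the corona theorem from the \emph{Toeplitz corona theorem} with its sharp constant, which for the Hardy space is available because the Szeg\H{o} kernel $(1-\bar w z)^{-1}$ is a complete Pick kernel. First I record the only use of the hypothesis: since $\|f_j\|_\infty\le1$ and $\max_{1\le j\le N}|f_j(z)|\ge1-\delta$ for every $z\in\D$, we have the pointwise lower bound $\sum_{j=1}^N|f_j(z)|^2\ge(1-\delta)^2$ on all of $\D$. Writing $c:=1-\delta$, the task becomes to produce $g_1,\dots,g_N\in H^\infty$ with $\sum_j f_jg_j=1$ together with a \emph{vector} bound on $(g_j)_j$ that is sharp as $c\to1$.

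The Toeplitz corona theorem states that, for $f_1,\dots,f_N\in H^\infty$, the existence of $g_1,\dots,g_N\in H^\infty$ with $\sum_jf_jg_j=1$ and $\bigl\|\sum_j|g_j|^2\bigr\|_\infty\le c^{-2}$ is equivalent, with no loss in the constant, to the operator inequality $\sum_jM_{f_j}M_{f_j}^*\ge c^2 I$ on $H^2$; this equivalence is the commutant lifting theorem. Tested on the reproducing kernels $k_w$ (using $M_{f_j}^*k_w=\overline{f_j(w)}\,k_w$) the operator inequality gives $\sum_j|f_j(w)|^2\ge c^2$ for all $w\in\D$, and conversely this pointwise bound forces the operator inequality because the Szeg\H{o} kernel is a complete Pick kernel, again with the same constant. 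Granting this, the pointwise bound from the previous paragraph yields $(g_j)_j$ with $\bigl\|\sum_j|g_j|^2\bigr\|_\infty\le(1-\delta)^{-2}$. Finally, for each fixed $j$ and every $z$ one has $|g_j(z)|^2\le\sum_k|g_k(z)|^2$, so $\max_{1\le j\le N}\|g_j\|_\infty\le\bigl\|\sum_k|g_k|^2\bigr\|_\infty^{1/2}\le(1-\delta)^{-1}$. Thus, given $\eps>0$, choosing $\delta>0$ with $(1-\delta)^{-1}\le1+\eps$ completes the proof; note that this $\delta$ depends only on $\eps$ and not on $N$, which is slightly stronger than the stated conclusion.

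The classical route, and presumably the one behind the cited proof, is through the $\bar\partial$-equation: set $P=\sum_k|f_k|^2$ and $\psi_j=\bar f_j/P$, a smooth (non-analytic) solution of $\sum_jf_j\psi_j=1$ with $\|\psi_j\|_\infty\le c^{-2}$, then seek analytic $g_j=\psi_j-\sum_kf_kb_{jk}$ with $b_{jk}=-b_{kj}$, which forces $\bar\partial b_{jk}=\psi_j\bar\partial\psi_k-\psi_k\bar\partial\psi_j=\overline{(f_jf_k'-f_kf_j')}\,P^{-2}$ and leaves $\sum_jf_jg_j=1$ intact by antisymmetry. The $L^\infty$ control of $b_{jk}$ is exactly the content of Jones' explicit solution operator for $\bar\partial$ with Carleson data, the Carleson estimate being furnished by $H^\infty\subset BMOA$ (so that $|f_k'(z)|^2(1-|z|^2)\,dA$ is a Carleson measure) together with the lower bound $P\ge c^2$. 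The main obstacle along this route is the \emph{sharpness} of the constant: the data $f_jf_k'-f_kf_j'$ need not be small as $c\to1$, and a term-by-term estimate $\|g_j\|_\infty\le\|\psi_j\|_\infty+\sum_k\|b_{jk}\|_\infty$ only produces a bound $C(N,\delta)$ that stays bounded away from $1$. Recovering the sharp $1+\eps$ requires exploiting cancellation between $\psi_j$ and its correction rather than bounding each piece separately, which is precisely the phenomenon that the commutant lifting argument captures automatically, and is why I would prefer the first route.
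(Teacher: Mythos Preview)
First, note that the paper does not prove this theorem; it is quoted from \cite{J} and used as a tool in the proof of Theorem~\ref{motiv}. So there is no in-paper proof to compare against beyond Jones' original $\bar\partial$ argument, which your second paragraph sketches accurately without completing.

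Your primary route, via the Toeplitz corona theorem, has a fatal gap. The step ``this pointwise bound forces the operator inequality because the Szeg\H{o} kernel is a complete Pick kernel, again with the same constant'' is false. Testing $\sum_j M_{f_j}M_{f_j}^*\ge c^2I$ on reproducing kernels gives the pointwise bound $\sum_j|f_j(w)|^2\ge c^2$, but the converse does not follow from the complete Pick property: positivity of the kernel $(z,w)\mapsto\bigl(\sum_jf_j(z)\overline{f_j(w)}-c^2\bigr)k(z,w)$ is not determined by its diagonal, and the complete Pick property does not collapse the operator condition to a scalar pointwise one. Indeed, if your implication held, then combined with Arveson's theorem it would yield, for \emph{every} $0<c<1$, solutions with $\bigl\|\sum_j|g_j|^2\bigr\|_\infty\le c^{-2}$ whenever $\inf_{\D}\sum_j|f_j|^2=c^2$; this is the Cauchy--Schwarz lower bound and is well known to be unattainable in general for small $c$. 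Your argument never invokes the hypothesis that $c=1-\delta$ is close to $1$, so it would prove this false general statement. The difficulty you correctly identify in the $\bar\partial$ route---that the naive bound on $g_j=\psi_j-\sum_kf_kb_{jk}$ is not sharp---is therefore not bypassed by your Toeplitz corona approach; it is the same obstruction in a different guise.
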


\begin{proof*}{\it Proof of Theorem \ref{motiv}.} 
First we show that  MAG is sufficient for sharp invertibility. Let $f \in H^\infty$ with $\|[f]\|_{\HTH} = 1$ and $\inf_{\ZTH}|f| \ge 1- \eta$. Then for any $\delta>0$ we can find a representative $f_1$
of the class $[f]$ with $\|f_1\|_\infty \le 1+\delta$ and $\inf_{\ZTH}|f_1|\ge 1-\eta$.  
Taking $f_2:= (1+\delta)^{-1} f_1$, we have 
$\|f_2\|_\infty \le 1$ and 
\[
 \inf_{\ZTH} |g| \ge (1+\delta)^{-1} (1-\eta)
 \]
for any $g\in [f_2]$. Since $[f]^{-1} = (1+\delta)[f_2]^{-1}$, we may restrict ourselves to the case of $f \in H^\infty$, $\|f \|_\infty \leq 1$ such that $\inf_{\ZTH}|f|\ge 1-\eta$.

Now we prove that for any $ 0 < \eps_1 < 1$, there exists $0 < \eta_1 < 1$ such that  $\|f\|_{\infty}\le 1$ and
$ \inf_{\ZTH} |f| \ge 1-\eta_1$ imply that 
$$
\inf_{z \in \D} \max(|\Theta (z)|,|f (z)|) \ge 1-\eps_1 . 
$$ 
Indeed, let $0 < \delta_1 < 1$ such that $ \eta_\Theta (\delta_1) \ge 1-\eps_1$.
This means that $|\Theta(z)|\ge 1-\eps_1$ if $\rho(z, \ZTH ) \ge \delta_1$.
%the hypothesis on $\Theta$ implies that there exists $\delta_1 >0$ such that $|\Theta| \ge 1-\eps_1$ whenever $\rho(z, ).
On the other hand, by the Schwarz--Pick Lemma, when $\rho(z, \ZTH ) \le \delta_1$, we have 
\[
|f(z)| \ge \frac{1-\eta_1 - \delta_1}{1-(1-\eta_1)\delta_1} \ge 1-\eps_1,
\]
provided that we pick $0 < \eta_1 \le (1-\delta_1)(1+\delta_1(1-\eps_1))^{-1} \eps_1$.

Since $\max \{ |\Theta (z)|, |f(z)| \} \geq 1 - \varepsilon_1$ for any $z \in \D$, Theorem~\ref{jones} %of P.Jones 
provides functions $g, h \in H^\infty$ such that $fg+\Theta h\equiv 1$. Then we have $\|[f]^{-1}\|_{\HTH} \le \|g\|_\infty\le 1+\eps$ and it suffices to choose $\eps_1:=\delta$ in Theorem \ref{jones} and then $\eta=\eta_1$ depending on $\eps_1$,
as explained above.

Assume now that $\Theta$ enjoys the SIP. Let $\eps>0$ and pick the corresponding $\delta>0$ given by Definition \ref{defsip}. Let $z_0\in \D$ be such that
$\rho(z_0, \ZTH)\ge 1-\delta$. %whenever $\Theta(\lambda)=0$. 
As before, for any $a\in \D$, let $\varphi_a$ be the involutive 
automorphism of the disc exchanging $0$ and $a$.  By the hypothesis applied
to $f:= \varphi_{z_0}$, there exist $g, h \in H^\infty$ such that $1<\gamma:=\|g\|_\infty \le 1+\eps$
and $\varphi_{z_0}(z) g(z) + \Theta(z) h(z) =1$ for all $z\in \D$. 

Then, let $g_1= \gamma^{-1} g$, and $g_2:= \varphi_{z_0} g_1= \gamma^{-1}(1- h \Theta)$, 
so that $\|g_2 \|_\infty = 1$,
 $g_2(z_0)=0$, and $g_2= \gamma^{-1} $ on $\ZTH$. Finally,
  $g_3 := \varphi_{\gamma^{-1}} \circ g_2$ is a function  vanishing on $\ZTH$
such that $\|g_3 \|_\infty \leq 1$. Also, 
\[
g_3= \frac{\gamma^{-1}-g_2}{1-\gamma^{-1}g_2} = \frac{\gamma^{-1} h \Theta}{1-\gamma^{-1}g_2} 
\]
is a multiple of $\Theta$, which is inner. Hence $|g_3| \leq | \Theta |$ on $\D$ and we deduce

% and $g_3(\lambda)= 0 $ for any $\lambda \in \Lambda$, $|g_3| \le |B|$ and in particular 
\[
|\Theta(z_0)| \ge |g_3 (z_0)| =|\varphi_{\gamma^{-1}} (0)| = \gamma^{-1} \ge \frac1{1+\eps} > 1-\eps.
\]
\end{proof*}

The proof of the direct part of Theorem \ref{motiv}, using $N\geq 1$ in P.Jones Theorem, also yields the following version of the Corona theorem for $ H^\infty/ \Theta H^\infty$, of which the sharp invertibility property is the case $N=1$.

\begin{proposition}
\label{corona}
Let $\Theta$ be an inner function having {\rm MAG} and let $N\ge 1$ be an integer. Then for any $\eps>0$, there exists $\delta>0$ such that if $f_j \in H^\infty $,
$\|[f_j ]\|_{\HTH}\le 1$, $1\le j \le N$, and $\inf_{z\in \ZTH}\max_{1\le j \le N} |f_j(z)| \ge 1-\delta$, 
 then there exist $g_1, \dots, g_N \in H^\infty$ such that
\[
\sum_{j=1}^N f_j g_j= 1
\]
in $\HTH$ and $\max_{1\le j \le N} \|g_j\|_\infty \le 1+\eps$.
\end{proposition}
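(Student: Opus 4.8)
The plan is to extract Proposition \ref{corona} directly from the proof of the sufficiency direction of Theorem \ref{motiv}, which already contains essentially all the work; the only novelty is that we keep $N$ general instead of specializing to $N=1$. First I would reduce, exactly as in the proof of Theorem \ref{motiv}, to the case where each representative satisfies $\|f_j\|_\infty \le 1$: given $f_j \in H^\infty$ with $\|[f_j]\|_{\HTH} \le 1$ we may choose a representative with $\|f_j\|_\infty \le 1+\tau$ for an arbitrarily small $\tau>0$, rescale by $(1+\tau)^{-1}$, absorb the loss into the final $\eps$, and note that the hypothesis $\inf_{z\in\ZTH}\max_j |f_j(z)| \ge 1-\delta$ degrades only to $\ge (1+\tau)^{-1}(1-\delta)$, still of the form $1 - \delta'$ with $\delta'$ small.

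Next I would run the key lemma from the proof of Theorem \ref{motiv} verbatim, but with $|f|$ replaced by $\max_{1\le j\le N}|f_j|$. Namely, for any $0<\eps_1<1$ there is $0<\eta_1<1$ such that $\|f_j\|_\infty\le 1$ for all $j$ together with $\inf_{\ZTH}\max_j|f_j| \ge 1-\eta_1$ implies
\[
\inf_{z\in\D}\ \max\bigl(|\Theta(z)|,\ \max_{1\le j\le N}|f_j(z)|\bigr) \ge 1-\eps_1.
\]
The argument is identical to the one in the paper: pick $\delta_1$ with $\eta_\Theta(\delta_1)\ge 1-\eps_1$, so $|\Theta(z)|\ge 1-\eps_1$ whenever $\rho(z,\ZTH)\ge\delta_1$; and when $\rho(z,\ZTH)\le\delta_1$, if $w\in\ZTH$ is the near point and (say) $|f_{j_0}(w)|\ge 1-\eta_1$, then the Schwarz--Pick lemma applied to $f_{j_0}$ gives $|f_{j_0}(z)| \ge \frac{1-\eta_1-\delta_1}{1-(1-\eta_1)\delta_1} \ge 1-\eps_1$ once $\eta_1$ is small enough, hence $\max_j|f_j(z)|\ge 1-\eps_1$ too.

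Then I would apply Theorem \ref{jones} with $N+1$ functions, namely $f_1,\dots,f_N,\Theta$, all of sup-norm $\le 1$ and with pointwise max $\ge 1-\eps_1$ on all of $\D$: it produces $g_1,\dots,g_N,h\in H^\infty$ with $\sum_{j=1}^N f_j g_j + \Theta h \equiv 1$ on $\D$ and $\max(\|g_1\|_\infty,\dots,\|g_N\|_\infty,\|h\|_\infty)\le 1+\eps$. Reading this identity in $\HTH$ kills the $\Theta h$ term, so $\sum_{j=1}^N f_j g_j = 1$ in $\HTH$ with $\max_j\|g_j\|_\infty\le 1+\eps$. To close the logic one chooses $\eps_1$ to be the $\eps$ fed into Theorem \ref{jones} (adjusted for the reduction constant $\tau$), then $\eta_1=\eta_1(\eps_1)$ from the lemma, and finally $\delta$ in the statement to be this $\eta_1$ (shrunk to account for $\tau$). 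There is no real obstacle here: the one point requiring a moment's care is making sure the initial representative-rescaling does not interfere with the uniform constant in Theorem \ref{jones}, which is handled simply by choosing $\tau$ and the Jones-$\eps$ small in terms of the target $\eps$ before choosing $\delta$ — exactly the bookkeeping already performed for the case $N=1$ in the proof of Theorem \ref{motiv}.
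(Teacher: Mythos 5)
Your proposal is correct and is essentially the paper's own argument: the paper proves Proposition \ref{corona} simply by remarking that the direct part of the proof of Theorem \ref{motiv} goes through verbatim with $N$ general, i.e. one applies the Schwarz--Pick step to the function $f_{j_0}$ realizing the max at the nearby zero and then invokes Theorem \ref{jones} with the $N+1$ functions $f_1,\dots,f_N,\Theta$, reading the resulting identity modulo $\Theta H^\infty$. Your normalization/rescaling bookkeeping matches the reduction already carried out there for $N=1$.
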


\section{Sharp Invertibility and Sublevel sets}
\label{level}

It will be convenient here and in some other places to use the hyperbolic (or Poincar\'e) distance given by 
\begin{equation}
% \label{hypdist}   
 d_H (z,w) := \tanh^{-1} \rho(z,w)= \frac12 \log \frac{1+\rho(z,w)}{1-\rho(z,w)} , \qquad z, w \in \D . 
\end{equation}
The hyperbolic disc centered at $z \in \D$ of hyperbolic radius $R>0$ will be denoted by $D_H(z,R) := \{ w\in \D : d_H(z,w)<R\}$. 

% \subsection{$(SIP) \subset \mathcal M$ and characterization of SIP by narrowness.}

% The goal of this subsection is to prove Theorem \ref{narrowchar}. 

% The easy direct part proves in particular that $(SIP) \subset \mathcal M$ without making use of \cite[Proposition 2]{Bo}.

We first establish the necessity in Theorem \ref{narrowchar}. 
\begin{proposition}
\label{SIP-M}${}$
Let $\Theta$ be an inner function that satisfies the SIP. Then for any $ 0 < \eps < 1$ there exists $ R>0$ such that the set $\{ w \in \D: 0<|\Theta(w)|<1-\eps\}$ contains no hyperbolic disc of hyperbolic radius $R$. 
% \begin{equation}
% \label{SIP-narrow}
% \forall \eps>0, \exists R>0 : \forall z\in \D, D_H(z,R)\not\subset \{ w: 0<|\Theta(w)|<1-\eps\}.
% \end{equation}
%%%%%% In particular, $\Theta\in \mathcal{M}$. 
\end{proposition}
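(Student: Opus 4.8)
The plan is to prove the contrapositive: assuming the set $\{w\in\D:0<|\Theta(w)|<1-\eps\}$ contains hyperbolic discs of arbitrarily large radius, we produce, for each such disc, a test function violating sharp invertibility, thereby contradicting the SIP. Concretely, fix $\eps>0$ and suppose for every $R>0$ there is a point $w_R\in\D$ with $0<|\Theta(w_R)|<1-\eps$ and $D_H(w_R,R)\subset\{0<|\Theta|<1-\eps\}$. The natural candidate test function is $f:=\varphi_{w_R}$ (the involutive automorphism swapping $0$ and $w_R$); then $\|f\|_\infty=1$, so $\|[f]\|\le 1$, and the hope is that $\inf_{\ZTH}|f|=\inf_{\ZTH}\rho(w_R,\cdot)$ is close to $1$ while $[f]^{-1}$ has large norm, or $[f]$ is not invertible at all.

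First I would verify the lower bound $\inf_{\ZTH}|f|\ge 1-\delta$ with $\delta=\delta(R)\to 0$ as $R\to\infty$. The key point is that $|\Theta|<1-\eps$ on the whole disc $D_H(w_R,R)$ forces every zero $a$ of $\Theta$ to lie \emph{outside} $D_H(w_R,R)$ (since $\Theta(a)=0$ but $|\Theta|\ge 1-\eps>0$ on the disc — here one uses $0<|\Theta(w_R)|$, which guarantees $w_R$ is itself not a zero, and continuity/the openness of $\{|\Theta|<1-\eps\}$; actually more carefully, any zero in the disc would give $|\Theta|=0<1-\eps$, consistent, so I must instead argue that $|\Theta|\ge 1-\eps$ \emph{on} $D_H(w_R,R)$, i.e. the disc sits in $\{|\Theta|\ge 1-\eps\}\cup\{\Theta=0\}$). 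Let me restate: the hypothesis should be read as $D_H(w_R,R)$ contained in the \emph{open} set where $0<|\Theta|<1-\eps$, so no zeros lie in it; hence $\rho(w_R,\ZTH)\ge\tanh R$, giving $\inf_{\ZTH}|\varphi_{w_R}|=\rho(w_R,\ZTH)\ge\tanh R=:1-\delta(R)$ with $\delta(R)\to 0$.

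Next, invoke the SIP: if $\Theta$ had the SIP, then for the given $\eps$ there is $\delta_0>0$ so that $\|[f]\|\le 1$ and $\inf_{\ZTH}|f|\ge 1-\delta_0$ imply $[f]$ invertible with $\|[f]^{-1}\|\le 1+\eps$. Choosing $R$ large enough that $\delta(R)<\delta_0$, we get $\|[\varphi_{w_R}]^{-1}\|\le 1+\eps$, hence functions $g,h\in H^\infty$ with $\varphi_{w_R}g+\Theta h\equiv 1$ and $\|g\|_\infty\le 1+\eps$. Now I would run exactly the argument already used in the second half of the proof of Theorem~\ref{motiv}: set $\gamma:=\|g\|_\infty$ (one checks $\gamma>1$, or handles $\gamma\le 1$ trivially), $g_1:=\gamma^{-1}g$, $g_2:=\varphi_{w_R}g_1=\gamma^{-1}(1-h\Theta)$, so $\|g_2\|_\infty\le 1$, $g_2(w_R)=0$, and $g_2\equiv\gamma^{-1}$ on $\ZTH$; then $g_3:=\varphi_{\gamma^{-1}}\circ g_2=\gamma^{-1}h\Theta/(1-\gamma^{-1}g_2)$ vanishes on $\ZTH$, has $\|g_3\|_\infty\le 1$, and is divisible by the inner function $\Theta$, whence $|g_3|\le|\Theta|$ on $\D$. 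Evaluating at $w_R$ yields
\[
|\Theta(w_R)|\ge|g_3(w_R)|=|\varphi_{\gamma^{-1}}(0)|=\gamma^{-1}\ge\frac1{1+\eps}>1-\eps,
\]
contradicting $|\Theta(w_R)|<1-\eps$. This contradiction shows the SIP forces some finite $R=R(\eps)$ to work.

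The main obstacle, and the step needing the most care, is the very first one: making sure the geometric hypothesis is used in the form "$D_H(w_R,R)$ avoids $\ZTH$," which requires interpreting "$0<|\Theta(w)|<1-\eps$ on the disc" correctly and checking that this indeed gives $\rho(w_R,\ZTH)\ge\tanh R$. Everything after that is a verbatim reprise of the Theorem~\ref{motiv} computation, so the proof is short; I would present it by stating "we argue by contradiction" and then citing the relevant paragraph of the proof of Theorem~\ref{motiv} for the algebraic manipulation of $g,h$, rather than repeating it in full.
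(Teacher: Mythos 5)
Your proof is correct and is essentially the paper's argument: the paper simply invokes Theorem~\ref{motiv} (SIP $\Rightarrow$ MAG) to get $\delta$ with $\eta_\Theta(\delta)\ge 1-\eps$ and takes $R=\tanh^{-1}\delta$, observing that any hyperbolic $R$-disc either contains a zero or has $|\Theta|\ge 1-\eps$ at its center, whereas you inline the SIP $\Rightarrow$ MAG direction (the $\varphi_{w_R}$, $g_3$ computation) at the point $w_R$ instead of citing the theorem's statement. Your handling of the key geometric point is right: containment in $\{0<|\Theta|<1-\eps\}$ excludes zeros from the disc, so $\rho(w_R,\ZTH)\ge\tanh R$, and the rest goes through as you describe.
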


\begin{proof}
Fix $0< \eps <1$. Since $\Theta$ has the SIP, there exists $0 < \delta < 1$ such that $\eta_\Theta (\delta) \ge 1 - \eps$. Pick $R>0$ such that $\tanh (R) = \delta$. 
% For any  $\eps>0$, taking $R=\tanh^{-1}(\eta_\Theta^{-1}(1-\eps))$ (well defined since 
%V\Theta$ has the SIP), 
For any $z \in \D$ we have either $|\Theta(z)| \geq 1-\eps$ or
 $D_H(z,R)\cap \ZTH\neq\emptyset$.  In both cases $D_H(z,R)\not\subset \{ w \in \D: 0<|\Theta(w)|<1-\eps\}$.
\end{proof}

For the converse, we first need an auxiliary result on positive harmonic functions.

   \begin{lemma}
\label{step}
For $2\le r_1 \le r_0-1$, denote $\rho_j:= \tanh r_j$, $j=0,1$. Let
$h$ be a positive harmonic function on $D_H(0,r_0)=D (0, \rho_0)$.
If there exists $\theta_0 \in \R$
such that $h(\rho_1 e^{i\theta_0}) \le \frac14 h(0)$, then 
there exists $\theta_1 \in \R$ such that 
$h(\rho_1 e^{i\theta_1}) \ge \left(1+\frac1{10} e^{-2r_1}\right) h(0)$.
\end{lemma}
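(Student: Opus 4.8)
The plan is to combine the mean value property of $h$ over the circle $\{|z|=\rho_1\}$ with Harnack's inequality. After a rotation we may assume $\theta_0=0$, and after multiplying $h$ by a positive constant we may assume $h(0)=1$; then $h(\rho_1)\le\frac14$ while $\frac1{2\pi}\int_{-\pi}^{\pi}h(\rho_1e^{i\theta})\,d\theta=h(0)=1$. The mechanism is that, $h$ being positive and harmonic, the bound $h(\rho_1)\le\frac14$ cannot be concentrated near a single point: by Harnack, $h$ stays well below $1$ on a whole hyperbolic neighbourhood of $\rho_1$, and the intersection of that neighbourhood with $\{|z|=\rho_1\}$ has angular measure comparable to $1-\rho_1^2$; since the average of $h$ over the circle equals $1$, this forces $h$ to exceed $1$ by a definite amount somewhere else on $\{|z|=\rho_1\}$. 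The hypothesis $r_1\le r_0-1$ provides precisely the room needed to run Harnack on a ball of fixed hyperbolic radius inside the domain of $h$, while $r_1\ge2$ makes $1-\rho_1^2=(\cosh r_1)^{-2}$ genuinely close to $4e^{-2r_1}$.

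First I would carry out the Harnack localization. For any $p$ with $|p|=\rho_1$ one has $D_H(p,1)\subset D_H(0,r_1+1)\subset D_H(0,r_0)$, so $h$ is positive and harmonic on $D_H(p,1)$, and Harnack's inequality there gives $h(z)\le\frac{\tanh 1+\tanh d_H(p,z)}{\tanh 1-\tanh d_H(p,z)}\,h(p)$ for $z\in D_H(p,1)$. Taking $p=\rho_1$ and $z\in D_H(\rho_1,\tfrac14)$, and using $h(\rho_1)\le\frac14$ together with $\frac{\tanh 1+\tanh(1/4)}{\tanh 1-\tanh(1/4)}<2$, we get $h(z)<\frac12$ on $D_H(\rho_1,\tfrac14)$; in particular $h(\rho_1e^{i\theta})<\frac12$ for every $\theta$ in $E:=\{\theta\in(-\pi,\pi]:d_H(\rho_1,\rho_1e^{i\theta})\le\tfrac14\}$. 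Next, from the identity $\rho(\rho_1,\rho_1e^{i\theta})^2=\frac{4\rho_1^2\sin^2(\theta/2)}{(1-\rho_1^2)^2+4\rho_1^2\sin^2(\theta/2)}$ one finds $E=\{\theta:|\sin(\theta/2)|\le c_0(1-\rho_1^2)/\rho_1\}$ with the absolute constant $c_0=\frac{\tanh(1/4)}{2\sqrt{1-\tanh^2(1/4)}}$; since $\rho_1<1$ and $1-\rho_1^2=(\cosh r_1)^{-2}\ge\frac{4e^{-2r_1}}{(1+e^{-4})^2}$ for $r_1\ge2$, the set $E$ contains the interval $|\theta|\le 2c_0(1-\rho_1^2)/\rho_1$, so that $|E|\ge\frac95 e^{-2r_1}$.

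Finally, splitting the mean value identity over $E$ and its complement and using $h<\frac12$ on $E$ gives $\int_{(-\pi,\pi]\setminus E}h\,d\theta>2\pi-\tfrac12|E|$, hence
\[
\max_{\theta}h(\rho_1e^{i\theta})\ \ge\ \frac{1}{2\pi-|E|}\int_{(-\pi,\pi]\setminus E}h\,d\theta\ >\ 1+\frac{|E|/2}{2\pi-|E|}\ \ge\ 1+\frac{|E|}{4\pi}\ >\ 1+\frac1{10}e^{-2r_1},
\]
the maximum being attained since $h$ is continuous on the compact circle $\{|z|=\rho_1\}\subset D(0,\rho_0)$; picking $\theta_1$ at which it is attained and undoing the normalization finishes the proof. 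The argument is elementary, so I expect the only real difficulty to be the bookkeeping of constants, the target $\frac1{10}$ leaving a modest margin: it is essential to exploit that $(\cosh r_1)^{-2}$ is close to $4e^{-2r_1}$ for $r_1\ge2$ (the crude bound $(\cosh r_1)^{-2}\ge e^{-2r_1}$ would give only about $1+\frac1{25}e^{-2r_1}$, too weak), and to keep the Harnack radius small enough that the Harnack factor stays safely below $2$; with those choices the numbers close with room to spare.
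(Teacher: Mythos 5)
Your proof is correct and follows essentially the same route as the paper: use Harnack's inequality (you run it on the hyperbolic ball $D_H(\rho_1,1)$, exploiting $r_1\le r_0-1$, while the paper uses the invariant Harnack inequality in $D(0,\rho_0)$ directly) to show $h<\tfrac12 h(0)$ on an arc of the circle $|z|=\rho_1$ of angular length $\gtrsim e^{-2r_1}$, and then the mean value property over that circle forces a point where $h\ge(1+\tfrac1{10}e^{-2r_1})h(0)$. The constants you track (Harnack factor $<2$, $|E|\ge\tfrac95 e^{-2r_1}$, $|E|/4\pi>\tfrac1{10}e^{-2r_1}$) check out.
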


Note that the conclusion of the lemma does not depend on the value of $r_0$
as long as $r_0\ge r_1 +1$.

\begin{proof*}{\it Proof of Lemma \ref{step}.}

Without loss of generality, assume $\theta_0=0$. 
Let $s:=\rho_1/\rho_0$. Note that $\tanh2 \le s<1$.
By  Harnack's inequality, for $|\theta|\le \delta$,
\[
h(\rho_1 e^{i\theta}) \le h(\rho_1 ) \frac{1+\rho(s,se^{i\delta})}{1-\rho(s,se^{i\delta})}.
\]
Choosing $\delta:= \cos^{-1} \left( 1- \left(\frac{1-s^2}{4s}\right)^2 \right)$,
we can check that $\rho(s,se^{i\delta}) \le \frac{1}{3}$, and so 
$h(\rho_1 e^{i\theta}) \le 2 h(\rho_1 ) \le \frac{1}{2} h(0)$ for $|\theta|\le \delta$. 

By the mean value property of harmonic functions,
\[
h(0)= \int_{-\pi}^\pi h(\rho_1 e^{i\theta}) \frac{d\theta}{2\pi}
\le \frac{2\delta}{2\pi} \frac{1}{2} h(0) + \frac{2\pi-2\delta}{2\pi}
\sup_{|\theta|\ge \delta}  h(\rho_1 e^{i\theta}),
\]
thus
\[
\sup_{|\theta|\ge \delta}  h(\rho_1 e^{i\theta}) \ge 
\frac{1-(\delta/2\pi)}{1-(\delta/\pi)} h(0) \ge (1+(\delta/2\pi)) h(0).
\]
Using the fact that $\cos^{-1}(1-u^2)\ge \sqrt2 u$, one can check that $\delta/2\pi > e^{-2r_1} / 10$.
% \[
% \begin{split}
% \delta &\ge \frac{\sqrt 2}{4} \frac{\rho_0^2-\rho_1^2}{\rho_0 \rho_1}
% \\
% & \ge\frac{\sqrt 2}{4} \frac{\tanh^2(r_1+1)-\tanh^2 r_1}{\tanh(r_1+1) \tanh r_1} 
% \\
% &=\frac{\sqrt 2}{4} \frac{(\tanh(r_1+1)-\tanh r_1)(\tanh(r_1+1)+\tanh r_1)}{\tanh(r_1+1) \tanh % r_1} 
% \\
% &=\frac{\sqrt 2}{4} \frac{(\tanh(r_1+1)-\tanh r_1)(1+\tanh(r_1+1) \tanh r_1)(\tanh(r_1+1)+\tanh % r_1)}{(\tanh r_1+\tanh 1) \tanh r_1} 
% \\
% &=\frac{\sqrt 2}{4} \frac{(\tanh 1) (1-\tanh^2 r_1)(\tanh(r_1+1)+\tanh r_1)}{(\tanh r_1)(\tanh % % 1+\tanh r_1)} 
% \\
% &\ge  \frac{\sqrt 2}{2}\frac{\tanh 1}{\sinh (2r_1)}
% \end{split}
% \]
% so after some computation $\delta/2\pi > \frac{1}{10} e^{-2r_1}$.
\end{proof*}

We will also need the following technical auxiliary result. 

\begin{lemma}\label{step1}
Let $\Theta$ be an inner function and $0< \eps < 1$, $R_1 >2$ such that the set $\{w \in \D : 0 < |\Theta (w)| < (1-\eps)^{1/4}\}$ contains no hyperbolic disc of hyperbolic radius $R_1$. Then for $R_0$ large enough, depending only on $R_1$, we have 
% if there exists $z_0\in \D$ with 
%Then there exists $R_0>0$ such that 
\begin{equation}\label{conclusion}
|\Theta(z)| \geq 1-\eps  \text{  if  }  d_H(z, \ZTH)\ge R_0.
%\Longrightarrow\\
\end{equation}
% then there exists
% $z_1\in \D$ such that $\ D_H(z_1,R_1)\subset  D_H(z_0,R_0) \cap\{w: |\Theta(w)|<1-\eps\}$.
%\end{split}
\end{lemma}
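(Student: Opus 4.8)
The plan is to argue by contradiction. Suppose some $z_0 \in \D$ has $d_H(z_0, \ZTH) \ge R_0$ yet $|\Theta(z_0)| < 1-\eps$; composing with the automorphism $\varphi_{z_0}$ we may assume $z_0 = 0$. Write $c := -\log(1-\eps) > 0$ and $h := -\log|\Theta|$. Since $\Theta$ has no zeros in $D_H(0,R_0)$, the function $h$ is positive and harmonic there, and $h(0) > c$. A point $w \in D_H(0,R_0)$ with $h(w) > c/4$ satisfies $0 < |\Theta(w)| < e^{-c/4} = (1-\eps)^{1/4}$; hence it suffices to exhibit a hyperbolic disc of radius $R_1$ contained in $D_H(0,R_0) \cap \{h > c/4\}$, which contradicts the hypothesis.

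The engine is the following dichotomy. Let $\zeta \in D_H(0,R_0)$ with $h(\zeta) > c$, and suppose $D_H(\zeta, R_1 + 1) \subset D_H(0,R_0)$. Either $D_H(\zeta, R_1) \subset \{h > c/4\}$, and we are done by the previous paragraph, or there is $u \in D_H(\zeta, R_1)$ with $h(u) \le c/4$; in the latter case the minimum principle for harmonic functions gives a point on the hyperbolic circle $\{w : d_H(\zeta, w) = R_1\}$ where $h \le h(u) \le c/4 < \tfrac14 h(\zeta)$, and applying Lemma \ref{step} to $h \circ \varphi_\zeta$ (which is positive harmonic on $D(0, \tanh\tilde r_0)$ with $\tilde r_0 := R_0 - d_H(0,\zeta) \ge R_1+1$, so that $2 \le R_1 \le \tilde r_0 - 1$) produces $\zeta'$ with $d_H(\zeta,\zeta') = R_1$ and
\[
h(\zeta') \ge \Bigl(1 + \tfrac1{10} e^{-2R_1}\Bigr) h(\zeta) =: \lambda\, h(\zeta), \qquad \lambda = \lambda(R_1) > 1 .
\]

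Starting from $\zeta_0 = 0$ and iterating this step, we get points $\zeta_0, \zeta_1, \dots$ with $d_H(0,\zeta_k) \le kR_1$ and $h(\zeta_k) \ge \lambda^k h(\zeta_0) > \lambda^k c$ — as long as we have not yet hit the ``done'' alternative. That alternative must occur soon: by Harnack's inequality there is $\beta = \beta(R_1) \in (0,1)$ with the property that a positive harmonic function on $D_H(\zeta, R_1+1)$ is at least $\beta$ times its value at $\zeta$ on all of $D_H(\zeta, R_1)$; thus once $h(\zeta_k) > c/(4\beta)$ we necessarily have $D_H(\zeta_k, R_1) \subset \{h > c/4\}$. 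Taking $m = m(R_1)$ to be the least integer with $\lambda^m \ge 1/(4\beta)$, we get $h(\zeta_m) > \lambda^m c \ge c/(4\beta)$, so the contradiction arrives within $m$ steps. It remains to set $R_0 := (m+1)R_1 + 1$: then $d_H(0,\zeta_k) + R_1 + 1 \le (m+1)R_1+1 = R_0$ for all $k \le m$, which legitimizes every application of Lemma \ref{step} and of Harnack's inequality used above. Since $R_0$ depends only on $R_1$, the lemma follows.

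The one thing requiring care is that the growth factor $\lambda$ must stay bounded away from $1$ \emph{uniformly} along the sequence, which forces us to re-center Lemma \ref{step} at each $\zeta_k$ rather than applying it once at the origin; dually, the number of steps $m$ — and hence $R_0$ — is independent of $\eps$, which works precisely because the ratio $1/(4\beta)$ is $\eps$-free and the exponent $1/4$ in the hypothesis matches the constant $\tfrac14$ appearing in Lemma \ref{step}.
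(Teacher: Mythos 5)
Your argument is correct and is essentially the paper's own proof: iterate Lemma \ref{step} from the bad point to make $-\log|\Theta|$ grow by a fixed factor $\lambda(R_1)$ per step while moving only $R_1$ each time, then use Harnack to force a full hyperbolic $R_1$-disc into the set $\{0<|\Theta|<(1-\eps)^{1/4}\}$, contradicting the hypothesis. The only (cosmetic) difference is bookkeeping: you fix the number of steps $m$ from $\lambda$ and the Harnack constant $\beta$ first and then set $R_0=(m+1)R_1+1$, and you apply the Harnack termination test at each step, whereas the paper fixes $R_0$ first, runs $m=\lfloor R_0/(2R_1)-2\rfloor$ steps inside $D_H(z_0,R_0/2)$, and invokes Harnack only at the final point.
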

%\vskip.4cm

\begin{proof*}{\it Proof of Lemma \ref{step1}.}

Let $R_0 \ge 4(R_1 +1)$ be a large number to be specified later. We argue by contradiction. Assume that there exists a point $z_0 \in \D$ such that 
$d_H(z_0,\ZTH) \ge R_0$ and $|\Theta(z_0)|\le 1-\eps$. 
Let $H(z):= - \log |\Theta(z)|$, $z \in \D$. We claim that we can construct a finite collection of points $\{z_k\}_{k=0}^m$,
with $m= \lfloor \frac{R_0}{2R_1}-2 \rfloor$,
such that 
\begin{equation}
\label{rechyp}
d_H(z_0,z_k)\le kR_1 \mbox{ and } 
H(z_k) \ge \left(1+\frac1{10} e^{-2R_1}\right)^k \log \frac{1}{1-\eps} .
\end{equation}
We proceed by induction: \eqref{rechyp} is trivially satisfied for 
$k=0$.  Given $k\le m$, the first part of the induction statement implies in
particular that $d_H(z_0,z_k)<R_0/2$,
$D_H(z_k, R_0/2) \subset \{ w \in \D : \Theta (w) \neq 0\}$.  By the choice
of $R_1$, $D_H(z_k,R_1)\not \subset \{ w \in \D: 0<|\Theta(w)|<(1-\eps)^{1/4}\}$, 
but $\Theta$ never vanishes on that disc, so by the minimum principle
there is a point $z'_k$ such that $d_H(z_k,z'_k )=R_1$ 
and $H(z'_k)\le \frac{1}{4} \log \frac{1}{1-\eps} \le \frac{1}{4} H(z_k)$.

So we can apply Lemma \ref{step} to $h:=H \circ \varphi_{z_k}$, $r_1=R_1$, $r_0=R_0/2$. 
%where $\varphi_a(z):= \frac{a-z}{1-z\bar a}$. 
We set $z_{k+1}:=
\varphi_{z_k} (\rho_1 e^{i\theta_1})$, so that $d_H(z_k,z_{k+1})=
\tanh^{-1} \rho_1 = R_1$, and
\[
H(z_{k+1})= h(\rho_1 e^{i\theta_1}) \ge \left(1+\frac1{10} e^{-2R_1}\right) h(0)
= \left(1+\frac1{10} e^{-2R_1}\right)H(z_k),
\]
and the inductive step is completed.

Now we claim that $H(z)\ge \log \frac{1}{1-\eps}$ for any $ z \in D_H (z_m, R_1+1)$, which contradicts the assumption on $R_1$.
Indeed, applying Harnack's inequality, we have for $z\in D_H (z_m, R_1+1)$, 
\[\begin{split}
\frac{H(z)}{H(z_m)}  &\ge  \frac{\tanh(2(R_1+1)) - \tanh (R_1+1)}{\tanh(2(R_1+1)) + \tanh (R_1+1)}
%\\
% = \frac{(\tanh (R_1+1) ) (1-\tanh^2(R_1+1))}{(\tanh (R_1+1))(1+\tanh^2(R_1+1)) + 2\tanh (R_1+1)}
%\\
% = \frac{(\tanh (R_1+1) ) (1-\tanh^2(R_1+1))}{(\tanh (R_1+1))(1+\tanh^2(R_1+1)) + 2\tanh (R_1+1)}
% \\
% & =\frac{1-\tanh^2(R_1+1)}{3+\tanh^2(R_1+1) }
%  \\
% &=\frac{1}{3\cosh^2(R_1+1)+\sinh^2(R_1+1)} 
\\
&=\frac{e^{-2R_1}}{e^{2}+  e^{-2R_1}+e^{-4R_1-2}}
% \\
%+\tanh^2(R_1+1) } \ge \frac{\tanh (R_1+1)  \frac{4 e^{-2(R_1+1)}}{1+e^{-2(R_1+1)} } 
% &
\ge \frac18 e^{-2R_1}.
\end{split}
 \]

So, for $m$ sufficiently large we have $H(z) \geq - \log (1- \eps )$ for any $z \in D_H (z_m , R_1 + 1)$ as desired.  

 For the record, note that a value of $R_0$ that can be chosen is 
\begin{equation*}
    R_0 = 2R_1 \left( 3+ \frac{2R_1+ \log 8}{\log\left(1+\frac1{10} e^{-2R_1}\right)} \right) . 
 \end{equation*}
\end{proof*}

\begin{proof*}{\it Proof of Theorem \ref{narrowchar}.}

The necessity of narrowness is proved in Proposition \ref{SIP-M}. 

Conversely assume that $\{w: 0<|\Theta(w)|<1-\eps'\}$ is narrow for any $0 < \eps'<1$.
Given $0 < \eps < 1$, we thus have some $R_1\ge 2$ such that  the set $\{w \in \D: 0<|\Theta(w)|<(1-\eps)^{\frac{1}{4}}\}$ contains no hyperbolic disc of hyperbolic radius $R_1$. Then Lemma \ref{step1} provides a constant $R_0 = R_0 (R_1)$ such that $|\Theta (z)| \geq 1- \eps$ if $d_H (z, \Theta^{-1} \{0\} ) \geq R_0$. Hence $\Theta$ has the MAG and by Theorem \ref{motiv} we deduce that $\Theta $ has the SIP. 
% This of course contradicts the hypothesis on $R_1$, so it means that for $d_H(z, \Theta^{-1}\{0\})\ge R_0$,
% we must have $|\Theta(z_0)|\ge 1-\eps $, which proves the SIP for $\Theta$.
\end{proof*}

We now prove Corollary \ref{SIP-equiv}

\begin{proof*}{\it Proof of Corollary \ref{SIP-equiv}.}
Assume that %$\sup_{0<t<1}\eta_\Theta(t)>0$. Then 
there exists $ 0 < t_0 <1$ such that $\eta_\Theta (t_0)>0$. 
Suppose that $\Theta$ does not satisfy the SIP,
by Theorem \ref{narrowchar} there exists $\eps>0$ such that for any $R>0$, there exists
$z_R\in \D$ such that $D_H(z_R,R)$ is contained in $ \{ w \in \D: 0<|\Theta(w)|<1-\eps\}$.  
For any $R'>0$, consider $R=2\max(R', \tanh^{-1}(t_0))$. Then for all $w\in D_H(z_R,R')$, 
\[
d_H(w,\ZTH)\ge d_H(z_R,\ZTH)-d_H(z_R,w)
% \ge R-R'
\ge \frac{R}{2} \ge \tanh^{-1}(t_0),
\]
so that $|\Theta (w)| \ge \eta_\Theta (t_0)$. 
We deduce that 
\begin{align*}
    & D_H(z_R,R')\subset \{ w \in  \D :  \eta_\Theta (t_0) <|\Theta(w)|<1-\eps\}\
    \\
    & \subset \{ w \in \D : \eps_1<|\Theta(w)|<1-\eps_1\},
\end{align*}
where $\eps_1:= \min(\eta_\Theta (t_0),\eps)$. Hence $\Theta\not\in\mathcal M$ and the proof is completed.
\end{proof*}

We now prove Corollary \ref{WEP-cor}

\begin{proof*}{\it Proof of Corollary \ref{WEP-cor}.}
Part (a) follows from the description mentioned before, of the classes $\mathcal{P}$ and $\mathcal{M}$ in terms of the level sets given in \cite{MN} and from Theorem \ref{narrowchar}. 

We now prove part (b). Assume that $\Theta \in \mathcal{M}$ satisfies the WEP. Since $\eta_{\Theta}$ cannot vanish identically, Corollary~\ref{SIP-equiv} gives that $\Theta$ has the SIP. The converse implication follows from part (a). 

To prove part (c), note that by (a), %, due Theorem \ref{narrowchar} and the characterization of $\mathcal P$ by narrowness of the sublevel sets $\{ w \in \D : |\Theta (w)|<1-\eps\}$ (see \cite{MN}), 
any Blaschke product in $\mathcal P $ satisfies the SIP. The converse follows from \cite[Proposition 2]{Bo} which says  that if $0<|a|< \sup \{ \eta_\Theta (t) : 0<t<1 \}$, then $\varphi_a \circ \Theta$
is a Carleson--Newman Blaschke product.

% or a direct proof below
% as Proposition \ref{P-equiv}). 

% The converse inclusion is immediate from $\mathcal P = \mathcal M \cap (CN)$ and $SIP \subset \mathcal M$.
\end{proof*}

\section{Proof of Theorem \ref{NSC-P}}
\label{classP}

The following auxiliary result can be found in \cite[Theorem 2.2]{MN}, see also \cite{GM}. 
\begin{lemma}
\label{CN-narrow1}
 Let $\Theta$ be a Blaschke product with zeros $\{a_j\}$. The following conditions are equivalent:
\begin{enumerate}
 \item $\Theta$ is a Carleson--Newman Blaschke product.
 \item There exist $0 < r < 1$ and $ 0 < \eps  < 1$ such that $ D_\rho(a_k,r)\not\subset\{w \in \D: |\Theta(w)|<\eps\}$, for any $k \geq 1$.
 \end{enumerate}
\end{lemma}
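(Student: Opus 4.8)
The plan is to prove Lemma \ref{CN-narrow1} by showing each implication, where the key quantitative tool is the standard estimate relating $\log(1/|\Theta(w)|)$ to the Carleson-type sum $\sum_j (1-\rho(w,a_j)^2)$, valid when the latter is small.

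\textbf{Implication (1)$\Rightarrow$(2).} Suppose $\Theta$ is Carleson--Newman with Carleson constant $C$. The basic fact is that for a point $w$ and a Blaschke product with zeros $\{a_j\}$,
\[
\log\frac{1}{|\Theta(w)|} = \sum_j \log\frac{1}{\rho(w,a_j)} \le c_0 \sum_j (1-\rho(w,a_j)^2)
\]
whenever the right-hand side is bounded (the inequality $\log(1/x)\le c_0(1-x^2)$ holds uniformly for $x$ bounded away from $0$, and terms with $\rho(w,a_j)$ small contribute a definite amount to the sum). Since $\sum_j(1-\rho(w,a_j)^2)\le C'$ for all $w$ by the Carleson--Newman condition, we get a uniform lower bound $|\Theta(w)|\ge e^{-c_0 C'}=:\eps_0>0$. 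Then for \emph{any} $0<r<1$, and any choice $\eps<\eps_0$, the disc $D_\rho(a_k,r)$ contains the point $a_k$ where $\Theta$ vanishes — wait, that is not how one wants to phrase it. Rather, since $|\Theta(w)|\ge\eps_0$ \emph{everywhere} would contradict $\Theta(a_k)=0$; the point is that the Carleson--Newman bound does not hold pointwise near zeros. Instead one argues: fix $r$; by the Carleson--Newman condition one shows that for $w$ with $\rho(w,a_k)$ comparable to $1$ but bounded away, $\sum_{j\ne k}(1-\rho(w,a_j)^2)$ is controlled, so $|\Theta(w)|$ is bounded below on an annulus around each $a_k$, hence $D_\rho(a_k,r)\not\subset\{|\Theta|<\eps\}$ for suitable $\eps,r$. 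I would make this precise by choosing, say, $r$ close to $1$ and finding a point $w_k\in D_\rho(a_k,r)\setminus D_\rho(a_k,1/2)$ where $|\Theta(w_k)|\ge\eps$: the zero $a_k$ itself contributes $\rho(w_k,a_k)\ge 1/2$, a bounded factor, and the remaining factors are controlled by the Carleson--Newman sum evaluated near $a_k$, which after subtracting the $k$-th term stays bounded.

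\textbf{Implication (2)$\Rightarrow$(1).} This is the substantive direction and where I expect the main obstacle. The idea is contrapositive-flavored: if $\Theta$ is not Carleson--Newman, then $\sup_k\sum_j(1-\rho(a_j,a_k)^2)=\infty$, and one wants to produce, for every $r<1$ and $\eps>0$, some $k$ with $D_\rho(a_k,r)\subset\{|\Theta|<\eps\}$. Given $k$ with $\sum_{j}(1-\rho(a_j,a_k)^2)=:M_k$ very large, for any $w\in D_\rho(a_k,r)$ one has $\rho(w,a_j)\le \frac{\rho(a_k,a_j)+r}{1+r\rho(a_k,a_j)}$, so each factor $1-\rho(w,a_j)^2$ is comparable (with constants depending only on $r$) to $1-\rho(a_k,a_j)^2$ — here the key is the Möbius-invariant fact that moving the base point by a bounded pseudohyperbolic distance distorts $1-\rho(\cdot,a_j)^2$ only by a bounded factor. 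Hence $\sum_j(1-\rho(w,a_j)^2)\ge c(r) M_k$, and then a lower bound $\sum_j\log(1/\rho(w,a_j))\ge c'\sum_j(1-\rho(w,a_j)^2)$ on the relevant range (or a direct truncation argument keeping only finitely many terms to avoid convergence subtleties) gives $\log(1/|\Theta(w)|)\ge c'c(r)M_k\to\infty$, so $|\Theta(w)|<\eps$ for $k$ large. I would handle the subtlety that the lower bound $\log(1/x)\ge 1-x^2$ needs care when many $\rho(w,a_j)$ are close to $1$ by first selecting a finite subset $F$ of indices with $\sum_{j\in F}(1-\rho(a_k,a_j)^2)$ as large as desired but with each $\rho(a_k,a_j)\le 1-\tau$ for a fixed $\tau$ (possible since an infinite sum of nonnegative terms with terms $\to$ whatever can be truncated), giving a genuine product estimate. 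Since this is essentially \cite[Theorem 2.2]{MN} and also appears in \cite{GM}, I would cite those and only indicate these steps; the one point deserving explicit mention is the Möbius-distortion lemma for $1-\rho(\cdot,a)^2$ under bounded pseudohyperbolic displacement, which is what converts "$D_\rho(a_k,r)$ not in a sublevel set" into a uniform bound on the Carleson sums, and conversely.
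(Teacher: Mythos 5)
The gap is in your direction (1)$\Rightarrow$(2), which is in fact the delicate one (your (2)$\Rightarrow$(1) is the easy one). Your key step asserts that, once $\rho(w,a_k)\ge 1/2$, ``the remaining factors are controlled by the Carleson--Newman sum'', so that $|\Theta|$ is bounded below on the annulus $D_\rho(a_k,r)\setminus D_\rho(a_k,1/2)$. That is false as stated: other zeros $a_j$, $j\ne k$, may well lie in this annulus (a Carleson--Newman product is typically a union of several interpolating sequences, so zeros at pseudohyperbolic distance, say, $0.7$ from $a_k$ are perfectly possible), and at points $w$ close to such an $a_j$ the factor $\rho(w,a_j)$ is tiny while its contribution $1-\rho(w,a_j)^2\le 1$ to the Carleson sum is harmless. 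The inequality $\log(1/x)\lesssim 1-x^2$ only holds for $x$ bounded away from $0$, so the uniform bound $\sup_w\sum_j(1-\rho(w,a_j)^2)\le C$ gives no pointwise lower bound on $|\Theta|$ near the other zeros, and no lower bound valid on the whole annulus. What is true, and what you need, is an \emph{existence} statement: some point of $D_\rho(a_k,r)$ avoids a fixed pseudohyperbolic neighbourhood of \emph{every} zero. To get it you must add an argument, e.g.: the number of zeros in $D_\rho(a_k,r')$ is at most $C/(1-r'^2)$, so finitely many small pseudohyperbolic discs around them cannot cover $D_\rho(a_k,r)$, and at a point $w_k$ avoiding them $\log(1/|\Theta(w_k)|)\le N\log(1/\delta)+cC$. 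The paper's own proof (see Lemma \ref{CN-narrow}, implications (1)$\Rightarrow$(3)$\Rightarrow$(4)) sidesteps this issue more elegantly by applying Jensen's formula to $\Theta\circ\varphi_{z}$ on a circle of radius $r$: the zeros inside the disc are subtracted off automatically, the mean of $\log(1/|\Theta|)$ over the pseudohyperbolic circle is bounded by $C/r$, and hence some point on that circle satisfies $|\Theta|\ge e^{-C/r}$. Without one of these two ingredients your (1)$\Rightarrow$(2) does not go through.

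Your (2)$\Rightarrow$(1) is essentially correct and amounts to the same computation as the paper's: if $w\in D_\rho(a_k,r)$ and $|\Theta(w)|\ge\eps$, shifting the base point from $a_k$ to $w$ distorts the quantities $1-\rho(\cdot,a_j)$ by a factor controlled by $4/(1-r)$, and then $\sum_j(1-\rho(w,a_j))\le\sum_j\log(1/\rho(w,a_j))=\log(1/|\Theta(w)|)\le\log(1/\eps)$; your contrapositive phrasing is just a repackaging of this. Also, the ``subtlety'' you flag about $\log(1/x)\ge 1-x^2$ is a non-issue: $\log(1/x)\ge 1-x\ge\tfrac12(1-x^2)$ for all $x\in(0,1]$, so no truncation is needed.
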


 Recall that given $t\in (0,1)$ and a discrete sequence $\{a_j\}$ in $\D$, we denote 
\[
S_{t}(z)=\sum_{j:\rho(z,a_j)\ge t} 1-\rho(z,a_j).
\]
The proof of Theorem \ref{NSC-P} is organized in several steps. 

\begin{proposition}\label{sufficient}${}$ Let $\Theta$ be a Blaschke product with zeros $\{a_j\}$. Then $\Theta \in \mathcal{P}$ if and only if    
  \begin{equation}\label{CS}
\lim_{t\to 1}\sup_{z \in \D} S_t (z)=0 . 
\end{equation}
\end{proposition}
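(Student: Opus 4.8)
\textbf{Proof proposal for Proposition \ref{sufficient}.}

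The plan is to prove both implications by connecting the quantity $S_t(z)$ to the sublevel sets of $\Theta$, so that the narrowness criterion for the class $\mathcal P$ from \cite{MN} (namely, $\Theta\in\mathcal P$ iff $\{z:|\Theta(z)|<1-\eps\}$ is narrow for every $0<\eps<1$) can be applied. The bridge between the two is the elementary estimate relating $|\Theta(z)|$ to the Blaschke sum at $z$: for a Blaschke product with zeros $\{a_j\}$ one has, for any $t\in(0,1)$,
\[
\prod_{j:\rho(z,a_j)\ge t}\rho(z,a_j)\ \le\ |\Theta(z)|\ \le\ \prod_{j:\rho(z,a_j)\ge t}\rho(z,a_j),
\]
where of course the left product already omits the (finitely many near) factors; more precisely $|\Theta(z)|\le\prod_{j:\rho(z,a_j)\ge t}\rho(z,a_j)$ always, and $-\log\prod_{j:\rho(z,a_j)\ge t}\rho(z,a_j)$ is comparable to $S_t(z)$ when $S_t(z)$ is small, via $-\log x \asymp 1-x$ for $x$ near $1$. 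Thus $S_t(z)$ small forces $|\Theta(z)|$ close to $1$ once the contribution of the close zeros is separately controlled.

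For the direction \eqref{CS} $\Rightarrow\ \Theta\in\mathcal P$: assume $\lim_{t\to1}\sup_z S_t(z)=0$. Fix $0<\eps<1$; I want to show $\{z:|\Theta(z)|<1-\eps\}$ is narrow, i.e.\ contains no pseudohyperbolic disc of some fixed radius $R=R(\eps)$. Choose $t$ close to $1$ so that $\sup_z S_t(z)$ is tiny, and set $R:=t$. Given a point $z$, if $D_\rho(z,R)$ avoids all zeros of $\Theta$, then for every $w\in D_\rho(z,R)$ every zero $a_j$ satisfies $\rho(w,a_j)\ge$ (something close to, but I must be careful: $\rho(w,a_j)$ could still be small even if $\rho(z,a_j)\ge R$ — no, if $\rho(z,a_j)<R$ is false for all $j$, that is exactly the hypothesis that the disc avoids the zeros). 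Wait: more carefully, if $D_\rho(z,R)\cap\ZTH=\emptyset$ then for $w=z$ we have $\rho(z,a_j)\ge R=t$ for all $j$, hence $-\log|\Theta(z)|\le -\log\prod_j\rho(z,a_j)$ and since all factors are indexed by $\{j:\rho(z,a_j)\ge t\}$ this is $\asymp S_t(z)\le\sup_z S_t(z)$, which we made $<\eps'$ with $\eps'$ chosen so that this forces $|\Theta(z)|>1-\eps$. Actually I realize I only need this at the \emph{center} $z$, so narrowness follows: any disc $D_\rho(z,R)$ contained in $\{|\Theta|<1-\eps\}$ has $|\Theta(z)|<1-\eps$ yet the disc avoids $\ZTH$ (since $\Theta$ doesn't vanish on it), contradicting the bound — hence no such disc exists and the set is narrow. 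By the \cite{MN} characterization (applied for every $\eps$), $\Theta\in\mathcal P$.

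For the converse $\Theta\in\mathcal P\ \Rightarrow$ \eqref{CS}: suppose \eqref{CS} fails, so there is $\alpha>0$, a sequence $t_n\to1$, and points $z_n$ with $S_{t_n}(z_n)\ge\alpha$. I want to produce, for some fixed $\eps$, pseudohyperbolic discs of arbitrarily large radius inside $\{0<|\Theta|<1-\eps\}$, contradicting narrowness (hence $\Theta\notin\mathcal P$). The idea is a covering/pigeonhole argument: since $S_{t_n}(z_n)$ is a sum of quantities $1-\rho(z_n,a_j)$ with each term $\le 1-t_n\to0$, having total mass $\ge\alpha$ means there are many zeros $a_j$ with $\rho(z_n,a_j)\in[t_n,1)$ spread around; one then finds a subregion (a pseudohyperbolic disc of radius growing with $n$, centered at a suitable point near $z_n$) on which the accumulated $-\log|\Theta|$ is bounded below by a fixed constant $c(\alpha)>0$ while $\Theta$ does not vanish there. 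Concretely, I expect to argue that on $D_\rho(z_n,\frac{1+t_n}{2})$, or a slightly shrunk version, $|\Theta|$ stays bounded away from $1$ (using that removing one or two nearest zeros only changes the log-modulus by a controlled amount, and Harnack to propagate) while being nonzero; letting $n\to\infty$ the radius $\to1$ and narrowness is violated.

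\textbf{Main obstacle.} The delicate point is the converse direction: converting a lower bound on the \emph{sum} $S_{t_n}(z_n)$ into a lower bound on $-\log|\Theta|$ \emph{throughout a large disc} rather than at a single point. One must handle the zeros $a_j$ with $t_n\le\rho(z_n,a_j)<1$ whose presence inside the target disc would both lower $|\Theta|$ further (good) but also introduce zeros (bad, since we need the disc inside $\{|\Theta|>0\}$). The resolution is to choose the disc's radius so that it captures enough of the Blaschke mass to keep $-\log|\Theta|\gtrsim\alpha$ yet is small enough (or recentered) to exclude the zeros themselves; this is exactly the kind of annular/pigeonhole bookkeeping done in \cite[Theorem 2.2]{MN} and \cite{GM}, and I would lean on those techniques, together with Harnack's inequality for the positive harmonic function $-\log|\Theta|$ to pass from a bound at the center to a bound on a fixed fraction of the disc.
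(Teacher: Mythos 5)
Both directions of your proposal have genuine gaps. For \eqref{CS} $\Rightarrow \Theta\in\mathcal P$: you invoke the characterization of $\mathcal P$ via narrowness of $\{z:|\Theta(z)|<1-\eps\}$, but that set \emph{contains} $\ZTH$, so a pseudohyperbolic disc contained in it need not avoid the zeros; your step ``the disc avoids $\ZTH$ (since $\Theta$ doesn't vanish on it)'' is exactly where the argument breaks. What your computation actually proves is narrowness of $\{0<|\Theta|<1-\eps\}$, i.e.\ the SIP (Theorem \ref{narrowchar}), and SIP is strictly weaker than membership in $\mathcal P$: there are SIP Blaschke products which are not Carleson--Newman (Proposition \ref{wepsipnotCN}), while $\mathcal P\subset (CN)$. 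The missing half is that \eqref{CS} also forces the Carleson--Newman condition; the paper gets this by fixing $t_0$ with $\sup_{\D}S_{t_0}\le 1$ and, for $w$ with $\rho(z,w)=2t_0/(1+t_0^2)$, bounding $\sum_{\rho(z,a_j)<t_0}(1-\rho(z,a_j))$ by a constant times $S_{t_0}(w)$ using \cite[Lemma 1.4]{Ga}, and then concluding with Corollary \ref{WEP-cor}(c), which identifies $\mathcal P$ with the Carleson--Newman Blaschke products having the SIP.

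For $\Theta\in\mathcal P\Rightarrow$ \eqref{CS} you offer only a strategy, and its central claim is unsound: you cannot conclude that $|\Theta|$ stays bounded away from $1$ on $D_\rho\bigl(z_n,\tfrac{1+t_n}{2}\bigr)$, a disc whose hyperbolic radius tends to infinity. Harnack's inequality for $-\log|\Theta|$ propagates a bound only over bounded hyperbolic distances (with constants that degenerate), and the claim is false in general: the zeros producing $S_{t_n}(z_n)\ge\alpha$ lie at pseudohyperbolic distance at least $t_n$ from $z_n$, and the sublevel set $\{|\Theta|<1-\eps\}$ can remain within bounded hyperbolic distance of those zeros, nowhere near filling such a disc. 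The paper's mechanism, which is the missing idea, is different: for each fixed $z$ form the tail product $\tilde\Theta$ over the zeros with $d_H(z,a_j)\ge R_0$. Since $|\Theta|\le|\tilde\Theta|$, narrowness of $\{|\Theta|<(1-\eps)^{1/4}\}$ (one needs the full sublevel set of the $\mathcal P$-characterization here, not the zero-free set of the SIP) transfers to $\tilde\Theta$; as $d_H(z,\mathcal Z(\tilde\Theta))\ge R_0$, Lemma \ref{step1} --- whose proof is the iterated Harnack growth argument of Lemma \ref{step}, not a single application of Harnack --- yields $|\tilde\Theta(z)|\ge 1-\eps$, whence $S_t(z)\le \log\frac{1}{|\tilde\Theta(z)|}\le\log\frac{1}{1-\eps}$ for $t\ge\tanh R_0$, uniformly in $z$. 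Appealing to ``the techniques of \cite{MN} and \cite{GM}'' does not supply this tail-product device, and that is where essentially all the work in this direction lies.
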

\begin{proof} 
We start proving the necessity of condition \eqref{CS}.  Fix $\eps >0$. Since $\Theta \in \mathcal{P}$, there exists $R_1 >2$ such that the set $\{w \in \D: |\Theta(w)|<(1-\eps)^{\frac{1}{4}}\}$ contains no hyperbolic disc of hyperbolic radius $R_1$. Let $R_0$ be obtained from $R_1$ by Lemma~\ref{step1}. Fix $z \in \D$ and denote by $\tilde \Theta$ the Blaschke product whose zeros are the zeros of $\Theta$ which are at hyperbolic distance from $z$ bigger than $R_0$, that is,
$$
\tilde \Theta =\prod_{j: d_H(z_0,a_j)\ge R_0}b_{a_j} , \qquad b_{a} (z) = \frac{|a|}{a} \frac{a-z}{1- \overline{a} z}, \qquad z \in \D. 
$$
Since $|\Theta| \leq |\tilde \Theta|$ on $\D$, the set $\{w \in \D: |\tilde \Theta(w)|<(1-\eps)^{\frac{1}{4}}\}$ contains no hyperbolic disc of hyperbolic radius $R_1$. 
Then Lemma~\ref{step1} gives that $|\tilde \Theta (z)| \ge 1 - \eps$. We deduce that there exists $0 < t_0 = t_0 (R_0) < 1$, $t_0 \to 1$ as $ R_0 \to  \infty$, such that
$$
S_t (z) \lesssim - \log |\tilde \Theta (z)|  \leq  - \log (1- \eps), \quad t_0 < t < 1. 
$$
This is \eqref{CS}. 

%  We prove the converse by contradiction. Assume that $\lim_{t\to 1}\sup_{z \in \D} S_t (z)\not=0$ and pick $\eps>0$ such that for any $0<t<1$ we have   
%\[
%\sup_{z \in \D} S_t (z)>-\log(1-\eps).
%\]
 
%suppose that  $\forall z\in \D, D_H(z,R_1)\not\subset\{w: |\Theta(w)|<(1-\eps)^{\frac{1}{4}}\}$ and 
%Assume that $\Theta\in \mathcal P$. %By Proposition \ref{P-equiv}, 
%Then there exists $R_1>0$ such that the set $\{w \in \D: |\Theta(w)|<(1-\eps)^{\frac{1}{4}}\}$ does not contain any hyperbolic disc of hyperbolic radius $R_1$ (\cite{MN}). 
%% for all $z\in\D$, $D_H(z,R_1)\not\subset\{w: |\Theta(w)|<(1-\eps)^{\frac{1}{4}}\}$;
%Let $R_0$ be obtained from $R_1$ by Lemma \ref{step1}.

%Let $t_0=\tanh R_0$. Fix $z_0\in \D$ be such that $S_{t_0}(z_0)>-\log(1-\eps)$. Now denoting by $\tilde \Theta=\prod_{j: d_H(z_0,a_j)\ge %R_0}b_{a_j}$, we have
%\[
%d_H(z_0, \tilde\Theta^{-1}\{0\})\ge R_0,\ \text{ and } |\tilde \Theta(z_0)| < e^{- S_{t_0} (z_0)}< 1-\eps.
%\] 
%Since $|\Theta| \leq |\tilde\Theta|$, the choice of $R_1$ implies that the set $\{w \in \D: |\tilde \Theta(w)|<(1-\eps)^{\frac{1}{4}}\}$ contains no hyperbolic disc of radius $R_1$. Then
% Lemma \ref{step1} applied to $\tilde \Theta$ gives us $z_1\in \D$ such that
%\[
%%D_H(z_1,R_1)\subset \{w: |\tilde \Theta(w)|<1-\eps\}\subset \{w: | \Theta(w)|<1-\eps\},
%\]
% a contradiction.  ????

We now prove the converse. If $\rho(z, \ZTH) \ge 1/2 $, then $- \log |\Theta (z)| \le 2 S_t (z)$.
Therefore, $\eta_\Theta(t)\ge e^{-2\sup_{\D} S_t}$ for $1/2 < t <1$. Thus (\ref{CS}) readily implies that $\Theta$ verifies the SIP. 

Furthermore, there exists $t_0\in (0,1)$ such that $\sup_{\D} S_{t_0}\le 1$. Let   $z,w\in \D$ be such that $\rho(z,w)=
2t_0 / (1+t_0^2)$. Using \cite[Lemma 1.4]{Ga}, we have 
\[
\begin{split}
\sum_{j:\rho(z,a_j)<t_0}(1-\rho(z,a_j))&\le \sum_{j:\rho(w,a_j)\ge t_0}(1-\rho(z,a_j))\\
&\le \frac{(1+t_0)^2}{(1-t_0)^2} \sum_{j:\rho(w,a_j)\ge t_0}(1-\rho(w,a_j)).
\end{split}
\]
Finally, for all $z\in \D$,
\[
\sum_{j}(1-\rho(z,a_j))\le 1+\frac{(1+t_0)^2}{(1-t_0)^2},
\]
which implies that $\Theta$ is a Carleson--Newman Blaschke product. Then Corollary~\ref{WEP-cor} completes the proof.
\end{proof}

%\begin{theorem}\label{NSC-P}
%Let $\Theta$ be a Blaschke product with zeros $\{a_j\}$. \\
%Then $\Theta\in \mathcal P$ if and only if 
%\begin{equation*}
%\lim_{t\to 1}\sup_{z \in \D} S_t (z)=0.
%\end{equation*}
%and
%\begin{equation}\label{CNS-P2}
%\forall t\in (0,1), \exists M_t\in \N : \sup_{z \in \D} N_{t}(z)\le M_t
%\end{equation}
%are satisfied.
%
%\end{theorem}

% It may be useful to reformulate the condition in \eqref{CS} in terms closer
% to the usual definition of Carleson measures. To this end, 
Let us recall that for $\theta\in \R$ and $h, \delta >0$, we denote   
\[Q(\theta,h,\delta):=\{re^{i\theta'}: 0<1-r< \delta h,\ |\theta-\theta'|< h\}.
\]
%%%%%%%Note that $Q(\theta, h, 1)$ is the usual Carleson square. 
% It is a Vanishing Carleson measure when $\mu (Q(\theta, h, 1))=o(h)$. 
% A Blaschke product with zero set $\{a_j\}$ is Carleson-Newman when $\mu:= \sum_j (1-|a_j|) \delta_{a_j}$ is a Carleson measure.  Of 
% course such a measure can never be vanishing Carleson, and we will
% consider a different kind of uniform vanishing. 

\begin{proposition}${}$ Let $\Theta$ be a Blaschke product with zeros $\{a_k \}$. Let 
\[
\mu = \sum_j (1- |a_j|) \delta_{a_j} . 
\]
Then \eqref{CS} holds if and only if
\begin{equation}\label{box}
\lim_{\delta\to 0}\sup_{\theta\in \R, h>0} \frac{\mu(Q(\theta,h,\delta))}{h}=0
\end{equation}

\end{proposition}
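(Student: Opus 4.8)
The plan is to prove the equivalence of the two quantitative conditions by comparing, for a fixed point $z\in\D$, the sum $S_t(z)$ with the mass that $\mu$ puts in Carleson-type boxes $Q(\theta,h,\delta)$ seen from $z$. The natural device is the conformal invariance of the problem: after applying an automorphism $\varphi_z$ sending $z$ to $0$, the quantity $S_t(z)$ becomes $\sum_{j:|b_j|\ge t}(1-|b_j|)$ where $\{b_j\}=\{\varphi_z(a_j)\}$, i.e.\ the total mass near the boundary of $\D$ of the pushed-forward measure; and the pushed-forward measure is comparable, box by box, to $\mu$ restricted to a pseudohyperbolic ball around $z$. So both conditions \eqref{CS} and \eqref{box} are really saying the same thing, namely that $\mu$ carries uniformly small mass on thin Carleson rectangles, once one checks that the two ways of measuring "thin rectangle near the boundary" (pseudohyperbolic distance $\ge t$ from an interior point versus height $\le \delta h$ in a box of width $h$) are interchangeable with the dictionary $t\leftrightarrow 1-\delta$ up to fixed multiplicative constants.

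Concretely I would argue in two directions. For \eqref{box}$\Rightarrow$\eqref{CS}: fix $z\in\D$ and $t$ close to $1$. The points $a_j$ with $\rho(z,a_j)\ge t$ lie outside the pseudohyperbolic disc $D_\rho(z,t)$, which is a Euclidean disc contained in a Carleson square $Q(\theta_z,h_z,1)$ with $1-|z|\sim h_z$ (here $\theta_z=\arg z$). One covers the complement $D_\rho(z,t)^c$ inside $\D$ by a bounded (independent of $t$ and $z$) number of Carleson rectangles $Q(\theta,h,\delta)$ with $\delta\lesssim 1-t$, together with a dyadic decomposition away from the "top'' of the square; on each such rectangle, for $a_j$ in it one has $1-\rho(z,a_j)\lesssim (1-|a_j|)/(1-|z|)\cdot(\text{something})$, more precisely $1-\rho(z,a_j)^2 = (1-|z|^2)(1-|a_j|^2)/|1-\bar z a_j|^2 \lesssim (1-|a_j|)/h$ for the box of width $h\gtrsim 1-|z|$ containing $a_j$. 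Summing the contribution of each box gives $S_t(z)\lesssim \sum_{\text{boxes}} \mu(Q(\theta,h,\delta))/h$, and since the number of boxes needed is bounded and each $\delta\to 0$ as $t\to 1$, \eqref{box} yields $\sup_z S_t(z)\to 0$. For the reverse \eqref{CS}$\Rightarrow$\eqref{box}: given $Q(\theta,h,\delta)$ with $\delta$ small, pick the point $z=(1-h)e^{i\theta}$ (or rather $z$ at "Carleson depth'' $\sim h$ over the arc). Every $a_j\in Q(\theta,h,\delta)$ then satisfies $\rho(z,a_j)\ge t$ with $t=t(\delta)\to 1$, because $a_j$ is much closer to the boundary than $z$ while staying within horizontal distance $\lesssim h\sim 1-|z|$; and $1-\rho(z,a_j)\gtrsim (1-|a_j|)/h$ by the same pseudohyperbolic identity (now $|1-\bar z a_j|\lesssim h$). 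Hence $\mu(Q(\theta,h,\delta))/h \lesssim \sum_{a_j\in Q(\theta,h,\delta)}(1-\rho(z,a_j)) \le S_{t(\delta)}(z)\le \sup_\D S_{t(\delta)}$, and \eqref{CS} gives \eqref{box}.

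The main technical obstacle, and the step deserving the most care, is the uniformity of the comparison "height $\le \delta h$ in a box of width $h$'' $\longleftrightarrow$ "pseudohyperbolic distance $\ge t$ from a well-chosen interior point'', with constants independent of $h,\theta,z$. This requires a clean two-sided estimate: for $z$ at depth $\sim h$ over the arc $|\theta'-\theta|<h$ and for $w=re^{i\theta'}$ in that angular sector, $1-\rho(z,w)^2$ is comparable to $(1-|w|)/h$ as long as $1-|w|\lesssim h$, and is $\gtrsim 1$ (so $\rho(z,w)$ bounded away from $1$) as soon as $1-|w|\gtrsim h$ or $|\theta-\theta'|\gtrsim h$. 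One then has to make sure that, in the direction \eqref{box}$\Rightarrow$\eqref{CS}, the region $\{\rho(z,\cdot)\ge t\}\cap\D$ really is covered by a number of admissible rectangles that stays bounded as $t\to1$ (the natural covering uses one "central thin'' rectangle of width $\sim 1-|z|$ and height $\sim(1-t)(1-|z|)$ at the top of the Carleson square over $z$, a few adjacent ones, and then the rest of the disc which is already at bounded pseudohyperbolic distance and contributes via $\sup_\D S_{t_0}<\infty$ for some fixed $t_0$, exactly as in the proof of Proposition \ref{sufficient}). Once this dictionary and covering are set up, everything else is routine summation using the identity $1-\rho(z,w)^2=(1-|z|^2)(1-|w|^2)/|1-\bar z w|^2$ and the observation that $1-\rho \asymp 1-\rho^2$ in the relevant range; and the already-proved equivalence $\Theta\in\mathcal P \iff \eqref{CS}$ (Proposition \ref{sufficient}) closes the loop for Theorem \ref{NSC-P}.
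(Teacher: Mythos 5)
Your direction \eqref{CS}$\Rightarrow$\eqref{box} is exactly the paper's argument (take $z=(1-h)e^{i\theta}$, note that every $a_j\in Q(\theta,h,\delta)$ satisfies $\rho(z,a_j)\ge t(\delta)\to1$ and $1-|a_j|\lesssim(1-\rho(z,a_j))\,h$), and it is fine. The gap is in \eqref{box}$\Rightarrow$\eqref{CS}. The set $\{w:\rho(z,w)\ge t\}$ is \emph{not} coverable by a bounded number of thin rectangles, and your fallback claim that ``the rest of the disc is already at bounded pseudohyperbolic distance'' from $z$ is false: zeros lying in dyadic annuli $Q(\theta,2^nh_0,1)\setminus Q(\theta,2^{n-1}h_0,1)$ far from $z$ (including deep zeros, e.g.\ near the origin when $|z|\to1$) have $\rho(z,a_j)$ arbitrarily close to $1$, so they \emph{do} enter $S_t(z)$ for every $t$, and bounding their total by the fixed constant $\sup_\D S_{t_0}<\infty$ cannot show that their contribution tends to $0$ as $t\to1$, which is what \eqref{CS} demands. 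Moreover your simplified estimate $1-\rho(z,a_j)\lesssim(1-|a_j|)/h$ for $a_j$ in the annulus of width $h=2^nh_0$ discards the crucial decay: the correct two-sided bound is $1-\rho(z,a_j)\asymp 2^{-2n}(1-|a_j|)/h_0$, i.e.\ an extra factor $2^{-n}$ relative to $(1-|a_j|)/h$. Without it, each far annulus only gives a bound of the order of the Carleson constant and there are $\sim\log(1/h_0)$ annuli, so the sum is not even uniformly bounded, let alone small; even for the near annuli $n\le m$ you would get $m\cdot o(1)$ with no rate, which need not vanish.

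The paper closes exactly this hole in two steps you are missing. First, \eqref{box} at one fixed scale $\delta_0$ already implies $\mu(Q(\theta,h,1))\le h/\delta_0$ for all $\theta,h$, i.e.\ $\mu$ is a Carleson measure with a uniform constant. Second, with $t=1-2^{-2m}$ and the two-sided estimate above, the zeros counted in $S_t(z)$ from the annuli $n\le m$ are forced into the \emph{thin} boxes $Q(\theta,2^nh_0,2^{-m})$ and contribute at most $\sum_{n\le m}2^{-n}\sup_{\theta,h}\mu(Q(\theta,h,2^{-m}))/h$, while the annuli $n>m$ contribute at most $\sum_{n>m}2^{-n}\sup_{\theta,h}\mu(Q(\theta,h,1))/h\lesssim 2^{-m}/\delta_0$; the retained factors $2^{-n}$ make both sums converge and the whole expression tend to $0$ as $m\to\infty$, uniformly in $z$. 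So your overall dictionary between thin boxes and pseudohyperbolic tails is the right idea, but the far-field must be handled via the Carleson bound plus the quadratic decay across dyadic annuli, not by a bounded covering or by $\sup_\D S_{t_0}$.
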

\begin{proof}${}$
 \eqref{CS} $\Longrightarrow$ \eqref{box}: Let $\theta\in \R$, $h\in(0,1)$. We set $z=(1-h)e^{i\theta}$. Note that for any $0< \delta < 1/2$ there exists $0 < t(\delta ) < 1$, $ t(\delta ) \to 1$ as $\delta \to 0$ such that $\rho (z, a_j ) \geq t(\delta)$ for any $a_j \in Q(\theta, h ,  \delta)$. Since there exists a universal constant $C>0$ such that  $1-|a| \leq C (1- \rho(a,z))h$ for any $a\in Q(\theta, h, \delta)$, we deduce that $\mu(Q(\theta,h,\delta)) \leq C S_{t(\delta)} (z) h$ and \eqref{box} follows.
 
% For any $\delta>0$ and  $a_j\in Q(\theta,h,\delta\beta^{-1})$, we have 
% \[\alpha\frac{1-|a_j|}{h}\le 1-\rho(a_j,z)\le \beta \frac{1-|a_j]}{h}\le \delta\] where % % $\alpha,\beta>0$ are absolute constants. Thus 
% \[
%\sup_{\theta\in \R,h>0}\frac{\mu(Q(\theta,h,\delta\beta^{-1})}{h}\le \alpha^{-1} \sup_{z\in \D} %S_{1-\delta}(z).
%\]
\eqref{box} $\Longrightarrow$ \eqref{CS}: First, observe that \eqref{box} implies the existence of a fixed  $0 < \delta_0 < 1$ such that $\mu(Q(\theta,h,\delta_0)) \leq h$ for any $\theta \in \R , h>0$. Then 
\begin{equation}
\label{fixed}
\mu(Q(\theta,h,1))\le \mu\left(Q(\theta,h \delta_0^{-1},\delta _0)\right)\le h\delta_0^{-1}, 
\end{equation}
for any $\theta\in \R$ and $h>0$. In other words,  \eqref{box} implies that $\mu$ is a Carleson measure and hence $\Theta$ is a Carleson--Newman Blaschke product.

Next, we fix  $z=(1-h_0)e^{i\theta}\in \D$ and use the partition 
\[\D=\bigcup_{n\in \N} (Q_n\setminus Q_{n-1})\] where we have denoted
\[Q_{-1}=\emptyset, \ \ \ \ Q_n=Q(\theta,2^nh_0,1)\ \ \   n\in \N.\]
There exist two constants $\alpha,\beta>0$ such that 
% for all $n\in \N$ and $a_j\in Q_n\setminus Q_{n-1}$, 
$$
\alpha 2^{-2n}  \frac{1-|a_j|}{h_0} \le  1-\rho(z,a_j)\le \beta 2^{-2n}  \frac{1-|a_j|}{h_0},\quad n\in \N,\, a_j\in Q_n\setminus Q_{n-1}.
$$
Let $m\in \N$. If $1-\rho(z,a_j)\le \alpha 2^{-2m}$,  $a_j\in Q_n\setminus Q_{n-1}$, and $n\le m$, then $1-|a_j|\le  2^{2(n-m)}h_0$. We obtain
\[
\begin{split}
&S_{1-2^{-2m}}(z)
\\
&\le \beta \sum_{n=0}^m 2^{-2n}  \sum_{a_j\in  Q(\theta,2^nh_0,2^{-m})}\frac{1-|a_j|}{h_0}+ \beta \sum_{n=m+1}^{+\infty}2^{-2n}  \sum_{a_j\in  Q_n\setminus Q_{n-1}}  \frac{1-|a_j|}{h_0}\\
&\le \beta \sum_{n=0}^m 2^{-n}\frac{\mu(Q(\theta,2^nh_0,2^{-m}))}{2^nh_0}+ \beta \sum_{n=m+1}^{+\infty} 2^{-n}\frac{\mu(Q(\theta,2^nh_0,1))}{2^nh_0}\\
&\le 2\beta \sup_{\theta\in \R, h>0} \frac{\mu(Q(\theta,h,2^{-m}))}{h}+ \beta 2^{-m}\sup_{\theta\in \R,h>0} \frac{\mu(Q(\theta,h,1))}{h}.
\end{split}
\]
By (\ref{box}) and (\ref{fixed}),  we obtain
\[\lim_{m\to \infty}\sup_{z\in \D}S_{1-2^{-2m}}(z)=0. \]
\end{proof}

Next we will prove the equivalence between conditions (b) and (d) in Theorem~\ref{NSC-P}, that is, we prove that the set $\mathcal P$ is precisely the set of inner functions such that all their divisors satisfy the SIP. It will be convenient to work with the hyperbolic distance and consider the equivalent quantity
\[
\tilde S_x(z) := \sum_{a\in \ZTH\setminus D_H(z,x)}
e^{-2d_H(a,z)}, \qquad x>0,\, z \in \D.
\]
We start with an auxiliary result
\begin{lemma}
\label{bigfar}
Assume that $\sup_{z \in \D} \tilde S_x (z)> \delta$ for any $x\in (0,\infty)$. Then for any $z\in \D$ and any $A>0$, there exists $x=x(z,A)>A$
and $w\in \D$ such that $d_H(z,w)>A$ and $\tilde S_x(w)>\delta$.
\end{lemma}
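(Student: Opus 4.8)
\textbf{Plan of proof for Lemma \ref{bigfar}.}

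The idea is to exploit the hypothesis in two ways: first, the hypothesis $\sup_{w\in\D}\tilde S_x(w)>\delta$ holds for \emph{every} $x$, so in particular for $x$ as large as we wish; and second, the quantity $\tilde S_x(w)$ is built from a sum of exponentials $e^{-2d_H(a,w)}$ over zeros at hyperbolic distance at least $x$ from $w$, which means that if $\tilde S_x(w)>\delta$ then there must be zeros not merely at distance $\ge x$ but "not too far beyond" $x$: indeed, the tail of the sum coming from zeros at hyperbolic distance $\ge y$ from $w$ is controlled (the counting function of $\ZTH$ in a hyperbolic ball grows at most exponentially in the radius with the Blaschke bound, so $\sum_{d_H(a,w)\ge y}e^{-2d_H(a,w)}\to 0$ as $y\to\infty$, uniformly in $w$). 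Consequently there is a function $y\mapsto \psi(y)$, $\psi(y)\to\infty$, such that $\tilde S_x(w)>\delta$ forces the existence of a zero $a$ with $x\le d_H(a,w)\le\psi(x)$ (in fact a positive proportion, in the $\tilde S$-weighted sense, of the mass of $\tilde S_x(w)$ comes from the annulus $x\le d_H(a,w)\le\psi(x)$).

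Now fix $z\in\D$ and $A>0$. Choose $x>A$ large enough that the tail estimate guarantees: whenever $\tilde S_x(w)>\delta$, at least half of $\tilde S_x(w)$ is contributed by zeros $a$ with $d_H(a,w)\le\psi(x)$, and moreover $x-\psi(x)$... wait — rather, arrange $x$ so that $x$ itself exceeds $A+ (\text{something})$; the precise bookkeeping: pick $w$ with $\tilde S_x(w)>\delta$ (possible by hypothesis). If $d_H(z,w)>A$ we are done, taking this $w$. Otherwise $d_H(z,w)\le A$. Then every zero $a$ contributing to $\tilde S_x(w)$ satisfies $d_H(a,w)\ge x$, hence by the triangle inequality $d_H(a,z)\ge x-A>0$. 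Set $w':=a$ for such a zero $a$ chosen with $d_H(a,w)$ close to $x$ (so $d_H(a,w)\le\psi(x)$, say); then $d_H(z,w')=d_H(a,z)\ge x-A$, and if $x$ was chosen with $x-A>A$, i.e. $x>2A$, this gives $d_H(z,w')>A$. It remains to check $\tilde S_{x'}(w')>\delta$ for a suitable $x'>A$: here one uses that $w$ lies in the hyperbolic ball of radius $\psi(x)$ around $w'=a$, so the zeros that contributed to $\tilde S_x(w)$ are, seen from $w'$, at distance at least $x-\psi(x)$ and their weights $e^{-2d_H(\cdot,w')}$ are comparable (up to the fixed factor $e^{2\psi(x)}$... which is bad). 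This forces a more careful choice.

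\textbf{The main obstacle}, which the above makes visible, is the direction-of-inequality problem: moving the base point from $w$ to a faraway zero $w'=a$ can only \emph{decrease} the individual weights $e^{-2d_H(\cdot, w')}$ relative to $e^{-2d_H(\cdot,w)}$ for the other zeros, since those other zeros get pushed further away. So one cannot simply transport the lower bound $\tilde S_x(w)>\delta$ to $w'$. The fix is to iterate rather than jump in one step: since the hypothesis holds for all $x$, run the argument with a \emph{sequence} of scales. Concretely, I would prove the following bootstrap: if $\tilde S_x(w)>\delta$ and $d_H(z,w)\le A$, then choosing $x$ large one finds $w_1$ with $A<d_H(z,w_1)\le d_H(z,w)+\psi(x)$ and $\tilde S_{x_1}(w_1)>\delta/2$ for some $x_1>A$ — using that a definite fraction of the $\tilde S_x(w)$-mass comes from zeros in a bounded annulus, and that from the vantage point of one such zero $w_1$, the \emph{remaining} contributing zeros (there are many, since $\delta$ is not tiny compared to a single term) are at controlled distance and still have weights summing to at least, say, $\delta/2$, after absorbing the bounded geometric loss into the choice of the new threshold $x_1$ rather than into $\delta$. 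Then, since the statement we want is qualitative (existence of \emph{some} $x>A$ and \emph{some} $w$ with $d_H(z,w)>A$ and $\tilde S_x(w)>\delta$ — note: with the original $\delta$), one must instead run this with the original $\delta$ and the hypothesis applied at a scale $x$ so large that the bounded-annulus contribution alone already exceeds $\delta$; then re-centering at a zero in that annulus, the omitted point $w$ contributes negligibly and the retained zeros, now viewed from $w_1$, still contribute more than $\delta$ to $\tilde S_{x'}(w_1)$ for an appropriate $x'$ slightly less than $x$ but still $>A$, because re-centering within a bounded hyperbolic ball changes each weight by at most a bounded factor and changes the lower cutoff by at most a bounded amount — and the surplus built in at scale $x$ was chosen to swallow exactly these bounded losses. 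Writing out this surplus bookkeeping carefully, using the uniform Blaschke tail bound to quantify "how large $x$ must be", is the technical heart; everything else (triangle inequality to get $d_H(z,w_1)>A$, discreteness of $\ZTH$) is routine.
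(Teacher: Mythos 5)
Your proposal has a genuine gap, in fact two. First, the uniform tail estimate you invoke at the outset --- that $\sum_{d_H(a,w)\ge y}e^{-2d_H(a,w)}\to 0$ as $y\to\infty$ \emph{uniformly in} $w$ --- is exactly the negation of the hypothesis of the lemma: the assumption $\sup_{w\in\D}\tilde S_y(w)>\delta$ for every $y$ says precisely that this supremum does not tend to $0$. The Blaschke condition only gives pointwise decay, $\tilde S_y(z)\to 0$ as $y\to\infty$ for each fixed $z$ (because $\sum_a\bigl(1-\rho(a,z)\bigr)<\infty$), not a uniform one; so the function $\psi$ and the ``definite fraction of the mass in a bounded annulus'' on which your plan rests do not exist in this setting. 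Second, even granting some annulus localization, the re-centering step cannot preserve the bound $>\delta$: moving the base point from $w$ to a zero $a$ with $d_H(a,w)\ge x$ multiplies the weight of another zero $b$ by $e^{-2(d_H(b,a)-d_H(b,w))}$, which can be as small as $e^{-2d_H(a,w)}\le e^{-2x}$; this loss is not bounded as $x\to\infty$, and the hypothesis gives you only $\tilde S_x(w)>\delta$, with no surplus to absorb it. You correctly identify this as the main obstacle, but the proposed fix (choose $x$ so the annulus contribution ``already exceeds $\delta$'', then do ``surplus bookkeeping'') has nothing to feed on: no surplus above $\delta$ is ever supplied.

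The missing idea is that no transport of the estimate is needed at all; it suffices to show that points near $z$ cannot carry the mass. Fix $z$ and $A$, and choose $x>A$ so large that $\tilde S_{x-A}(z)<e^{-2A}\delta$, which is possible by the pointwise decay at the single point $z$. If $d_H(z,w)\le A$, then every $a\notin D_H(w,x)$ satisfies $d_H(a,z)\ge x-A$ and $e^{-2d_H(a,w)}\le e^{2A}e^{-2d_H(a,z)}$, whence $\tilde S_x(w)\le e^{2A}\tilde S_{x-A}(z)<\delta$. Since by hypothesis $\sup_w\tilde S_x(w)>\delta$ at this very $x$, any $w$ with $\tilde S_x(w)>\delta$ must satisfy $d_H(z,w)>A$, and the lemma follows with the original $\delta$. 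This is the paper's argument; your branch ``if $d_H(z,w)\le A$, move to a faraway zero'' is precisely the step that cannot be completed, and the correct move is to show that this branch is vacuous for large $x$.
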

\begin{proof}
By the assumption on $\tilde S$, it will be enough to
show that $\tilde S_x(w)\le\delta$ if $d_H(w,z)\le A$ and $x$ is sufficiently large.
Take $x$ large enough so that $\tilde S_{x-A} (z) < e^{-2A} \delta$.
Then by using the triangle inequality twice we obtain that 
\begin{multline*}
\tilde S_x (w) = \sum_{a\in \ZTH\setminus D_H(z,x)}
e^{-2d_H(a,w)}
\\
 \le 
\sum_{a\in \ZTH\setminus D_H(z,x-A)}
e^{-2 d_H(a,z)+2A} = e^{2A} \tilde S_{x-A} (z) 
<\delta.
\end{multline*}
\end{proof}

Now we pass to the equivalence between conditions (b) and (d) in Theorem~\ref{NSC-P}. 

\begin{proposition}
Let $\Theta$ be an inner function with zeros $\{a_j\}$. 
Then $\Theta$ is a Blaschke product satisfying 
the uniform Blaschke tails condition \eqref{CS}
if and only if every divisor of $\Theta$ has the {\rm SIP}.
\end{proposition}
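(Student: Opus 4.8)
The plan is to prove the two implications separately, using the characterizations already established. Throughout, I work with $\tilde S_x(z) = \sum_{a \in \ZTH \setminus D_H(z,x)} e^{-2d_H(a,z)}$, which is comparable to $S_t(z)$ after the change of variables $t = \tanh x$, so that condition \eqref{CS} is equivalent to $\lim_{x \to \infty} \sup_{z \in \D} \tilde S_x(z) = 0$.

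\medskip

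\emph{The easy direction: \eqref{CS} $\Rightarrow$ every divisor has the SIP.} If $\Theta$ is a Blaschke product satisfying \eqref{CS}, then by Proposition~\ref{sufficient} we have $\Theta \in \mathcal P$, and in fact \eqref{CS} holds for any Blaschke subproduct $\Theta_1$ of $\Theta$, since the sum defining $S_t$ (equivalently $\tilde S_x$) for $\Theta_1$ is a partial sum of the one for $\Theta$, hence pointwise smaller. Now $\Theta_1 \in \mathcal P \subset \{\text{SIP}\}$ by Corollary~\ref{WEP-cor}(a). Since any inner divisor of a Blaschke product is itself a Blaschke product (it can have no singular factor), this covers all divisors. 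Alternatively and more directly: if $\rho(z,\ZTH) \ge 1/2$ then $-\log|\Theta_1(z)| \le 2 S_t(z)$ for $t \ge 1/2$, so $\eta_{\Theta_1}(t) \ge e^{-2\sup_\D S_t}$, which tends to $1$; hence $\Theta_1$ has the MAG and so the SIP by Theorem~\ref{motiv}.

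\medskip

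\emph{The hard direction: if \eqref{CS} fails, some divisor lacks the SIP.} Suppose $\Theta$ does not satisfy \eqref{CS}. First, if $\Theta$ is not even a Carleson--Newman Blaschke product (this includes the case where $\Theta$ has a nontrivial singular factor, since then it has a singular divisor which is zero-free and trivially cannot have the SIP), we are done: a non-$(CN)$ Blaschke product $\Theta$ has a divisor failing the WEP, hence failing the MAG, hence — one must check this — failing the SIP; more precisely, the whole point is to build a \emph{specific} bad divisor. So assume $\Theta \in (CN) \setminus \mathcal P$, i.e. $\delta := \inf_{x > 0} \sup_{z \in \D} \tilde S_x(z) > 0$. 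The goal is to extract a subsequence of the zeros of $\Theta$ forming a Blaschke subproduct $B$ whose sublevel set $\{0 < |B| < 1 - \eps\}$ contains hyperbolic discs of arbitrarily large radius, so that $B$ fails the narrowness criterion of Theorem~\ref{narrowchar} and hence the SIP. The engine is Lemma~\ref{bigfar}: starting from an arbitrary base point, it produces, for each $A$, a point $w$ far from all previously selected centers with $\tilde S_x(w) > \delta$ for some large $x$; the zeros contributing to this tail sum lie outside $D_H(w,x)$, so the "mass" $\delta$ is carried by zeros that are hyperbolically far from $w$ in a controlled way. One then iterates to select a sparse, well-separated sequence of such zeros, and argues that the Blaschke product $B$ on this sequence has, near each selected center $w$, a sublevel set containing a large hyperbolic disc — because $-\log|B|$ on a large disc around $w$ stays bounded below (a converse-type estimate using that $\tilde S_x(w) > \delta$ forces $|B|$ small on a macroscopic ball, via Harnack run backwards, essentially the reverse of the argument in Lemma~\ref{step1} and Proposition~\ref{sufficient}).

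\medskip

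The main obstacle is the bookkeeping in this extraction: one must simultaneously ensure (i) the selected zeros form a genuine Blaschke subproduct (automatic, being a sub-sequence), (ii) the sublevel sets $\{0 < |B| < 1-\eps\}$ genuinely contain hyperbolic discs of radius $R \to \infty$, and (iii) the local contribution $\delta$ to $\tilde S_x(w)$ actually translates into $|B|$ being bounded away from $1$ on a \emph{large} disc, not merely at the single point $w$ — this is where one needs a quantitative lower bound for $-\log|B|$ on $D_H(w, cR)$ coming from the tail mass, which I expect to run through Harnack's inequality applied to the positive harmonic majorant, in the spirit of Lemma~\ref{step} but used to propagate smallness rather than to create oscillation. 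The separation of successive centers (guaranteed by the $d_H(z,w) > A$ clause in Lemma~\ref{bigfar}) is what keeps the various localized pieces from interfering, and keeping track of how $x(z,A)$ grows relative to $A$ at each stage is the delicate part.
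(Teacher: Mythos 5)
Your easy direction is fine and coincides with the paper's. The hard direction, however, has a genuine gap in its core mechanism. You propose to detect failure of the SIP for the bad divisor $B$ via Theorem~\ref{narrowchar}, i.e.\ to show that for a \emph{fixed} $\eps>0$ the set $\{0<|B|<1-\eps\}$ contains hyperbolic discs of arbitrarily large radius, and to obtain this by propagating the pointwise bound $-\log|B(w)|\gtrsim\delta$ (coming from the tail mass $\tilde S_x(w)>\delta$) over a disc $D_H(w,cR)$ by Harnack. This does not work quantitatively: Harnack's inequality on a hyperbolic disc of radius $cR$ degrades by a factor comparable to $e^{-2cR}$, so all you get is $-\log|B|\gtrsim e^{-2cR}\delta$ on $D_H(w,cR)$, i.e.\ a sublevel bound $1-\eps_R$ with $\eps_R\to0$; no single $\eps$ serves for all $R$, which is exactly what the narrowness criterion requires. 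The paper avoids this entirely by using the MAG criterion of Theorem~\ref{motiv}, which only needs \emph{pointwise} witnesses: points $w_k$ with $d_H(w_k,\mathcal Z(\tilde\Theta))\ge k\to\infty$ at which $|\tilde\Theta(w_k)|$ stays bounded away from $1$. Correspondingly, the bad divisor is not "a sparse, well-separated subsequence of zeros'' as you suggest — a sparse subsequence would in general not retain tail mass $\gtrsim\delta$ at the witness points — but is obtained by \emph{removing} from $\Theta$ the zeros lying in disjoint discs $D_H(w_k,k)$ around the witness points and keeping all the rest; the induction (via Lemma~\ref{bigfar}) chooses $w_k$, $x_k$, $R_k$ so that the retained zeros in $D_H(w_k,R_k)\setminus D_H(w_k,x_k)$ contribute at least $\tfrac34\delta$ at $w_k$ while the zeros in the previously removed discs contribute at most $\tfrac14\delta$ there, giving $\log(1/|\tilde\Theta(w_k)|)\gtrsim\delta/2$ with $d_H(w_k,\mathcal Z(\tilde\Theta))\ge k$. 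This bookkeeping is precisely the "delicate part'' you leave open, and it is the substance of the proof.

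A secondary but real error: your remark that a non-Carleson--Newman Blaschke product "has a divisor failing the WEP, hence failing the MAG, hence failing the SIP'' uses a false implication — failure of the WEP does not imply failure of the MAG/SIP (the paper itself exhibits Blaschke products with the SIP but without the WEP). Fortunately this case split is unnecessary: the failure of \eqref{CS} is exactly $\inf_{x>0}\sup_{z\in\D}\tilde S_x(z)>0$, with no Carleson--Newman hypothesis, and the removal construction above treats that situation uniformly (after first discarding, as you and the paper both do, a possible singular factor, which is zero-free and hence trivially not SIP).
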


\begin{proof}
We have already seen in Proposition \ref{sufficient} that \eqref{CS} is sufficient for 
a Blaschke product to have the SIP, and since condition \eqref{CS} is preserved under removal of any subset of the zeros, it 
will also imply that Blaschke products which divide $\Theta$
have the SIP.

Conversely, if $\Theta$ has a non-trivial singular factor, 
then it will be a divisor which fails the SIP, so we can assume that $\Theta$ is a Blaschke product. We proceed by contradiction, and assume that there exists
$\delta>0$ such that $\sup_{z \in \D} \tilde S_x (z)> \delta$ for any $x\in (0,\infty)$.

Using Lemma~\ref{bigfar} we now construct in an inductive argument a sequence of disjoint hyperbolic discs 
$D_H(w_k,k)$. A divisor $\tilde \Theta$ failing the SIP 
will be obtained
by removing the zeros of $\Theta$ lying in those discs,
so that $\mathcal Z(\tilde \Theta) = \ZTH\setminus (\cup_{k\ge 1}D_H(w_k,k))$.

We start by choosing $w_1, x_1$ with $\tilde S_{x_1}(w_1)>\delta$.

Then we choose $R_1 > x_1$ such that 
$$
\sum_{a\in\ZTH\cap D_H(w_1,R_1)\setminus D_H(w_1,x_1)}e^{-2d_H(a,w_1)} \ge \frac34 \delta 
$$
and
$$
\sum_{a\in\ZTH\cap D_H(w_1,1)} e^{-2d_H(a,w)} \le \frac14 \delta,\qquad  d_H(w,w_1)>R_1.
$$

On step $k\ge 2$ we apply Lemma~\ref{bigfar} with $z=w_1$ and 
$$
A=\max_{1\le s<k}d_H(w_1,w_s)+R_{k-1}+k
$$ 
to get $w_k=w$ and $x_k=x$ such that 
$d_H(w_1,w_k)> A$, $x_k>A$, and $\tilde S_{x_k}(w_k)>\delta$.
Next we choose $R_k> x_k$ such that 
$$
\sum_{a\in\ZTH\cap D_H(w_k,R_k)\setminus D_H(w_k,x_k)  }e^{-2d_H(a,w_k)} \ge \frac34 \delta 
$$
and
$$
\sum_{a\in\ZTH\cap (\cup_{1\le s\le k}D_H(w_s,s))} e^{-2d_H(a,w)} \le \frac14 \delta,\qquad  d_H(w,w_1)>R_k.
$$
This completes the induction step. 

Then $D_H(w_k,R_k)\cap D_H(w_j,j)=\emptyset$, $j>k$ and $k< x_k$. Therefore,  
$$
\sum_{a\in\ZTH\setminus (\cup_{j\ge k} D_H(w_j,j)) } e^{-2d_H(a,w_k)} \ge \frac34 \delta,\qquad  k\ge 1.
$$
Next, $d_H(w_k,w_1)>R_{k-1}$, and hence,
$$
\sum_{a\in\ZTH\cap (\cup_{1\le j\le k-1} D_H(w_j,j))} e^{-2d_H(a,w_k)} \le \frac14 \delta,\qquad  k\ge 1.
$$

%Suppose now that $(w_j,x_j)$ have been chosen, $1\le j \le k-1$,
%with $\tilde S_{x_j}(w_j)>\delta$ for all $j\le k-1$.  Since we
%have a finite number of convergent series, we can choose $R_1
%=R_{1,k}>0$
%such that for all $j\le k-1$, 
%\[
%\sum_{a: \Theta(a)=0, x_j\le d_H(a,w_j)\le R_1}
%e^{-2d_H(a,w_j)} \ge \frac34 \delta . 
%\]
%Since the set $Z_{k-1}:=\{a: \Theta(a)=0, \exists j\le k-1: d_H(a,w_j)\le x_j\}$ is finite, we can choose $R_2>0$ such that for 
%any $w$ with $ d_H(a,w)\ge R_2$ for all $a\in Z_{k-1}$,
%\[
%\sum_{a\in Z_{k-1}} 
%e^{-2d_H(a,w)} \le \frac14 \delta .
%\]
%Choose $A_0=1$ to initialize the construction,
%and 
%$$A_k:=3 \max (R_1, R_2, A_{k-1}, \mbox{diam}_H (Z_{k-1})). 
%$$.  
%\noindent Then $A_k\ge 3^k$. Apply Lemma \ref{bigfar} with some $z\in Z_{k-1}$, $A=A_k$
%to obtain a point $w_k$, $d_H (z, w_k) > A_k$ and some $x'_k>A_k$ with $\tilde S_{x'_k}(w_k) > \delta$.  
%Then we choose $x_k <A_k/3 <x'_k$, but
%large enough so that $x_k\to \infty$. Notice that 
%$\tilde S_{x_k}(w_k) \ge \tilde S_{x'_k}(w_k) >\delta$,
%so that the induction hypothesis is verified.

%Let $\tilde \Theta$ be the factor of $\Theta$ with zeros 
%$$ \tilde \Theta^{-1} \{0\} = \{a: \Theta(a)=0, d_H(a,w_k)\ge x_k,  \text { for any } k \geq 1 \}.
%$$ 

The points $w_k$ verify $d_H(w_k, \mathcal Z(\tilde \Theta)) \ge k$, and
\begin{multline*}
\log \frac{1}{|\tilde \Theta(w_k)|} \gtrsim
\sum_{ a\in \mathcal Z(\tilde \Theta)} e^{-2d_H(a,w_k)}
\\
=  \sum_{a\in\ZTH\setminus (\cup_{j\ge k} D_H(w_j,j))} e^{-2d_H(a,w_k)}   -
\!\!\!  \sum_{a\in\ZTH\cap (\cup_{1\le j\le k-1} D_H(w_j,j))} e^{-2d_H(a,w_k)}
\\
\ge 
  \frac{\delta}{2},\qquad  k\ge 1.
\end{multline*}
This shows that $\tilde \Theta$ does not have the SIP. 
\end{proof}

% \begin{proof*}{\it Proof of Theorem \ref{NSC-P}.}
% By Proposition \ref{sufficient}, 
%  the uniform Blaschke tails condition \eqref{suff} implies that  $\Theta\in \mathcal P$. 

% \end{proof*}

\section{Proof of Theorem \ref{sipentropy}}
\label{divsip}
We will use the family of dyadic arcs of the unit circle given by 
$$
\{e^{i \theta} : k 2^{-m} \leq \theta / 2\pi < (k+1) 2^{-m} \}, \qquad 0\le k<2^m,\, m\ge 1.
$$
When $I$ is an arc on the unit circle we denote by $2I$
the arc with the same center and twice the length.

% \subsection{Singular inner functions with singularities on a compactum of finite entropy}

% \ {}

\begin{proof*}{\it Proof of Theorem \ref{sipentropy}.}

\ {}
{\it Proof of ${\rm(a)} \Rightarrow {\rm(b)}$.}
We follow the notations and several arguments from \cite{BNT}.
% First note that the equivalence of (a) and (c) is a consequence of \cite[Lemma 4]{BNT}.
Let $\mathcal G=\{J_n\}$ be the family of maximal dyadic arcs of $\partial \D$ 
such that $2J_n \subset \partial \D \setminus E$. The arcs $\{J_n \}$ form a sort of Whitney
decomposition of $\partial \D \setminus E$. From \cite[Lemma 12(b)]{BNT},
we know that if $E$ has finite entropy, then 
\begin{equation}
\label{entropint}
\sum_{J_n \in \mathcal G} |J_n| \log\frac{1}{|J_n|} < \infty.
\end{equation}
Let $\mathcal F$ be the family of dyadic arcs $I$ of
$\partial \D$ which are not contained in any arc of ${\mathcal G}$, that is,  $2I \cap E\neq \emptyset$. 
As in \cite[pp. 1000--1001]{BNT}, we see that 
\[
\sum_{I \in \mathcal F} |I|  < \infty. 
\]
Let $B_1$ be the Blaschke product with zeros $\{z_I :  I \in \mathcal F\}$,
and $f:=B_1 S_\mu$. 

As before, for an arc $J$ of $\partial \D$, let $Q(J)= \{ z \in \D: \frac{z}{|z|} \in J, 1-|z|<|J|\}$ be the Carleson square based at $J$.

{\bf Claim. }  
For any $J\in \mathcal G$, $z\in Q(J)$, we have 
\[
\log \frac{1}{|f(z)|} \lesssim \frac{1-|z|}{1-|z_J|} \log \frac{1}{|f(z_J)|}
\lesssim \frac{1-|z|}{|J|^2}.
\]
{\it Proof of the Claim.}
Since $z\in Q(J)$ and $J\in \mathcal G$, we have $\rho(z_J , \mathcal Z(B_1)) 
\gtrsim 1$. Then
\[
\log \frac{1}{|f(z)|} \lesssim P[\mu] (z) 
+ \sum_{I \in \mathcal F} \frac{(1-|z_I|)(1-|z|)}{|1-\overline{z_I} z|^2}.
\]
Since $J\in \mathcal G$, $2J\cap \mbox{supp }\mu = \emptyset$ and $Q(2J) \cap \mathcal Z(B_1) = \emptyset$. So for any $z\in Q(J)$ and $\xi \in \mbox{supp }\mu $, $|\xi-z| \simeq |\xi-z_J|$.
If furthermore $I \in \mathcal F$, then $|1-\overline{z_I} z| \simeq |1-\overline{z_I} z_J|$.
Thus
\[
P[\mu] (z) \simeq \frac{1-|z|}{1-|z_J|} P[\mu] (z_J) 
\mbox{ \ and \ }
\frac1{|1-\overline{z_I} z|^2} \simeq \frac1{|1-\overline{z_I} z_J|^2} .
\]
We deduce that 
\begin{multline*}
\log \frac{1}{|f(z)|} \simeq 
 \frac{1-|z|}{1-|z_J|}
 \left(
 P[\mu] (z_J) + \sum_{I \in \mathcal F} \frac{(1-|z_I|)(1-|z_J|)}{|1-\overline{z_I} z_J|^2}
 \right)
 \\
 \\ \lesssim
 \frac{1-|z|}{1-|z_J|} \log \frac{1}{|f(z_J)|},
\end{multline*}
which yields the first estimate. For the second one, observe that by Harnack's inequality,
$P[\mu] (z_J)\lesssim |J|^{-1}$ and 
\[
\sum_{I \in \mathcal F} \frac{(1-|z_I|)(1-|z_J|)}{|1-\overline{z_I} z_J|^2}
\lesssim
\frac{1}{1-|z_J|}\sum_{I \in \mathcal F} 1-|z_I| 
\lesssim 
\frac1{|J|}.
\]
The Claim is proved.  \hfill \qed

Next, we %For each $J\in \mathcal G$ 
consider 
\[
\mathcal L := \bigcup_{J\in \mathcal G} 
\{ L \mbox{ dyadic arc: } L\subset J, |L| \ge |J|^2\}.
\]
Note that
\[
\sum_{J \in \mathcal L}|J|  %\sum_{L\subset J, |L| \ge |J|^2} |L| 
\simeq 
\sum_{J \in \mathcal G} |J| \log_2 |J|^{-1} < \infty,
\]
by \eqref{entropint}. So the sequence $\{ z_L :  L \in \mathcal L\}$
satisfies the Blaschke condition. Let $B_2$ be the corresponding 
Blaschke product. We will prove that $fB_2 = S_\mu B_1 B_2$ has the SIP. We need to show that $|f (z) B_2(z)|$ is close to $1$ when $\rho(z, \mathcal Z(B_1B_2))$ is sufficiently close to $1$.

The Claim gives that $\log |f(z)|^{-1} \le \eps$ whenever $z\in Q(J)$ and 
$1-|z| \le \eps |J|^2$, for some $J \in \mathcal G$. 
Given $\eps>0$, there exists $\delta>0$ such that
\[
\left\{ 
z: \rho(z, \mathcal Z(B_1B_2)) \ge 1-\delta
\right\}
\subset
\bigcup_{J \in \mathcal G} 
\left\{ 
z\in Q_J: 1-|z| \le \eps |J|^2
\right\}.
\]
Then it only remains to show that $\log |B_2(z)|^{-1}\to0$ as $\eps \to 0$
when $z\in Q(J)$, $1-|z| \le \eps |J|^2$, for some $J \in \mathcal G$.
For such a point $z$, let $j_0:=j_0(z)$ be the smallest positive integer 
such that $2^{j_0} Q(z)$ contains a zero of $B_2$, where $Q(z)=Q(L_z)$ with $L_z$
the shortest dyadic arc $L$ such that $z \in Q(L)$. 
We have $j_0(z) \gtrsim \log_2 \eps^{-1}$.
Then
\begin{multline*}
\log |B_2(z)|^{-1} 
\simeq \sum_{L \in \mathcal L} 
 \frac{(1-|z_L|)(1-|z|)}{|1-\overline{z_L} z|^2}
 \\
 \lesssim 
 \sum_{j\ge j_0}  \frac1{2^{2j} (1-|z|)} 
 \sum_{L\in \mathcal L,\, z_L \in Q(2^jL_z)\setminus Q(2^{j-1}L_z)} 1-|z_L| 
 \\ \lesssim 
 \sum_{j\ge j_0} \frac{j2^j}{2^{2j} } \lesssim  \frac{j_0}{2^{j_0} } 
 \lesssim \eps \log \eps^{-1}.
\end{multline*}
This completes the proof of the implication ${\rm(a)} \Rightarrow {\rm(b)}$.

\vskip.3cm
{\it Proof of ${\rm(b)} \Rightarrow {\rm(a)}$.}
Lemma~4 of \cite{BNT} establishes that a closed set $E \subset \partial \D$ has finite entropy if and only if for any positive Borel singular measure $\mu$ supported on $E$ and any constant $C>0$, one has 
$$
\sum |J| < \infty ,
$$
where the sum is taken over all dyadic arcs $J \subset \partial \D$ satisfying the property $P[\mu] (z_J) \ge C$. 
%Here $z_J = (1 - |J| ) \xi_J$ where $\xi_J$ is the center of $J$. 
Arguing by contradiction, suppose that there exists a singular measure $\mu$
supported on $E$ %and $C>0$ 
such that 
\[
\sum |I| = \infty,
\]
where the sum is taken over all dyadic arcs $I$ such that $P[\mu](z_I) \ge 2$. By Harnack's inequality, we have $P[\mu](z) \ge 1$ when $z\in D_\rho (z_I, 1/3)$
and we deduce
\[
\int_{\{z\in \D: P[\mu](z) \ge 1 \}} \frac{dA(z)}{1-|z|} = \infty,
\]
where $dA$ denotes the area measure. This will contradict the next Lemma,
which parallels the analogous fact for inner functions having the WEP, \cite[Proposition 4]{Bo}.

\begin{lemma}
If $S_\mu$ is a divisor of an inner function having the {\rm SIP}, then for any $C>0$,
\[
\int_{ \{z\in \D: P[\mu](z) \ge C \}} \frac{dA(z)}{1-|z|} < \infty . 
\]
\end{lemma}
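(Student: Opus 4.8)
The goal is to bound the "logarithmic area" of the set where the Poisson balayage $P[\mu]$ is large, assuming $S_\mu$ divides an inner function $\Theta$ with the SIP. The natural strategy is to use Theorem~\ref{motiv}: the SIP for $\Theta$ is equivalent to MAG, i.e. $|\Theta(z)| \to 1$ as $d_H(z,\ZTH) \to \infty$; more quantitatively, for every $\eps>0$ there is $R_0$ such that $|\Theta(z)|\ge 1-\eps$ whenever $d_H(z,\ZTH)\ge R_0$, equivalently $-\log|\Theta(z)|\le \eps'$ on that set. Since $S_\mu \mid \Theta$, we have $|\Theta| \le |S_\mu|$, hence $-\log|\Theta| \ge -\log|S_\mu| = P[\mu]$. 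Therefore the set $\{z: P[\mu](z)\ge C\}$ is contained, for $C$ large, in the set $\{z: d_H(z,\ZTH) < R_0\}$ where $R_0 = R_0(C)$ comes from MAG. So it suffices to show that the region within a fixed hyperbolic distance $R_0$ of the zero set $\ZTH$ has finite logarithmic area intersected with $\{P[\mu]\ge C\}$ — but actually we want more: we want to reduce to a statement purely about the Blaschke sequence.

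First I would fix $C>0$, set $\eps$ small enough (say so that $1-\eps < e^{-2C'}$ for a suitable $C'$), and invoke Theorem~\ref{motiv} to get $R_0$ so that $P[\mu](z)\le -\log|\Theta(z)| \le C'' < C$ whenever $d_H(z,\ZTH)\ge R_0$; thus $\{P[\mu]\ge C\} \subset \bigcup_{a\in\ZTH} D_H(a,R_0)$. Then I would estimate
\[
\int_{\{P[\mu]\ge C\}} \frac{dA(z)}{1-|z|} \le \sum_{a\in\ZTH} \int_{D_H(a,R_0)} \frac{dA(z)}{1-|z|} \lesssim_{R_0} \sum_{a\in\ZTH} (1-|a|),
\]
using that a hyperbolic disc of fixed radius around $a$ has Euclidean area $\asymp (1-|a|)^2$ and sits at distance $\asymp 1-|a|$ from the boundary. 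So the whole thing is controlled by $\sum_{a\in\ZTH}(1-|a|)$ — but this is only finite if $\Theta$ has a Blaschke part and no, wait: $\Theta$ may fail the Blaschke condition if it has a singular part, and even its Blaschke part need not converge... Actually $\Theta$ is an inner function, so its Blaschke part does satisfy $\sum(1-|a|)<\infty$; the singular part contributes no zeros. Hence $\sum_{a\in\ZTH}(1-|a|)<\infty$ automatically, and we are done.

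**The main obstacle.** The delicate point is the reduction from $P[\mu]$ large to $d_H(z,\ZTH)$ small: one must be careful that MAG gives control of $-\log|\Theta|$, which dominates $-\log|S_\mu| = P[\mu]$ only because $|S_\mu|\ge|\Theta|$ pointwise (a divisor has larger modulus). One should double-check the direction of this inequality and that the threshold $R_0$ depends only on $C$ (through $\eps$), not on $\mu$ or on the particular SIP function $\Theta$ beyond its being fixed. A secondary technical point is the elementary geometric estimate $\int_{D_H(a,R_0)} \frac{dA}{1-|z|} \lesssim_{R_0} (1-|a|)$: on a hyperbolic ball of fixed radius around $a$ one has $1-|z| \asymp 1-|a|$ with constants depending on $R_0$, and the Euclidean area is $\asymp (1-|a|)^2$, giving the bound; this is routine but should be stated. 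No serious obstruction remains once these points are handled; the proof is short, and it closes the implication ${\rm(b)}\Rightarrow{\rm(a)}$ via the contradiction set up just before the Lemma.
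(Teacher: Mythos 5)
Your argument is correct and essentially reproduces the paper's proof: both use $|\Theta|\le |S_\mu|$ together with the SIP (MAG, i.e.\ $\varkappa_\Theta(e^{-C})<1$) to place $\{z: P[\mu](z)\ge C\}$ inside a fixed pseudohyperbolic neighborhood of $\mathcal Z(\Theta)$, and then the Blaschke condition on the zeros to bound $\int \frac{dA(z)}{1-|z|}$ over that neighborhood. The only difference is presentational: the paper phrases the neighborhood via the pseudo-inverse $\varkappa_\Theta$ and leaves the final area estimate implicit, while you spell it out disc by disc.
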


\begin{proof}
Let $B$ be an inner function such that $\Theta=B S_\mu$ has the SIP. Using that $|\Theta(z)| \le |S_\mu(z)|$, $z \in \D$ and that $\Theta$ has the SIP, for any $C>0$ we have 
\begin{multline*}
\left\{ z\in \D: P[\mu](z) \ge C \right\}
= \left\{ z\in \D: |S_\mu(z)| \le e^{-C} \right\}
\\
\subset
\left\{ z\in \D: |\Theta(z)| \le e^{-C} \right\}
\\\subset
\left\{ z\in \D: \rho(z, \mathcal Z(B)) \le \varkappa_\Theta(e^{-C}) \right\}=: \Gamma.
\end{multline*}
The Blaschke condition on the zeros of $B$ gives that 
$$
\int_\Gamma \frac{dA(z)}{1-|z|} < \infty , 
$$
which finishes the proof. 
\end{proof}
\end{proof*}

\section{Examples and further results}
\label{examples}
\subsection{Thin Blaschke products, super-separated zeros and the SIP}
\label{thinbp}
A sequence $\{a_k\}$ of points in the unit disc is called an exponential sequence if 
$$
\limsup_{k \to \infty} \frac{1- |a_{k+1}|}{1-|a_k|} < 1.
$$
\begin{proposition}\label{Corol1}${}$

 {\rm (a)} Any finite product of thin Blaschke products  satisfies the SIP.

 {\rm (b)} Any Blaschke product whose zero set is a finite union of exponential sequences satisfies the SIP. 
\end{proposition}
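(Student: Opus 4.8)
The plan is to reduce everything to the Maximal Asymptotic Growth property and apply Theorem~\ref{motiv}. Since MAG is stable under finite products — if $\rho(z,\mathcal Z(\Theta_1\Theta_2))\ge t$ then $\rho(z,\mathcal Z(\Theta_j))\ge t$, whence $\eta_{\Theta_1\Theta_2}(t)\ge\eta_{\Theta_1}(t)\eta_{\Theta_2}(t)\to1$ — so is the SIP by Theorem~\ref{motiv}; and for part (b), discarding repetitions lets us split the zero set into finitely many pairwise disjoint exponential sequences (a subsequence of an exponential sequence is again exponential), so $B$ becomes a finite product of Blaschke products each with a single exponential sequence as zero set. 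Hence it suffices to prove: (i) every thin Blaschke product has MAG; (ii) every Blaschke product whose zeros form a single exponential sequence has MAG.

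For (i): given $\eps>0$, choose (by thinness) $N$ so large that the tail $\{a_j\}_{j\ge N}$ is super-separated and $\sum_{j\ne k,\ j\ge N}(1-\rho(a_j,a_k)^2)<\eps$ for all $k\ge N$, and factor $B=B_0\Psi$ with $B_0$ the finite Blaschke product on $a_1,\dots,a_{N-1}$. If $\rho(z,\mathcal Z(B))\ge t$ then $|B_0(z)|\ge t^{N-1}\to1$, so one must bound $\log|\Psi(z)|^{-1}=\sum_{j\ge N}\log\rho(z,a_j)^{-1}$. Let $a_{k_0}$, $k_0\ge N$, realise $\min_{j\ge N}\rho(z,a_j)\ (\ge t)$; the term at $k_0$ is $\le 2(1-t^2)$, and for $j\ne k_0$ one has $d_H(z,a_j)\ge\max(d_H(z,a_{k_0}),\,d_H(a_{k_0},a_j)-d_H(z,a_{k_0}))$ because $a_{k_0}$ is the nearest zero. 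I would combine this triangle inequality with the thinness bound $\sum_{j\ne k_0}(1-\rho(a_{k_0},a_j)^2)<\eps$ and the uniformly large super-separation of the tail to obtain $\sum_{j\ne k_0,\ j\ge N}(1-\rho(z,a_j)^2)\lesssim\eps$ uniformly in $z$; then $\log|B(z)|^{-1}\to0$ as $t\to1$, i.e. $B$ has MAG. (Alternatively: thin Blaschke products belong to $\mathcal P$ — super-separation splits their sublevel sets into uniformly small pieces, one around each zero — so Theorem~\ref{NSC-P} and Corollary~\ref{WEP-cor}(a) apply directly, using that $\mathcal P$ is product-stable because the functions $S_t$ add over the zero sets.)

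For (ii): discard finitely many zeros so that $1-|a_{k+1}|\le q(1-|a_k|)$ for all $k$, hence $v_k:=1-|a_k|\le q^k$ for some $q\in(0,1)$. Set $u=1-|z|$ and suppose $\rho(z,\mathcal Z(B))\ge t$. For each $k$ two bounds hold: $1-\rho(z,a_k)^2\le1-t^2$, and, since $|1-\overline{a_k}z|\ge 1-|a_k||z|\ge\max(u,v_k)$, also $1-\rho(z,a_k)^2=\frac{(1-|z|^2)(1-|a_k|^2)}{|1-\overline{a_k}z|^2}\le\frac{4uv_k}{\max(u,v_k)^2}$. Splitting $\sum_k(1-\rho(z,a_k)^2)$ according to the size of $v_k$ relative to $u$, with break-points at $u(1-t^2)$ and $u/(1-t^2)$, and using the geometric decay of $\{v_k\}$ (so that only $O_q(\log\frac1{1-t^2})$ of the $v_k$ fall in any interval of ratio $(1-t^2)^{-1}$, while $\sum_k v_k<\infty$), each of the four resulting pieces is $O_q((1-t^2)\log\frac1{1-t^2})$. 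As $\log|B(z)|^{-1}\le 2\sum_k(1-\rho(z,a_k)^2)$ for $t\ge\frac12$, this gives $\log|B(z)|^{-1}\lesssim_q(1-t^2)\log\frac1{1-t^2}\to0$ as $t\to1$, uniformly in $z$; so $B$ has MAG.

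I expect step (i) to be the main obstacle. In (ii) the zeros occupy geometrically separated Euclidean scales, so the Blaschke-tail estimate is essentially a convergent geometric series and the bound is transparent. In (i) a thin sequence may be locally intricate, and one must control, uniformly in $z$, the joint contribution of all zeros other than the nearest one; it is exactly the super-separation of the tail that keeps the zeros near $a_{k_0}$ sparse enough for this to work, but making the estimate uniform is where the real work lies.
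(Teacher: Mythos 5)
Your reduction is the same as the paper's for part (a) (MAG is stable under finite products, so it suffices to show a single thin Blaschke product has MAG), and your part (b) is correct but follows a different route: the paper verifies the Carleson-box condition (c) of Theorem~\ref{NSC-P}, which places these products in $\mathcal P$ (so even all their divisors are SIP), whereas your direct estimate of $\sum_k(1-\rho(z,a_k)^2)$ off the zero set, using the geometric decay of $1-|a_k|$, proves MAG by hand; it is more elementary and self-contained, at the cost of the weaker conclusion. That part of your proposal is complete and sound.

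The gap is in part (a), exactly at the step you yourself flag. The paper settles ``thin $\Rightarrow$ MAG'' by citing the classical estimate of Hoffman (\cite[Lemma 1.4 of Chapter X]{Ga}); you instead attempt a direct proof, and the key uniform bound $\sum_{j\ne k_0,\, j\ge N}(1-\rho(z,a_j)^2)\lesssim\eps$ is asserted, not proved. The tools you name do not yield it as combined in your sketch: writing $A=d_H(z,a_{k_0})$, $B_j=d_H(a_{k_0},a_j)$, the inequality $d_H(z,a_j)\ge\max(A,B_j-A)$ gives only $e^{-2d_H(z,a_j)}\le e^{-B_j}$, i.e.\ the square roots of the thinness terms $e^{-2B_j}$, and $\sum_j e^{-B_j}$ is not controlled by $\sum_j e^{-2B_j}<\eps$ (it need not even be small). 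Nor can super-separation of the tail close the gap by a counting argument: super-separated Blaschke products need not satisfy the SIP at all --- by Proposition~\ref{supersep} a super-separated SIP product is already thin, and super-separation is far from implying thinness --- so any argument whose quantitative input is only super-separation plus the thin sums at the nearest zero, transferred by the triangle inequality, cannot work as stated. The same objection applies to your parenthetical alternative: the claim that ``super-separation splits the sublevel sets into uniformly small pieces'' is false in general; it is the full thinness condition, through the Hoffman/Kerr-Lawson type estimate (the tail separation constants $\prod_{j\ne k}\rho(a_j,a_k)$ tending to $1$ force $|B|$ to be uniformly close to $1$ away from the zeros), that makes $\{0<|B|<1-\eps\}$ collapse onto small neighbourhoods of the zeros. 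So part (a) needs either that citation (as in the paper) or a genuine proof of the uniform off-zero estimate; with that supplied, the rest of your argument stands.
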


Note that Blaschke products in both of these classes are Carleson--Newman, therefore by part (c) of Corollary \ref{WEP-cor} belong to $\mathcal P$. 

\begin{proof}
In part (a) one only needs to show that a thin Blaschke product satisfies the MAG condition, which by Theorem \ref{motiv} means that it satisfies the SIP. This follows from classical estimates of K.Hoffmann, see \cite[Lemma 1.4 of Chapter X]{Ga}.

For (b), Theorem~\ref{NSC-P}~(c) shows that the Blaschke product is in the class $\mathcal{P}$ and hence satisfies the SIP. 
% we use the following (crude) elementary estimate 
% \[
% 1- \rho(z,w) \le 4 \min \left(  \frac{1-|z|}{1-|w|} , \frac{1-|w|}{1-|z|}\right), \quad z, w \in \D .
% \]
% As before one can assume that the zeros $\{a_k \}$ are an exponential sequence. Let $0<\alpha < 1$ such that $1-|a_{k+1}| \le \alpha (1-% % |a_{k}| )$ for $k=1,2, \ldots$. 
% Suppose that $1-t<\alpha^N$, then 
% \begin{multline*}
% S_{t}(z) = \sum_{j: 1-|a_{j}| > \alpha^{-N}(1-|z|)} 1-\rho(z,a_j) + 
% \sum_{j: \alpha^N(1-|z|)\le 1-|a_{j}| \le  \alpha^{-N}(1-|z|) } 1-\rho(z,a_j) 
\\
% + \sum_{j: 1-|a_{j}| <\alpha^N(1-|z|)} 1-\rho(z,a_j) 
% \\
% \lesssim c_\alpha \alpha^N + N \alpha^N .
% \end{multline*}
% So given $\eps>0$ we can choose an integer $N$ and $\delta=\alpha^N$ such that $S_{t}(z) <\eps$ for $t>1-\delta$.
\end{proof}

Of course, many SIP Blaschke products are not thin (for instance the Blaschke product with zeros $ 1- 2^{-j}$, $j \geq 1$). 
However, if the zeros of an inner function are sufficiently separated, then the SIP implies thinness. 
Recall that a sequence $\{a_k \} \subset \D$ is called \emph{super-separated} if for any $\delta>0$, there exists
$N\in \N$ such that $\rho(a_j,a_k) \ge 1-\delta$ for any $j>k\ge N$. 

\begin{proposition}
\label{supersep}
Let $\Theta$ be an inner function whose zeros are super-separated. Assume that $\Theta$ satisfies the {\rm SIP}. Then $\Theta$ is a thin Blaschke product. 
\end{proposition}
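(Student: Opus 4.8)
The plan is to show that a super-separated zero sequence together with the SIP forces the thinness condition $\lim_{k\to\infty}(1-|a_k|^2)|\Theta'(a_k)| = 1$, equivalently $\lim_{k\to\infty}\prod_{j\ne k}\rho(a_j,a_k) = 1$. First I would record that, since the $\{a_k\}$ are super-separated, for every $\delta>0$ there is $N$ so that for $j,k>N$, $j\ne k$, one has $\rho(a_j,a_k)\ge 1-\delta$; in particular the zero set is a Blaschke sequence (it is a finite union of separated sequences), so $\Theta$ is a Blaschke product, and the only possible obstruction to thinness is the presence of infinitely many zeros that are close to some fixed $a_k$ in a mildly accumulated way.

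The key idea is to use the SIP at the points $a_k$ themselves. Fix $\eps>0$ and let $\delta=\delta(\eps)>0$ be given by the MAG reformulation of the SIP (Theorem~\ref{motiv}), so that $|\Theta(z)|\ge 1-\eps$ whenever $\rho(z,\ZTH)\ge 1-\delta$. Write $\Theta = b_{a_k}\cdot \Theta_k$, where $\Theta_k$ is the Blaschke product over the zeros $\{a_j\}_{j\ne k}$. I would then find, for $k$ large, a point $z_k$ very close to $a_k$ but at controlled pseudohyperbolic distance from it — precisely, choose $z_k$ with $\rho(z_k,a_k)$ slightly larger than a threshold depending on $\delta$ — and argue that $\rho(z_k,\ZTH)\ge 1-\delta$. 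This last step is where super-separation is essential: any zero $a_j$ with $j\ne k$ (and $j,k$ large) satisfies $\rho(a_j,a_k)\ge 1-\delta'$ for $\delta'$ as small as we like, so by the triangle inequality for the pseudohyperbolic metric $\rho(z_k,a_j)$ is also close to $1$. Hence $|\Theta(z_k)|\ge 1-\eps$, while $|b_{a_k}(z_k)| = \rho(z_k,a_k)$ can be taken close to $1$ as well, so $|\Theta_k(z_k)|\ge (1-\eps)/\rho(z_k,a_k)$ is close to $1$; letting $z_k\to a_k$ and using continuity of $\Theta_k$ at $a_k$ (it is analytic and nonvanishing there) gives $|\Theta_k(a_k)| = \prod_{j\ne k}\rho(a_j,a_k)\ge 1-\eps$ for all large $k$. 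Since $\eps>0$ is arbitrary, $\lim_{k\to\infty}\prod_{j\ne k}\rho(a_j,a_k) = 1$, which is exactly thinness.

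I expect the main obstacle to be the bookkeeping in the triangle-inequality estimate for $\rho(z_k,\ZTH)$: one must choose the auxiliary point $z_k$ and the super-separation parameter $\delta'$ in the right order so that simultaneously (i) $z_k$ is close enough to $a_k$ that $\Theta_k(z_k)$ approximates $\Theta_k(a_k)$, (ii) $z_k$ is far enough from $a_k$ in the $\rho$-metric that one can still apply $\rho(z_k,\ZTH)\ge 1-\delta$ usefully, and (iii) the super-separation of the remaining zeros guarantees $\rho(z_k,a_j)\ge 1-\delta$ for all $j\ne k$ with $j$ large, while the finitely many small-index zeros $a_j$ with $j\le N$ are handled separately (they contribute only finitely many factors, each tending to $1$ as $k\to\infty$ since $a_k$ must leave every compact subset of $\D$). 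Organizing these three constraints via the formula $d_H(z_k,a_j)\ge d_H(a_k,a_j) - d_H(a_k,z_k)$ in the hyperbolic metric, where the subtracted term is a fixed bounded quantity, makes the argument transparent. Once that estimate is in place the rest is immediate from Theorem~\ref{motiv}.
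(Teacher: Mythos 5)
Your overall skeleton (use the MAG reformulation of the SIP, use super-separation to find points at controlled pseudohyperbolic distance from $a_k$ but far from the whole zero set, and divide out the factor $b_{a_k}$) is the same as the paper's, but the final step is a genuine gap. The estimate $|\Theta(z_k)|\ge 1-\varepsilon$ is only available at points with $\rho(z_k,\ZTH)\ge 1-\delta$, and since $a_k$ itself lies in $\ZTH$, any admissible $z_k$ satisfies $\rho(z_k,a_k)\ge 1-\delta$, i.e.\ it stays at hyperbolic distance at least $\tanh^{-1}(1-\delta)$ from $a_k$; you cannot ``let $z_k\to a_k$'' while keeping the estimate (indeed $|\Theta(z_k)|\to 0$ there). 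The Euclidean distance $|z_k-a_k|$ does tend to $0$ as $k\to\infty$, but plain continuity of $\Theta_k$ gives no uniform control at that scale: from a bound at a single point with $\rho(z_k,a_k)\approx 1-\delta$, Schwarz--Pick only yields $|\Theta_k(a_k)|\ge \bigl((1-\varepsilon)-(1-\delta)\bigr)/\bigl(1-(1-\delta)(1-\varepsilon)\bigr)$, which is nowhere near $1$. The missing ingredient is the minimum modulus principle: super-separation guarantees that for large $k$ the disc $D_\rho(a_k,1-\delta)$ contains no other zero (in fact is at pseudohyperbolic distance at least $1-\delta$ from the rest of $\ZTH$), so $\Theta/b_{a_k}$ is zero-free on that disc and, by your triangle-inequality computation, bounded below by $1-\varepsilon$ on its whole boundary circle; the minimum principle then gives $(1-|a_k|^2)|\Theta'(a_k)|=|(\Theta/b_{a_k})(a_k)|\ge 1-\varepsilon$. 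This is exactly how the paper concludes, and without it your limiting argument does not close.

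There is a second gap: you assert that $\Theta$ is a Blaschke product because a super-separated sequence is ``a finite union of separated sequences, hence Blaschke''. Separated sequences need not satisfy the Blaschke condition; the Blaschke condition for the zeros is automatic because $\Theta$ is inner, but the real issue is that $\Theta$ could carry a nontrivial singular inner factor, which your decomposition $\Theta=b_{a_k}\Theta_k$ silently excludes. The SIP alone does not rule this out (see Theorem~\ref{sipentropy}, where $S_\mu B_1B_2$ has the SIP), so super-separation must be used: as in the paper, the sparsity of the zeros gives on every radius a sequence $r_n\to 1$ with $\rho(r_ne^{i\theta},\ZTH)\to 1$, hence $|\Theta(r_ne^{i\theta})|\to 1$ by Theorem~\ref{motiv}, which is incompatible with a nontrivial singular factor. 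Even after repairing the first step, your argument as written only shows that the Blaschke factor over $\{a_k\}$ is thin, not that $\Theta$ equals it.
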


\begin{proof}
The assumptions imply that for any $\theta\in \R$ one can find a sequence $r_n \to 1$
such that $|\Theta(r_ne^{i\theta})| \to 1$. Hence $\Theta$ cannot have a non-trivial singular inner factor.
Now we need to show that $(1-|a_k|) \Theta'(a_k)|$ can be made arbitrarily close to $1$ when $k$
is large enough. Here $\{a_k \}$ denotes the sequence of zeros of $\Theta$. Fix $0 < \eps < 1$. By the SIP, there exists $0 < \delta < 1$ such that 
$|\Theta(z)|\ge 1-\eps$ whenever $\rho(z, \{a_k \} ) \ge 1-\delta$. Pick an integer $N=N(\delta) >0$ large enough so that   
for $k\ge N$, we have that $D_\rho (a_k, 1-\delta)$
is at $\rho$-distance at least $1-\delta$ from any other zero of $\Theta$, so that $|\Theta(z)/\varphi_{a_k}(z)| \ge |\Theta(z)| \ge 1-\eps$ for any $z \in \partial D_\rho (a_k, 1-\delta)$. Since $\Theta(z)/\varphi_{a_k}$ is zero-free on the disc $D_{\rho} (a_k , 1 - \delta)$, we obtain that $(1-|a_k|^2) \Theta'(a_k)|\ge 1-\eps$
by applying the minimum modulus principle to the function $\Theta / \varphi_{a_k}$ at $z=a_k$.
\end{proof}

We will see that a divisor of a SIP Blaschke product is not, in general, SIP; however Blaschke products with a super-separated zero sequence
(and so in particular thin Blaschke products) are in a sense negligible elements for the SIP. In other words an inner function can be multiplied
or divided by such a Blaschke product without changing its status with respect to the SIP.

\begin{proposition}
\label{unionsupersep}
Let $\Theta_1$ and $\Theta_2$ be two inner functions. Assume that $\Theta=\Theta_1 \Theta_2$  satisfies the SIP and that $\mathcal Z(\Theta_1)$ is finite or super-separated, then $\Theta_2$ satisfies the SIP.
\end{proposition}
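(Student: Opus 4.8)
The plan is to use Theorem~\ref{motiv} to trade the SIP for MAG: it suffices to show that if $\Theta=\Theta_1\Theta_2$ has MAG and $\mathcal Z(\Theta_1)$ is finite or super-separated, then $\Theta_2$ has MAG. A first reduction removes the singular factor of $\Theta_1$: writing $\Theta_1=B_1S_1$ with $B_1$ a Blaschke product and $S_1$ singular, the function $\Theta/S_1=B_1\Theta_2$ has the same zero set as $\Theta$ and modulus at least $|\Theta|$, hence $\eta_{\Theta/S_1}\ge\eta_\Theta$ and $\Theta/S_1$ has MAG; so we may assume $\Theta_1=B_1$ is a Blaschke product, and moreover that $|\Theta_2|\ge|\Theta|$ pointwise since $\Theta=B_1\Theta_2$ with $|B_1|\le 1$.

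The heart is a dichotomy for a point $z$ with $d_H(z,\mathcal Z(\Theta_2))$ large. If $z$ is also hyperbolically far from $\mathcal Z(B_1)$, then $d_H(z,\mathcal Z(\Theta))$ is large and $|\Theta_2(z)|\ge|\Theta(z)|\ge\eta_\Theta(\cdot)$ is close to $1$ directly from the MAG of $\Theta$. The substantial case is $z$ near a zero of $B_1$, and here is where finiteness or super-separation enters. Fix $\varepsilon>0$ and a large threshold $L$; let $F$ be the (finite) set of zeros of $B_1$ whose nearest neighbour in $\mathcal Z(B_1)$ lies at hyperbolic distance $<L$, put $B':=\prod_{a\in F}b_a^{m_a}$ (a finite Blaschke product) and $\Theta'':=\Theta/B'=B_1''\Theta_2$ with $B_1'':=B_1/B'$. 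Then every zero $a$ of $B_1''$ has all other zeros of $B_1''$ at hyperbolic distance $\ge L$, so for $z$ near such an $a$ the only zero of $\Theta''$ inside $D_H(a,L/2)$ is $a$; hence $\Phi:=\Theta''/b_a^{m_a}$ is zero-free on that disc, $|\Theta_2|\ge|\Phi|$ there (the remaining factors of $B_1''$ have modulus $\le1$, which also absorbs the multiplicity $m_a$), and on the circle $\partial D_H(a,L/4)$ one has $d_H(w,\mathcal Z(\Theta''))=L/4$ — the nearest zero there is forced to be $a$ because $\mathcal Z(\Theta_2)$ and the other zeros of $B_1''$ are far — so $|\Phi(w)|\ge|\Theta''(w)|\ge\eta_{\Theta''}(\tanh(L/4))$. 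The minimum modulus principle applied to $\Phi$ on $D_H(a,L/4)$ then yields $|\Theta_2(z)|\ge\eta_{\Theta''}(\tanh(L/4))$. The finitely many exceptional zeros are handled by the auxiliary fact that dividing a MAG inner function by a finite Blaschke product of degree $d$ yields a MAG inner function, with a quantitative bound $\eta_{\Theta''}(\tanh r)\gtrsim\eta_\Theta(\tanh(r/8))^{C}$ valid for $r\gtrsim\log d$ (proved by the same dichotomy: near one of the $\le d$ removed zeros one finds, inside a suitable hyperbolic disc, a point far from all of them by a packing/counting argument, then transfers the resulting lower bound back to $z$ by Harnack's inequality). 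Feeding this into the previous estimate and choosing $L$ large enough (in terms of $\varepsilon$ and $\Theta$) makes $\eta_{\Theta''}(\tanh(L/4))\ge 1-\varepsilon$, and combining with the far case shows $\Theta_2$ has MAG, hence the SIP.

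The step I expect to be the main obstacle is precisely the near-a-zero case together with the exceptional set: one must coordinate the hyperbolic scales so that the disc on which $\Phi$ is zero-free is large enough for $\eta_{\Theta''}$ to be close to $1$ at its scale, while at the same time the threshold $L$ — and hence $F$ and the degree $d=d(L)$ of the finite Blaschke product $B'$ that is divided out — does not outgrow the range of scales in which the ``divide by a finite Blaschke product'' estimate is usable. Checking that the finiteness or super-separation hypothesis is exactly what permits such a choice (and that no weaker hypothesis will do — for a general Carleson--Newman Blaschke product divisors need not have the SIP) is the delicate point; the rest of the bookkeeping, including multiplicities of zeros, is routine once the scales are fixed.
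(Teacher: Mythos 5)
Your overall frame is sound and close to the paper's: reduce SIP to MAG via Theorem~\ref{motiv}, strip the singular factor of $\Theta_1$, and run the dichotomy in which either $z$ is hyperbolically far from $\mathcal Z(B_1)$ (then $|\Theta_2(z)|\ge|\Theta(z)|\ge\eta_\Theta(\cdot)$ directly), or $z$ sits near a well-isolated zero $a$ and one applies the minimum modulus principle to $\Theta''/b_a^{m_a}$ on a circle around it. The genuine gap is exactly in the step you flag: the handling of the exceptional zeros. Your auxiliary claim -- that dividing a MAG inner function by a finite Blaschke product of degree $d$ yields $\eta_{\Theta''}(\tanh r)\gtrsim\eta_\Theta(\tanh(r/8))^{C}$ for $r\gtrsim\log d$ -- is not established by ``find one point $w$ far from the removed zeros and transfer back to $z$ by Harnack''. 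For the positive harmonic function $-\log|\Theta''|$ on $D_H(z,r)$, Harnack across hyperbolic distance $s=d_H(z,w)$ costs a factor comparable to $e^{2s}$, and your packing argument forces $s\gtrsim \sigma+\log d$ (in your scheme $s\sim r/4$). Since MAG provides no rate for $1-\eta_\Theta(t)\to 0$, the resulting bound $-\log|\Theta''(z)|\lesssim e^{2s}\bigl(-\log\eta_\Theta(\tanh\sigma)\bigr)$ need not tend to zero; to make it small you would need an exponential decay rate for $-\log\eta_\Theta$, far stronger than MAG. A single good point is not enough; what does work is a whole circle around $z$ on which $|\Theta|$ itself is bounded below by $\eta_\Theta(\tanh t)$, so that the minimum principle transfers the bound with no loss. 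Moreover, even granting your auxiliary claim, its hypothesis fails in your application: with $F=\{\text{zeros of }B_1\text{ having a neighbour within }L\}$, the degree $d(L)$ of $B'$ can be of order $e^{cL}$ (exponentially many pairwise $L$-separated points of the super-separated tail may lie within distance $L$ of one of the finitely many early zeros), so $\log d(L)\sim L$ is not $\lesssim r=L/4$, and the scales cannot be coordinated as proposed.

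For comparison, the paper avoids any quantitative ``divide by a degree-$d$ Blaschke product'' lemma. It first proves the single-zero removal inequality $\eta_{\Theta/b_{a_1}}(\tanh 3t)\ge\eta_\Theta(\tanh t)$ purely by the minimum principle on $\partial D_H(z,2t)$ (every point of that circle is at distance $\ge t$ both from $a_1$ and from $\mathcal Z(\Theta_2)$), and iterates this over the finitely many exceptional zeros. In the super-separated case it does not take ``zeros with a close neighbour'' as the exceptional set; for each scale $t$ it removes only the first $N(t)$ zeros furnished by the definition of super-separation, so that the remaining zeros are pairwise $4t$-separated, and then runs the same $t,2t,3t$ dichotomy to get $\eta_{\Theta_2}(\tanh 3t)\ge\eta_{\tilde\Theta}(\tanh t)$ with $\tilde\Theta=\Theta\prod_{j\le N}b_j^{-1}$. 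If you want to repair your argument, replace the packing-plus-Harnack transfer by this circle/minimum-principle mechanism and index the exceptional set by the separation scale rather than by a neighbour-distance threshold.
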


\begin{proof}
 Assume first that $\mathcal Z(\Theta_1)$ is finite. By induction, we may assume that it only contains a single point $a_1$. It suffices to show that $\eta_{\Theta_2}(\tanh (3t))\ge \eta_{\Theta}(\tanh t)$ for $t>0$. Let $z\in \D$ be such that 
 $$
 d_H(z,\mathcal Z(\Theta_2))\ge 3t. 
 $$
We consider two cases. 
 
If $d_H(z,a_1)\ge t$, then $|\Theta_2(z)|\ge |\Theta(z)|\ge \eta_\Theta(\tanh t)$. 

If $d_H(z,a_1)< t$, then all $w \in \D$ such $d_H(z,w)=2t$ satisfy the relation $d_H(w,\ZTH)\ge t$, thus $|\Theta_2(w)|\ge |\Theta(w)|\ge \eta_\Theta(\tanh t)$ and since $\Theta_2$ does not have zeros  on  $D_H(z,2t)$ we also have $|\Theta_2(z)|\ge \eta_\Theta(\tanh t)$.  
  
Now we assume that $\mathcal Z(\Theta_1)=\{a_j\}$  is super-separated.
 
Let  $t>0$ and $z\in \D$ be such that $d_H(z,\mathcal Z(\Theta_2))\ge 3t$. There exists $N\in \N$ such that $d_H(a_k, a_j)\ge 4t$ for all $j>k> N$. Applying  the previous case, we see that $\tilde \Theta:=\Theta \prod_{j\le N} b_j^{-1}$, where 
$$
b_j(z):= \frac{\bar a_j}{|a_j|} \frac{a_j-z}{1-\bar a_j z}, \quad z \in \D , 
$$ 
satisfies the SIP.

If  $d_H(z,a_j)\ge t$ for all $j>N$, then $|\Theta_2(z)|\ge|\tilde \Theta(z)|\ge \eta_{\tilde \Theta}(\tanh t)$.\\
Assume that  $d_H(z,a_j)< t$ for some $j>N$, then  all $w$ such that $d_H(z,w)= 2t$ satisfy  $d_H(w, \mathcal Z(\tilde \Theta)) \ge t$. We conclude like in the previous case that 
$\eta_{\Theta_2}(\tanh (3t))\ge \eta_{\tilde\Theta}(\tanh t)$.
\end{proof}

\begin{corollary}
Let $\Theta_j$, $1\le j\le N$, be inner functions, each one with super-separated zeros and let the product  $\prod_{1\le j\le N}\Theta_j$ satisfy the {\rm SIP}. Then each $\Theta_j$ is a thin Blaschke product.
\end{corollary}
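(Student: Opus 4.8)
The plan is to combine the two previous results, namely Proposition~\ref{unionsupersep} and Proposition~\ref{supersep}, in a straightforward inductive argument. First I would observe that the statement is symmetric in the factors, so it suffices to prove that $\Theta_N$ is a thin Blaschke product; the conclusion for the remaining indices will follow by relabeling. Write $\Theta = \Theta_N \cdot \Phi$ where $\Phi := \prod_{1\le j\le N-1}\Theta_j$. Each $\Theta_j$ has super-separated zeros, hence in particular each is a Blaschke product (a super-separated sequence satisfies the Blaschke condition and there is no singular part, since super-separation forces $|\Theta_j(r e^{i\theta})|\to 1$ along suitable radii, as used in the proof of Proposition~\ref{supersep}). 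Therefore $\Phi$ is a Blaschke product whose zero set is a finite union of super-separated sequences.

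The key step is to check that one may apply Proposition~\ref{unionsupersep} repeatedly to peel off the factors $\Theta_1,\dots,\Theta_{N-1}$ one at a time. Starting from $\Theta = \Theta_1\cdot(\Theta_2\cdots\Theta_N)$ with $\mathcal Z(\Theta_1)$ super-separated and $\Theta$ satisfying the SIP, Proposition~\ref{unionsupersep} gives that $\Theta_2\cdots\Theta_N$ has the SIP. Iterating, after $N-1$ steps we conclude that $\Theta_N$ itself satisfies the SIP. Now $\Theta_N$ is an inner function with super-separated zeros satisfying the SIP, so Proposition~\ref{supersep} applies directly and yields that $\Theta_N$ is a thin Blaschke product. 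Repeating the whole argument with the roles of the indices permuted (or simply noting that the hypotheses are unchanged under permutation of the factors) gives that each $\Theta_j$ is thin.

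I do not expect any genuine obstacle here: the corollary is essentially a formal consequence of the two preceding propositions, and the only mild point to be careful about is the bookkeeping in the induction — specifically, ensuring at each stage that the factor being removed has super-separated (or finite) zero set, which is immediate since it is one of the original $\Theta_j$, and that the remaining product is still an inner function to which the SIP hypothesis has been transferred by the previous step. One could also remark, for completeness, that the finite-union-of-super-separated hypothesis on $\Phi$ is not actually needed in this form: Proposition~\ref{unionsupersep} only ever requires the \emph{removed} factor to be super-separated, and the residual factor is allowed to be an arbitrary inner function. Thus the argument is clean and short, and the substance of the corollary lies entirely in Propositions~\ref{unionsupersep} and \ref{supersep}.
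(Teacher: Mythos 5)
Your argument is correct and is essentially the paper's own proof: peel off the super-separated factors one at a time via Proposition~\ref{unionsupersep} to transfer the SIP to each $\Theta_j$, then invoke Proposition~\ref{supersep}. One small caveat: your parenthetical claim that super-separation of the zeros alone forces each $\Theta_j$ to be a Blaschke product (no singular part) is not justified without the SIP --- in Proposition~\ref{supersep} the exclusion of a singular factor uses the SIP --- but this remark is never used in your induction, so the proof stands as written.
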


\begin{proof}
By induction  and  Proposition  \ref{unionsupersep}, we show that each $\Theta_j$ satisfies the SIP. Then, we apply Proposition \ref{supersep}  to deduce that each $\Theta_j$ is a thin Blaschke product. 
\end{proof}

\subsection{Carleson--Newman Blaschke products, the WEP, and the SIP}

\begin{proposition}
\label{wepsipnotCN}
There exist Blaschke products satisfying both the SIP and the WEP which are not Carleson--Newman.
\end{proposition}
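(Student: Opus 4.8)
The plan is to build a Blaschke product $\Theta$ whose zero set is a disjoint union of ``clusters'', where the $n$-th cluster is dense enough (as $n\to\infty$) that $\Theta$ fails to be Carleson--Newman, yet the clusters are spread far enough apart in the hyperbolic metric that $\Theta$ still has Maximal Asymptotic Growth (hence the SIP by Theorem~\ref{motiv}) and $\eta_\Theta(t)>0$ for all $t>0$ (hence the WEP by the Gorkin--Mortini--Nikolski criterion). Concretely, I would fix a sequence $\theta_n\in\R$ of well-separated angles and, for each $n$, place inside a Carleson box $Q(\theta_n,h_n,1)$ (with $h_n\to 0$ chosen so the boxes have pairwise hyperbolically divergent distances) a finite ``fan'' of $N_n$ zeros that realizes $\mu_n(Q(\theta_n,h_n,1))\ge n\,h_n$, so that the global measure $\mu=\sum_j(1-|a_j|)\delta_{a_j}$ is not Carleson; this forces $\Theta\notin(CN)$.

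The second step is to check the SIP. The clusters being finite and hyperbolically isolated, there is a hyperbolic distance $R_0$ beyond which any point $z$ with $d_H(z,\ZTH)\ge R_0$ sees at most one cluster at bounded hyperbolic distance, and that cluster is finite; applying the fact that any \emph{finite} Blaschke product $B$ satisfies $|B(z)|\ge \eta(d_H(z,B^{-1}\{0\}))\to 1$ together with the standard estimate that zeros in the other clusters contribute $-\log|\Theta|\lesssim \sum_{k}\sum_{a\in \text{cluster }k}e^{-2d_H(a,z)}$, which is small because the inter-cluster distances grow, one gets $\lim_{t\to1}\eta_\Theta(t)=1$. This is precisely the MAG condition, and Theorem~\ref{motiv} then yields the SIP. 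I would organize the estimate exactly as in the proof of Proposition~\ref{sufficient}: split the zeros into those within hyperbolic distance $R_0$ of $z$ (a single finite cluster, handled by the finite-Blaschke-product minimum principle) and the far ones (handled by a summable tail, using that $\sum_n \operatorname{diam}_H(\text{cluster }n)\,e^{-2\,d_H(\text{cluster }n,\,z)}$ can be made as small as desired by shrinking the $h_n$ and speeding up the separation).

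The third step is the WEP, i.e.\ $\eta_\Theta(t)>0$ for every fixed $t\in(0,1)$. For fixed $t$, a point $z$ at hyperbolic distance $\ge \tanh^{-1}t$ from $\ZTH$ again sees only finitely many clusters at distance $\le$ some $R(t)$, each finite, so $|\Theta(z)|$ is bounded below by a positive constant depending on $t$ via the same decomposition; no cancellation is lost because a finite product of finite Blaschke products is a finite Blaschke product, which is bounded away from $0$ off a neighborhood of its zeros. Thus $\Theta$ satisfies the WEP by \cite{GMN}. Combining the three steps gives a Blaschke product that is simultaneously SIP, WEP, and not Carleson--Newman, which is the assertion.

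\textbf{Main obstacle.} The delicate point is to reconcile the two competing demands on the clusters: each cluster must be ``heavy'' enough ($\mu_n(Q(\theta_n,h_n,1))/h_n\to\infty$) to destroy the Carleson property, while the whole configuration must still have MAG, which requires the hyperbolic \emph{diameter} of each cluster to be uniformly bounded (or at least to grow slowly) and the inter-cluster hyperbolic gaps to grow fast. The tension is real because packing $N_n\to\infty$ zeros into a box of fixed hyperbolic diameter makes that box far from narrow, which by Theorem~\ref{narrowchar} would kill the SIP; the resolution is to let the hyperbolic diameter of the $n$-th cluster grow \emph{slowly} (say like $\log n$) while keeping each cluster a finite set, since finiteness is what makes the minimum principle applicable regardless of how many points there are, and then choosing $h_n$ and $\theta_n$ so that $\sum_n (\log n)\,e^{-2 d_H(\text{cluster }n,\text{cluster }m)}$ converges rapidly. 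Getting these bookkeeping inequalities to close simultaneously — non-Carleson on one hand, MAG and WEP on the other — is the crux of the construction.
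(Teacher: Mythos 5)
Your strategy has a genuine gap, and it is exactly at the point you flag as the ``main obstacle'': the tension between heavy clusters and MAG cannot be resolved by taking the clusters finite and hyperbolically far apart. Suppose the $n$-th cluster $C_n$ sits in $Q(\theta_n,h_n,1)$ with mass $\sum_{a\in C_n}(1-|a|)\ge n\,h_n$, and fix $d>0$. Let $z_n$ be the point on the radius through $e^{i\theta_n}$ with $1-|z_n|=e^{2d}h_n$. For every $a\in Q(\theta_n,h_n,1)$ one has $|1-\bar a z_n|\simeq e^{2d}h_n$, hence
\[
\log\frac{1}{|B(z_n)|}\;\ge\;\frac12\sum_{a\in C_n}\bigl(1-\rho(z_n,a)^2\bigr)\;\gtrsim\;e^{-2d}\sum_{a\in C_n}\frac{1-|a|}{h_n}\;\ge\;c\,n\,e^{-2d},
\]
while $1-\rho(z_n,a)^2\lesssim e^{-2d}$ gives $d_H(z_n,C_n)\gtrsim d$. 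Since you make the inter-cluster hyperbolic gaps grow, for each fixed $d$ the point $z_n$ is (for large $n$) also far from all the other clusters, so $\rho(z_n,\mathcal Z(B))\ge\tanh d - o(1)$ yet $|B(z_n)|\le e^{-c n e^{-2d}}\to 0$. Thus $\eta_B(t)=0$ for every $t\in(0,1)$: your configuration fails not only MAG (hence the SIP, by Theorem~\ref{motiv}) but even the WEP. Note that this computation uses nothing about how the zeros are arranged inside the box, only the mass ratio, so letting the cluster diameter grow like $\log n$ does not help; and the assertion that ``finiteness makes the minimum principle applicable regardless of how many points there are'' is precisely where uniformity is lost --- the lower bound a finite Blaschke product gives off its zeros degrades with the number (Blaschke mass) of zeros, and the SIP requires a single $\delta(\eps)$ working for all clusters at once.

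The paper avoids this by going in the opposite direction (Treil's example from \cite{GMN}, in the upper half plane): the zeros form \emph{infinite} horizontal rows $z_{n,k}=k\delta_n y_n+iy_n$, $k\in\Z$, at heights $y_n=n^3$, with $\delta_n\downarrow 0$ and $\sum_n(\delta_n y_n)^{-1}<\infty$. Each row is so dense relative to its Carleson scale ($1/\delta_n\to\infty$) that $B\notin(CN)$, but because the rows are horizontally unbounded and their heights are hyperbolically ever closer, the ``bad'' points your construction leaves exposed do not exist: any $z$ with $\rho(z,\mathcal Z(B))$ close to $1$ must have $\Im z\le\eps$, and there a Riemann-sum estimate gives $\log|B(z)|^{-1}\lesssim \Im z\sum_n(\delta_n y_n)^{-1}\lesssim\eps$, which is MAG; the WEP for this $B$ is proved in \cite{GMN}. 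So the density that destroys the Carleson condition must be ``shielded'' by the rest of the zero set rather than isolated, which is the idea your proposal is missing.
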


\begin{proof}
This construction is due to S.Treil as published in \cite{GMN}.  Once again, we carry out 
the computations in the upper half plane $\mathbb H$.  Recall that for $z,w \in \mathbb H$, 
\[
1-\rho(z,w)^2 = \frac{4 \Im z \Im w}{(\Re z-\Re w)^2 + (\Im z+ \Im w)^2}.
\]

Set $y_n=n^3$, $n\ge 1$, and choose %Choose an increasing sequence of real numbers $\{y_n \}$ with $y_0=1$ and $y_{n+1}/y_n \to 1$ as $n\to\infty$, and 
a decreasing sequence $\{\delta_n\}$, with $\delta_1=1$ and $\delta_n\to 0$, 
such that $\sum_n (\delta_n y_n)^{-1}<\infty$. This ensures that $z_{n,k}:= k\delta_n y_n + i y_n$, $n \in \N$, $k\in \Z$, defines a Blaschke sequence in $\mathbb H$.
Let $B$ denote the associated Blaschke product. It is proved in \cite{GMN} that $B$ satisfies the WEP, and it is obvious that $B$ is not Carleson--Newman.

It is easy to see that there exists $\eps_0>0$ such that 
$d_H (z, \mathcal Z(B))\ge 1-\eps_0$ implies that $\Im z \le \frac14$. Fix $0 < \eps < \eps_0$. Suppose that $z=x+iy$,
$0<y\le \frac14$, and that $\min_{n,k} \rho(z,z_{n,k}) \ge 1-\eps$. If we choose $k_0\in \Z$ such that $|x-k_0|\le \frac12$, we have  
\[
2\eps \ge 1-\rho(z,z_{0,k_0})^2 \ge \frac{4y}{\frac14 + (y+1)^2} \ge 2y,
\]
so $y\le \eps$. Using Riemann sums, we then can estimate
\begin{multline}
\log |B(z)|^{-1} \lesssim \sum_{n\ge 0, k\in \Z} 1-\rho(z,z_{n,k})^2
\\
\lesssim 
\sum_{n\ge 0} 4 y y_n \sum_{k\in \Z} \frac1{(x- k \delta_n y_n)^2 +y_n^2}
\lesssim
y \sum_n \frac1{\delta_n y_n} \lesssim \eps,
\label{estdo4}
\end{multline}
so $B$ satisfies the SIP.
\end{proof}

\begin{corollary} There exist a Blaschke product satisfying the {\rm SIP} and not satisfying the {\rm WEP}.
\end{corollary}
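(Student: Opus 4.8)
The plan is to exhibit a single Blaschke product that satisfies the SIP but fails the WEP. The natural candidate is to reuse the Treil construction $B$ from Proposition~\ref{wepsipnotCN}, but modify it so that the WEP is destroyed while the SIP survives. By the Gorkin--Mortini--Nikolski characterization recalled in the introduction, $B$ fails the WEP precisely when $\eta_B(t)=0$ for some $t>0$; by Theorem~\ref{motiv} and the MAG property, $B$ has the SIP precisely when $\eta_B(t)\to 1$ as $t\to 1$. So the goal is a Blaschke product whose function $\eta$ vanishes on an initial segment $(0,c]$ but tends to $1$ at the endpoint $t=1$.

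First I would take the product of the Treil example $B$ with a ``bad'' finite-free ingredient designed to force $\eta=0$ on a nondegenerate interval, using the result of Nikolski--Vasyunin quoted in the introduction: for any $0<c<1$ there exists a Blaschke product $\Theta_c$ with $\eta_{\Theta_c}(t)=0$ if and only if $0\le t\le c$. If we could simply multiply $\Theta_c$ by a SIP Blaschke product and retain the SIP, we would be done; but SIP (equivalently MAG) is \emph{not} stable under arbitrary products, so this requires care. The cleaner route is to place a ``clustered'' family of extra zeros inside the Treil example itself — for instance, for each $n$ adjoin to the row at height $y_n$ a bounded cluster of zeros forming a fixed pattern (rescaled by $y_n$) that is close together pseudohyperbolically, so that near those clusters $|B|$ is bounded away from $1$ at a definite pseudohyperbolic scale, giving $\eta_{\tilde B}(t)=0$ for all $t$ below some absolute $c>0$. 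The key point is that these clusters can be kept uniformly far from each other (their number in each row being bounded, their heights $y_n=n^3$ spreading out) so that the Riemann-sum estimate \eqref{estdo4} still applies: away from the zero set at pseudohyperbolic distance close to $1$, one still has $\log|\tilde B(z)|^{-1}\lesssim \eps$, hence $\tilde B$ has the MAG and therefore the SIP by Theorem~\ref{motiv}.

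The main obstacle is exactly this tension: one must insert enough extra zeros to guarantee $\eta_{\tilde B}(t)=0$ on an interval $(0,c]$ (a ``local'' failure of separation), while keeping the zero set globally sparse enough that the growth of $|\tilde B|$ away from the \emph{whole} zero set is still maximal as $t\to 1$ (a ``global'' MAG condition). Concretely, the verification splits into two parts, both modeled on the proof of Proposition~\ref{wepsipnotCN}: (i) showing the modified sequence is still Blaschke and that the cluster structure forces $\inf\{|\tilde B(z)|:\rho(z,\mathcal Z(\tilde B))\ge t\}=0$ for $t$ up to some fixed $c$; and (ii) re-running the Riemann-sum bound with the clusters treated as a single perturbation per row, checking that $y\sum_n (\delta_n y_n)^{-1}$-type control is unaffected by adding $O(1)$ zeros per height. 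Once both hold, $\tilde B$ has the SIP and $\eta_{\tilde B}(t)=0$ for $0<t\le c$, so $\tilde B$ does not have the WEP, which proves the corollary.
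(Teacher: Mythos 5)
Your strategy (modify Treil's example so that WEP fails while the Riemann--sum estimate \eqref{estdo4} still gives MAG) is reasonable in spirit, but the concrete mechanism you propose does not destroy the WEP, and this is a genuine gap. Failing the WEP means $\eta_{\tilde B}(t)=0$ for some $t>0$, i.e.\ there must be points $z_k$ with $\rho(z_k,\mathcal Z(\tilde B))\ge t$ and $|\tilde B(z_k)|\to 0$; it is not enough that $|\tilde B|$ be ``bounded away from $1$'' at a definite pseudohyperbolic scale. If each inserted cluster is a fixed pattern with a bounded number $N$ of zeros, then at any point $z$ with $\rho(z,\mathcal Z(\tilde B))\ge t$ the cluster nearest to $z$ contributes a factor at least $t^{N}$, the other clusters (uniformly separated, with summable Blaschke mass) contribute a factor bounded below, and the ambient Treil product $B$ contributes at least $\eta_B(t)>0$ because $B$ has the WEP. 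Hence $\eta_{\tilde B}(t)\ge c\,t^{N}>0$ for every $t$, so $\tilde B$ still has the WEP and the corollary is not proved. To kill the WEP you are forced to let the number of zeros per cluster tend to infinity while keeping a fixed gap around the test points; but then your step (ii), which rests on ``adding $O(1)$ zeros per height'', no longer applies, and you would have to redo the estimate with a quantitative condition on the cluster sizes (something like $\sum_n M_n/(\delta_n y_n)<\infty$) together with the geometric fact that points pseudohyperbolically far from the \emph{whole} zero set have small imaginary part, so that the unboundedly large clusters are shielded by the ambient rows. None of this is carried out in the proposal, and as written the construction proves nothing.

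For comparison, the paper goes in the opposite direction and avoids the tension entirely: it \emph{removes} zeros from the Treil product $B$ of Proposition~\ref{wepsipnotCN}, namely all zeros lying in the hyperbolic discs $D_H(8^n i,1)$, $n\ge 1$. The centers $8^n i$ are then at a fixed hyperbolic distance from the remaining zero set, while the annuli $D_H(8^n i,2)\setminus D_H(8^n i,1)$ contain an increasing number of surviving zeros (since $\delta_n\to 0$ the rows become hyperbolically denser), so $|B_1(8^n i)|\to 0$ and the WEP fails. The SIP survives essentially for free: removing zeros only increases the modulus, so $|B_1|\ge|B|$, and any point pseudohyperbolically far enough from $\mathcal Z(B_1)$ still has $\Im z\le \tfrac14$, where \eqref{estdo4} applies. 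If you want to salvage an ``addition'' version, you must adopt the same two ingredients: unboundedly many zeros per cluster to break WEP, and a summability/shielding argument to keep MAG.
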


\begin{proof} We just take the Blaschke product $B$ from Proposition~\ref{wepsipnotCN} and remove its zeros in the hyperbolic discs $D_H(8^ni,1)$, $n\ge 1$. 
Then the remaining Blaschke product $B_1$ does not satisfy the WEP because the discs $D_H(8^ni, 2)$ contain an increasing number of zeroes, while their centers $8^ni$ are at a fixed distance from the zero set.  Next, $B_1$ satisfies the SIP because all points which are at 
pseudohyperbolic distance more than %$\frac12$ 
$1-\eps_0'$ of the zero set, for some $\eps_0'<
\min(\eps_0,1-\tanh 2)$ 
must verify $\Im z \le \frac14$, and 
we just use estimate \eqref{estdo4} and the fact that $|B_1|\ge |B|$.
\end{proof}

Next we provide examples of Interpolating Blaschke products which do  not satisfy the SIP. Our examples are very close to those in the proof of Theorem 5 in \cite{Bo}. 

\begin{proposition}
\label{notstar}
There exist interpolating Blaschke products (which satisfy therefore the {\rm WEP}) which 
do not satisfy the {\rm SIP}.
\end{proposition}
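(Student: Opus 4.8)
The plan is to construct an interpolating Blaschke product $B$ whose zeros form a sequence $\{a_k\}$ that is uniformly separated (so $B$ is interpolating, hence satisfies the WEP), but for which the tail sums $S_t$ do not tend to $0$ uniformly as $t\to 1$; by Theorem \ref{motiv} (equivalently Theorem \ref{narrowchar}) this will force the SIP to fail. The mechanism, as in \cite{Bo}, is to place the zeros along the positive real axis in $\D$ (or along a radius) in clusters: near a point $w_n\to 1$ we put $N_n$ zeros, mutually separated by a fixed pseudohyperbolic distance $\sigma\in(0,1)$ so that the whole configuration stays uniformly separated, but with $N_n\to\infty$. The point is that a point $z_n$ sitting just "beyond" the $n$-th cluster, at pseudohyperbolic distance $\ge 1-\delta_n$ from the entire zero set (with $\delta_n\to 0$, achievable by spacing the clusters far apart hyperbolically and letting the radial positions tend to $1$ fast enough), will still see $N_n$ zeros each contributing a non-negligible amount, so that $\sum_{j:\rho(z_n,a_j)\ge t}(1-\rho(z_n,a_j))$ stays bounded below by a fixed constant for all $t<1$, giving $|B(z_n)|$ bounded away from $1$ while $\rho(z_n,\mathcal Z(B))\to 1$. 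That is precisely the negation of MAG.

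Concretely, I would work in the upper half-plane $\mathbb H$ as the authors do elsewhere. Fix a separation constant, say take the $n$-th block of zeros to be $a_{n,k}= x_n + i h_n\cdot 2^{k}$ for $1\le k\le N_n$ (a "geometric tower" over the base point $x_n+ih_n$), or more simply points along a hyperbolic geodesic segment of hyperbolic length $\asymp N_n$ with consecutive gaps $\asymp 1$; choose the imaginary scales $h_n\to 0$ and the horizontal positions $x_n$ so that distinct blocks are hyperbolically far apart, which makes the union uniformly separated with a separation constant controlled by the within-block gap. The key quantitative check is twofold: (i) verify the uniform separation constant $\inf_{j\ne k}\rho(a_j,a_k)>0$, which reduces to a routine estimate since within a block the gap is a fixed $\sigma$ and between blocks the hyperbolic distance is large; (ii) produce the test points $z_n$ — take $z_n$ to be the "top" of the $n$-th tower pushed up by one more hyperbolic unit, i.e. just above the last zero of the $n$-th block — and show $d_H(z_n,\mathcal Z(B))\ge c\,N_n$ or at least $\to\infty$ (using that the block is a tower, the nearest zero is the top one of that block, at distance $\asymp 1$... so instead I should take $z_n$ at hyperbolic distance $\log N_n$ above the block so that it is far from all zeros of block $n$ yet $\log|B(z_n)|^{-1}\gtrsim \sum_{k} e^{-2 d_H(z_n,a_{n,k})}$ still has $\gtrsim \sum_{k\le N_n} e^{-2\log N_n - 2k}\cdot$(correction) — one arranges the tower spacing and the height of $z_n$ so this geometric-type sum is bounded below by a constant independent of $n$). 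The arithmetic here is elementary but must be balanced carefully; I would follow the \cite{Bo}, Theorem 5 computation closely.

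The main obstacle I anticipate is the simultaneous balancing of three requirements that pull against each other: (a) uniform separation of $\{a_k\}$ forces the within-block zeros apart, (b) $d_H(z_n,\mathcal Z(B))\to\infty$ (equivalently $\rho(z_n,\mathcal Z(B))\to 1$) forces $z_n$ away from all zeros, yet (c) $\log|B(z_n)|^{-1}$, which is $\asymp\sum_j e^{-2d_H(a_j,z_n)}$, must stay bounded below — so $z_n$ cannot be too far from the $n$-th cluster. The resolution is that a cluster of $N_n$ points, all at hyperbolic distance roughly $D_n$ from $z_n$, contributes $\asymp N_n e^{-2D_n}$; choosing $D_n=\tfrac12\log N_n$ makes this $\asymp 1$ while $D_n\to\infty$. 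So the construction is: $n$-th block $=$ $N_n$ uniformly $\sigma$-separated points on a short arc/geodesic, $z_n$ placed at hyperbolic distance $\tfrac12\log N_n$ "above" (away from the accumulation circle relative to) that block, block centers sent to $\partial\D$ and mutually separated hyperbolically by $\gg \log N_n$. Then $|B(z_n)|\le e^{-c}<1$ for all $n$ while $\rho(z_n,\mathcal Z(B))\to 1$, so $\sup_{0<t<1}\eta_B(t)$ fails to reach $1$ along the relevant scale — more precisely $\eta_B(t)\not\to 1$ as $t\to1$ — and $B$ fails the MAG, hence by Theorem \ref{motiv} fails the SIP, while uniform separation gives that $B$ is interpolating and so satisfies the WEP. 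I would also double-check that the test points can be chosen with $\rho(z_n,\mathcal Z(B))$ genuinely tending to $1$ and not just bounded below; if a direct choice only gives a lower bound $1-\eps_0$, that still suffices to contradict MAG, which only needs $\limsup_{t\to1}$ considerations, but it is cleaner to push $\delta_n\to 0$ by additionally letting the radial scale $h_n$ decay and increasing inter-block distance, exactly as in the Treil-type examples above.
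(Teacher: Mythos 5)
Your overall strategy is the same as the paper's: build an interpolating Blaschke product out of blocks of $N_n$ separated zeros pushed toward the boundary, and exhibit test points $w_n$ with $\rho(w_n,\mathcal Z(B))\to 1$ but $|B(w_n)|\le c_0<1$, which kills MAG and hence, by Theorem \ref{motiv}, the SIP. The quantitative mechanism you isolate at the end --- $N_n$ zeros all at hyperbolic distance about $\tfrac12\log N_n$ from $w_n$, each contributing $\asymp e^{-2d_H}\asymp 1/N_n$ --- is exactly what the paper's construction realizes: the $n$-th block consists of the $N_n$ equally spaced points at height $L_n/N_n$ over an interval of length $L_n$ (a horocyclic row, not a geodesic), and $w_n$ is taken at height $L_n$ above it.

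The genuine gap is that the concrete configurations you propose do not realize this mechanism. For a vertical geometric tower $a_{n,k}=x_n+ih_n2^k$, or any block strung along a geodesic with gaps bounded below (which separation forces), the distances from a far test point to the block grow essentially linearly in $k$, so $\sum_k e^{-2d_H(z_n,a_{n,k})}$ is dominated by the nearest few zeros: if $D_n:=d_H(z_n,\mathcal Z(B))\to\infty$, the sum is $\lesssim D_n e^{-2D_n}\to 0$, and your own displayed estimate $\sum_{k\le N_n}e^{-2\log N_n-2k}\asymp N_n^{-2}$ already shows this; no choice of ``tower spacing and height of $z_n$'' can make it bounded below. (A tower is an exponential sequence, and by Proposition \ref{Corol1} such blocks are exactly the kind that produce SIP products --- the opposite of what you need.) Likewise, $N_n$ separated points on a geodesic segment have hyperbolic diameter $\asymp N_n$, so they cannot all sit at distance $\asymp\tfrac12\log N_n$ from a single point. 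What does work --- and is the paper's choice --- is a horocyclic arrangement: the circle of $d_H$-radius $R$ has length $\asymp e^{2R}$, so one can place $N_n$ separated zeros all at distance about $\tfrac12\log N_n$ from $w_n$, and the horizontal row does precisely this. Two further corrections: pairwise separation alone does not make $B$ interpolating --- you also need the Carleson measure condition (immediate for the rows over disjoint tripled intervals, but it must be said); and your aside that a fixed bound $\rho(z_n,\mathcal Z(B))\ge 1-\eps_0$ ``still suffices to contradict MAG'' is false, since MAG concerns the limit $t\to1$; you genuinely need $\rho(z_n,\mathcal Z(B))\to 1$, which the choice $D_n=\tfrac12\log N_n\to\infty$ does give.
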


\begin{proof}
For notational convenience, we construct a family of such examples in the upper half plane. For any
interval $I\subset \R$, let $\tilde{I}$ be the interval with the same center and triple the length.

Let $\{L_n \}$
be a decreasing summable sequence of positive numbers, and $I_n := [x_n, x_n +L_n] \subset \R$ a family of
 intervals such that the intervals $\{ \tilde I_n \}$ are pairwise disjoint, $x_n+2L_n \le x_{n+1}$ for any $n$ and all contained in a fixed bounded
 interval.  
 % (which is possible by the summability condition).  
 Pick an increasing sequence of integers $\{N_n\} \to \infty$, and define
\[
z_{n,k}:= x_n + \frac{k L_n}{N_n} + i  \frac{ L_n}{N_n} , \quad 0\le k < N_n.
\]
Let $B$ be the Blaschke product with zeros $\{ z_{n,k}:  0\le k < N_n ,  n\ge 1 \}$.
The Blaschke condition is verified because of the summability assumption of $\{L_n \}$. It is clear that $B$ is an interpolating Blaschke product.

Note that there exists a universal constant $0< c_0 < 1$ such that $|B(z)| \leq c_0$ for any $z \in  Q_n:= \{x+iy: x\in I_n,  L_n / N_n \le y \le  L_n \}$. Pick $w_n \in Q_n$ with $\Im (w_n) = L_n$. Note that $\rho (w_n , \mathcal Z(B)) \to 1$ as $n \to \infty$.  
% and 
% $$
% - \log |B (w_n)| \geq \frac{C}{L_n} \sum_{k=1}^{N_n} \Im (z_{n,k}) = N_n \to \infty .  
% $$
Hence $B$ does not satisfy the MAG condition.

% Then, for any $z\in Q_n:= \{x+iy: x\in I_n, \frac{ L_n}{N_n} \le y \le  L_n \}$, 
% $-\log |b_{z_{n,k}}(z)|$ is bounded below by a constant multiple
% of the Poisson integral of $\chi_{[z_{n,k},z_{n,k+1}]}$, therefore $|B(z)|\le c_0<1$,
% where $c_0$ is an absolute constant. 

% If furthermore $\Im z = L_n$, a computation shows that 
% \[
% 1-\rho(z, z_{m,k}) \le \frac4{N_m \max\left( \frac{L_n}{L_m},\frac{L_m}{L_n} \right)}
% \le \max\left( \frac4{N_n} ; \frac{L_n}{L_m N_m}, 1\le m \le n \right),
% \] 
% for any $m, k$. The right-hand side tends to $0$ when $n\to\infty$. Indeed, let
% $m_\eps$ be chosen so that $N_m\ge 1/\eps$ whenever $m\ge m_\eps$. Then 
% $\frac{L_n}{L_m N_m} \le 1/N_m \le \eps$ for $n\ge m\ge  m_\eps$. Otherwise,
% \[
% \frac{L_n}{L_m N_m} \le \frac{L_n}{L_m } \le \frac{L_n}{\min (L_m, 1\le m \le m_\eps)} \to 0.
% \]

% Therefore there are points $z$ arbitrarily far away from $B^{-1}\{0\}$ such that $|B(z)|\le c_0$,
% and the SIP is violated.
\end{proof}

Last example can be improved to obtain a Blaschke product which cannot even become SIP after the addition
of extra zeros.

\begin{proposition}
\label{intnotsipable}
There exists an interpolating Blaschke product which is not a divisor of a $\mathrm{SIP}$ function.
\end{proposition}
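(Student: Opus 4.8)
The plan is to build an interpolating Blaschke product $B$ so that, no matter which extra zeros one adds, the resulting inner function $\Theta$ still fails the MAG criterion of Theorem~\ref{motiv}, hence fails the SIP. The idea is to make the "gaps" of $B$ so large and regularly spaced that any attempt to fill the sublevel set $\{0<|\Theta|<1-\eps\}$ by inserting extra zeros is forced to create a wide pseudohyperbolic disc \emph{inside} that sublevel set, contradicting the narrowness characterization of Theorem~\ref{narrowchar}. Concretely, I would work in the upper half-plane and, as in Proposition~\ref{notstar}, place blocks of zeros $z_{n,k}=x_n + kL_n/N_n + iL_n/N_n$, $0\le k<N_n$, with $\{L_n\}$ summable, $\{\tilde I_n\}$ disjoint and bounded, but now demand that $N_n\to\infty$ \emph{very} fast relative to a sequence of radii $R_n\to\infty$ that we get to choose. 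The point is that the block over $I_n$, viewed hyperbolically near height $L_n$, looks like a $\rho$-dense net at a tiny scale, so $B$ is small on all of $Q_n=\{x\in I_n,\ L_n/N_n\le y\le L_n\}$, yet the complementary region above $Q_n$ (heights between $L_n$ and, say, $L_{n-1}$, over a long horizontal stretch) is a huge hyperbolic box on which $B\equiv$ something very close to $1$ and which is \emph{far} (in $d_H$) from $\mathcal Z(B)$.

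The key steps, in order, are as follows. First, fix any inner function $\Theta$ with $\mathcal Z(B)\subset\mathcal Z(\Theta)$; since a singular factor only makes $|\Theta|$ smaller it cannot help, so we may assume $\Theta$ is a Blaschke product, $\Theta=B B'$ with $B'$ carrying the extra zeros. Second, show that for the "target" region $T_n$ — a hyperbolic ball of radius $R_n$ centered at a point $w_n$ sitting just above $I_n$ at a height comparable to $L_n$ — one has a dichotomy: either $B'$ has \emph{no} zero in the doubled ball $2T_n=D_H(w_n,2R_n)$, in which case $|\Theta|=|B||B'|$ is close to $1$ on $T_n$ while $d_H(T_n,\mathcal Z(\Theta)\setminus\text{(zeros in }2T_n))\ge R_n$, giving a wide disc in $\{0<|\Theta|<1-\eps\}$ provided we also know $|B|$ is not too small there (true since $w_n$ is hyperbolically far above the dense block); or $B'$ \emph{does} have a zero in $2T_n$, and then, because $2T_n$ still avoids the block of zeros of $B$ over $I_n$ for the appropriate geometry, a slightly shrunk sub-ball $T_n'\subset 2T_n$ of radius $R_n/4$ avoiding that zero of $B'$ (and staying far from all others, using the spacing $N_n\gg R_n$) is again entirely inside $\{0<|\Theta|<1-\eps\}$. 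Third, since $R_n\to\infty$, in either branch we produce arbitrarily large hyperbolic discs inside $\{0<|\Theta|<1-\eps\}$ for a fixed $\eps$ (any $\eps<1-c_0^{1/2}$ say, using the universal bound $|B|\le c_0$ on the blocks to control $|B|$ on the far region via Harnack and on the avoided sub-balls directly), so $\Theta$ violates the narrowness criterion of Theorem~\ref{narrowchar} and hence fails the SIP.

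I would carry out the estimates by choosing the parameters in the order $L_n\downarrow$ summable, then $R_n\uparrow\infty$ arbitrarily, then $N_n$ so large that $L_n/N_n$ is much smaller than $e^{-R_n}L_n$ (ensuring the block is $\rho$-dense at scale $\gg R_n$, so a whole $R_n$-ball hugging it has $|B|$ controlled), and finally thinning/spacing $I_n$ and $x_n$ so that $\tilde I_n$ are disjoint and the vertical strip above $I_n$ up to height $\sim L_{n-1}$ contains no other block. The main obstacle is the branch where the adversary's extra zeros \emph{do} intrude into $2T_n$: one must argue that the extra zeros are too sparse to fill a ball of radius $R_n/4$, and this requires quantifying "how many" disjoint large balls the adversary would need — essentially, one shows that if \emph{every} translate $T_n'$ inside $2T_n$ contained an extra zero then $\sum(1-|z|)$ over those zeros would already be large, but that does not by itself contradict a Blaschke product; the correct fix is to run the argument with a \emph{single} well-chosen sub-ball $T_n'$ disjoint from one fixed extra zero and from the block, so no counting is needed, only the observation that a hyperbolic ball of radius $2R_n$ with one marked interior point contains a sub-ball of radius $R_n/4$ avoiding that point and still far from the block — a clean geometric fact in the hyperbolic metric. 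Making this sub-ball simultaneously avoid \emph{all} extra zeros is where one genuinely needs the adversary to be Blaschke; I expect to handle it by noting that in $2T_n$ the extra zeros, being a Blaschke sequence, are eventually $\rho$-separated there too, or more robustly by iterating: if one sub-ball is spoiled, pass to a child sub-ball, and since only finitely much $\rho$-mass is available in $2T_n$ from $B'$ (bounded in terms of $d_H(w_n,\mathcal Z(B'))$ which we do not control — so instead use that $R_n$ is free and take $R_n$ so large that even after deleting a ball around each of boundedly many "bad" extra zeros a radius-$(R_n/4)$ clean ball survives). This last point is the delicate one and is where most of the care in the full proof will go.
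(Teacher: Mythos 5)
There is a genuine gap, and it sits exactly where you yourself flag it: the adversarial zeros cannot be avoided by a local geometric dichotomy, and your construction omits the quantitative hypothesis that makes the paper's argument work. The paper takes the same family as in Proposition~\ref{notstar} but with the crucial extra requirement $\sum_n L_n<\infty$ while $\sum_n L_n\log N_n=\infty$. Then the argument is global and is a counting argument, not a ball-avoidance argument: if $BB_1$ had the SIP, the bound $|BB_1|\le |B|\le c_0$ on $Q_n=\{x\in I_n,\ L_n/N_n\le y\le L_n\}$ forces \emph{every} point of $Q_n$ to lie within a fixed pseudohyperbolic distance $\eta_{BB_1}^{-1}(c_0)$ of a zero of $BB_1$; a zero at height $y$ can only "cover" a region of Euclidean size comparable to $y$, so the Blaschke sum of $BB_1$ attached to $Q_n$ is at least a fixed multiple of $\int_{Q_n} dm_2(z)/(1-|z|)\asymp L_n\log N_n$, and summing over $n$ contradicts the Blaschke condition. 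Your parameter choices ($N_n\gg e^{R_n}$, $R_n\to\infty$, $\sum L_n<\infty$) never impose $\sum_n L_n\log N_n=\infty$, and without that divergence the statement is simply not reachable: if $\sum_n L_n\log N_n<\infty$ the adversary has enough Blaschke mass to sprinkle zeros at bounded hyperbolic density throughout every $Q_n$, so no large hyperbolic disc free of adversary zeros survives anywhere that $|\Theta|$ is bounded away from $1$, and your "clean sub-ball of radius $R_n/4$" does not exist. This is why your attempted fixes (eventual separation of the extra zeros, bounding the $\rho$-mass in $2T_n$, enlarging $R_n$) cannot close the argument: locally the adversary's density is unconstrained; the only constraint is the global Blaschke condition, which your proof never converts into a quantitative lower bound.

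There is also an internal inconsistency in your first branch. You center the target ball $T_n$ \emph{above} the block, at height comparable to $L_n$, and simultaneously claim that $|\Theta|$ is close to $1$ on $T_n$ and that $T_n$ gives a wide disc inside $\{0<|\Theta|<1-\eps\}$; these are incompatible, and indeed on the upper part of a ball of radius $R_n\to\infty$ centered at height $\sim L_n$ one has $|B|\to 1$, so such a ball can never lie in the sublevel set. The region that witnesses failure of narrowness must lie \emph{inside} $Q_n$ (e.g., balls of radius $\sim\frac12\log N_n$ at intermediate heights, where $|B|\le c_0$ and $B$ is zero-free), and once you move there you are back to the situation where the adversary may have filled the ball with zeros — which is precisely the counting problem resolved by the choice $\sum_n L_n\log N_n=\infty$. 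So the construction's starting point matches the paper, but the core mechanism of the proof (the forced Blaschke mass $\gtrsim L_n\log N_n$ per block, made non-summable by the choice of $N_n$) is missing.
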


\begin{proof}
Consider the Blaschke product $B$ of Proposition~\ref{notstar} above. 
This time choose $L_n, N_n$ such that $\sum_n L_n <\infty$, $\sum_n L_n \log N_n =\infty$. As before, $B$ is an interpolating Blaschke product.

We argue by contradiction. Assume that $B_1$ is a Blaschke product such that $BB_1$ has the SIP. Since $|BB_1(z)|\le |B(z)| \le c_0$ for any $z \in Q_n$, there exists a zero $\zeta$ of $BB_1$ such that $\rho(z,\zeta) \le \eta_{BB_1}^{-1} (c_0)$.
%and for $\Im z \ge C_1 \frac{ L_n}{N_n}$, this implies that $\zeta$ is a zero of $B_1$. 
The Blaschke sum of $BB_1$ on $Q_n$ is thus bigger than a fixed multiple of 
\[
\int_{Q_n}\frac{dm_2(z)}{1-|z|}\asymp L_n \log N_n.
\]
So the zeros of $BB_1$ cannot satisfy the Blaschke condition. 
\end{proof}

\subsection{Divisors of SIP are not always SIP}

Our next result says that divisors of SIP Blaschke products may not be SIP.

\begin{proposition}
 \label{divnotsip}
There exists a Blaschke product $B = B_1 B_2$ which has the {\rm SIP} such that $B_i$ do not have {\rm SIP} or {\rm WEP}, $i=1,2$.   
\end{proposition}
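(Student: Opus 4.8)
The goal is to construct a Blaschke product $B = B_1 B_2$ satisfying the SIP so that neither factor $B_i$ satisfies the SIP or the WEP. The natural strategy is to build $B_1$ and $B_2$ so that each one, \emph{in isolation}, has a sequence of approach regions where it is small far from its own zero set (so that WEP, hence SIP, fails), but the regions where $B_1$ is small are ``filled in'' by zeros of $B_2$ and vice versa, so that the full product $B$ has the MAG property and hence the SIP by Theorem~\ref{motiv}. As in the previous propositions, the computations are cleanest in the upper half plane $\mathbb H$, using the formula $1-\rho(z,w)^2 = 4\,\Im z\,\Im w / \big((\Re z - \Re w)^2 + (\Im z + \Im w)^2\big)$.

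\textbf{Construction.} I would take a lacunary sequence of heights, say $y_n = 8^n$, and at each height place \emph{two interleaved rows} of equally spaced points. More precisely, on the row at height $y_n$ put points $z^{(1)}_{n,k} := k\,\ell_n y_n + i y_n$ for $k\in\Z$ in $B_1$, and on the row at height $y_{n}' $ (a slightly different height, or the same row shifted) put the analogous points $z^{(2)}_{n,k}$ in $B_2$; here $\ell_n\to\infty$ is chosen so that each individual row is a very sparse (interpolating, even thin-like) sequence, but the union of the two rows at comparable heights is dense enough on that scale. The key mechanism: because the spacing $\ell_n y_n$ in a single row grows, the point $i y_n$ (the ``gap center'' of row $n$ of $B_1$) is at pseudohyperbolic distance tending to $1$ from $\mathcal Z(B_1)$, yet at each such height the zeros of $B_2$ are arranged to sit near that gap; so $d_H(iy_n, \mathcal Z(B)) $ stays bounded. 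Arranging the rows of $B_1$ to have an \emph{increasing number of zeros within a bounded hyperbolic ball around a fixed-distance center} (as in the corollary after Proposition~\ref{wepsipnotCN}) makes $B_1$ fail the WEP; the same device applied symmetrically makes $B_2$ fail it. One should choose $\ell_n$ decreasing fast enough relative to $y_n$ to ensure the Blaschke condition $\sum_{n,k}(\ell_n y_n)^{-1}<\infty$ for each factor, exactly as in the Treil example.

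\textbf{Verifying $B$ has the SIP.} By Theorem~\ref{motiv} it suffices to show $B=B_1B_2$ has MAG, i.e. $|B(z)|\to 1$ as $\rho(z,\mathcal Z(B))\to 1$. First one checks, via the geometry of the lacunary heights, that a point $z=x+iy$ with $\rho(z,\mathcal Z(B))\ge 1-\eps$ is forced into a thin strip $c\,y_n \le y \le C\,y_n$ away from every row, or into a strip of height $\lesssim \eps y_n$ near some row; in the latter case the Riemann-sum estimate
\[
\log\frac{1}{|B(z)|} \lesssim \sum_{n,k}\big(1-\rho(z,z^{(1)}_{n,k})^2\big) + \sum_{n,k}\big(1-\rho(z,z^{(2)}_{n,k})^2\big) \lesssim \frac{y}{y_n}\sum_n \frac{1}{\ell_n} \lesssim \eps,
\]
exactly as in \eqref{estdo4}, provided $\sum_n \ell_n^{-1}<\infty$. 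In the former case, lacunarity ($y_{n+1}/y_n\ge 8$) guarantees that at such an intermediate height $z$ is actually far from \emph{all} zeros of $B$, so $|B(z)|$ is again close to $1$ by a crude tail estimate. Thus MAG holds.

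\textbf{Main obstacle.} The delicate point is the simultaneous balancing of three competing requirements: (i) each single factor $B_i$ must have a fixed-distance center with unboundedly many zeros nearby (to kill WEP), which pushes toward \emph{denser} rows; (ii) the Riemann-sum bound for $\log|B|^{-1}$ must stay $\lesssim \eps$, which requires $\sum_n \ell_n^{-1} < \infty$, i.e. \emph{sparser} rows; (iii) the zeros of $B_2$ must genuinely occupy the gaps of $B_1$ so that $d_H(\,\cdot\,,\mathcal Z(B))$ does not blow up where $d_H(\,\cdot\,,\mathcal Z(B_i))$ does. The resolution is to let the ``failure of WEP'' for $B_1$ live on a \emph{different sub-sequence of heights} than the one where $B_1$'s gaps get filled by $B_2$ — e.g. use heights $y_{2n}$ for the WEP-obstruction of $B_1$ and heights $y_{2n+1}$ for that of $B_2$, and at every height include enough of the \emph{other} factor's zeros (with controlled, summable total Blaschke mass) to close the gap. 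Checking that these heights can be chosen consistently, and that the per-scale sums telescope geometrically thanks to lacunarity, is the technical heart of the argument; once the bookkeeping is set up, each individual estimate is a routine variant of the Treil-type computation already carried out in Proposition~\ref{wepsipnotCN} and its corollary.
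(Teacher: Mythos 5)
There is a genuine gap, and it sits exactly where your plan is least specific: you never establish that the factors $B_1,B_2$ fail the \emph{SIP}. Your parenthetical ``so that WEP, hence SIP, fails'' has the implication backwards: failure of the WEP means only that $\eta_{B_i}(t_0)=0$ for some fixed $t_0$, while failure of the SIP (via Theorem~\ref{motiv}) requires points at pseudohyperbolic distance \emph{arbitrarily close to $1$} from $\mathcal Z(B_i)$ where $|B_i|$ stays bounded away from $1$; the paper itself shows these are independent (the corollary to Proposition~\ref{wepsipnotCN} produces a Blaschke product with the SIP but without the WEP). So the cluster mechanism you invoke (many zeros near a center at fixed hyperbolic distance from $\mathcal Z(B_1)$) can at best kill the WEP of the factors, not their SIP. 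Worse, the mechanism that would naturally kill the SIP --- gap centers of the rows of $B_1$ at distance tending to $1$ from $\mathcal Z(B_1)$ --- is defeated by your own sparsity requirement $\sum_n \ell_n^{-1}<\infty$: at such a gap center the same Riemann-sum estimate you use for the product gives $\log|B_1|^{-1}\lesssim \ell_n^{-1}$ plus geometrically small contributions from the other lacunary rows, so $|B_1|\to 1$ there and nothing is violated. (Also, with spacing $\ell_n y_n\gg y_n$ a bounded hyperbolic ball meets at most one zero of a row, so ``an increasing number of zeros within a bounded hyperbolic ball'' forces yet another modification of the construction, whose compatibility with the MAG of the product is not checked.) The tension you flag under ``Main obstacle'' is thus not a bookkeeping issue but the actual mathematical content, and it is left unresolved; what must be produced is a sequence $w_j$ with $d_H(w_j,\mathcal Z(B_1))\to\infty$, $|B_1(w_j)|\le 1-\eps$, and simultaneously $d_H(w_j,\mathcal Z(B_2))$ bounded (so the product can still be MAG, since $|B|\le|B_1|$ there), and your sketch does not exhibit such points.

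For comparison, the paper resolves this with essentially no new estimates: take Treil's product $B$ from Proposition~\ref{wepsipnotCN} (already shown to have the SIP and the WEP) and let $B_1$, $B_2$ carry its zeros with positive, respectively negative, real part. Then $B=B_1B_2$ has the SIP, while for every $c>0$ one has $B_1(-x+icx)\to 0$ and $B_2(x+icx)\to 0$ as $x\to+\infty$; since for small $c$ these rays are eventually at pseudohyperbolic distance as close to $1$ as one wishes from the corresponding factor's zero set, $\eta_{B_i}\equiv 0$, so each factor fails both the WEP and the SIP. Your interleaving idea is in the same spirit (each factor leaves large empty regions that the other fills), but the half-plane split achieves this automatically, whereas your version would need the missing construction above to be carried out before it proves the proposition.
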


\begin{proof}
    We argue in the upper half plane. Let $B$ be the Blaschke product constructed by S.Treil described in Proposition \ref{wepsipnotCN}. Let $B_1$ (respectively $B_2$) be the Blaschke product having the zeros of $B$ with positive (respectively negative) real part. Note that for any $c>0$ we have 
    $$
   \lim_{x \to + \infty} B_2 (x + i cx) =0, \quad   \lim_{x \to + \infty} B_1 (-x + i cx) =0.    $$
    Hence $B_i$, $i=1,2,$ does not have the SIP or the WEP. 
\end{proof}

% \section{Transformations of the zero set}
% \label{transf}

\subsection{Stability of the zero set under local perturbations}

\begin{proposition}
\label{jiggle}
 Let $\Theta$ be an inner function, $\{z_n\}_{n\ge 1}\subset \ZTH$, and let $\{z_n^*\}_{n\ge 1}$ be such that $\sup_{n\ge 1}\rho(z_n,z^*_n) <1$. Denote by $B$ (respectively $B^*$) the Blaschke products
with zero sets $\{z_n\}$ (respectively $\{z^*_n\}$). 
If $\Theta $ has the {\rm SIP}, then $\Theta_1=\Theta B^* / B$ has the {\rm SIP}. 
\end{proposition}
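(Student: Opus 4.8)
The plan is to use the characterization from Theorem \ref{motiv}: it suffices to show that $\Theta_1 = \Theta B^*/B$ has the MAG property, i.e. $\lim_{t\to 1}\eta_{\Theta_1}(t) = 1$. Set $c := \sup_n \rho(z_n, z_n^*) < 1$, and fix $R$ with $\tanh R = c$, so that $d_H(z_n, z_n^*) \le R$ for all $n$. The key observation is that $B$ and $B^*$ have "comparable" growth away from their (respective) zero sets because the zeros are matched up within bounded hyperbolic distance; more precisely, for any $w$ with $d_H(w, \{z_n\} \cup \{z_n^*\})$ large, each individual factor satisfies $-\log|b_{z_n}(w)| \asymp -\log|b_{z_n^*}(w)| \asymp e^{-2d_H(w,z_n)}$, with comparability constants depending only on $R$ (via Harnack, since $d_H(z_n, z_n^*) \le R$). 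Summing, $-\log|B(w)|$ and $-\log|B^*(w)|$ are comparable up to a constant $C = C(R)$ whenever $w$ is at hyperbolic distance at least, say, $2R$ from all the relevant zeros.

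The main steps I would carry out are as follows. First, given $\eps > 0$, choose $\delta$ so that $\eta_\Theta(\delta) \ge 1 - \eps'$ for a small $\eps'$ to be fixed, and let $z$ be a point with $\rho(z, \mathcal Z(\Theta_1))$ close to $1$; note $\mathcal Z(\Theta_1) = (\ZTH \setminus \{z_n\}) \cup \{z_n^*\}$. Since $\{z_n\} \subset \ZTH$ and the $z_n^*$ are within hyperbolic distance $R$ of the $z_n$, a point far (in hyperbolic distance) from $\mathcal Z(\Theta_1)$ is also reasonably far from $\ZTH$ (losing only an additive constant $R$), hence $|\Theta(z)| \ge 1 - \eps'$ by MAG of $\Theta$, which already controls the part of $\Theta_1$ coming from $\Theta$. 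Second, I must show $|B^*(z)/B(z)|$ is close to $1$. Write $\log|B^*(z)/B(z)| = \sum_n \log|b_{z_n^*}(z)| - \log|b_{z_n}(z)|$; each term is comparable in size to $e^{-2d_H(z,z_n)}$ (again up to constants depending only on $R$, since the two zeros are hyperbolically $R$-close and $z$ is far from both), and summing gives $|\log|B^*(z)/B(z)|| \lesssim \sum_n e^{-2d_H(z,z_n)} \lesssim -\log|B(z)| \le -\log|\Theta(z)| \le -\log(1-\eps')$. Third, combine: $-\log|\Theta_1(z)| \le -\log|\Theta(z)| + |\log|B^*(z)/B(z)|| \lesssim -\log(1-\eps')$, which can be made $\le -\log(1-\eps)$ by choosing $\eps'$ small enough; this proves $\eta_{\Theta_1}(t) \to 1$ and hence, by Theorem \ref{motiv}, that $\Theta_1$ has the SIP.

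The step I expect to be the main obstacle is making the per-factor comparison $-\log|b_{z_n}(z)| \asymp -\log|b_{z_n^*}(z)|$ genuinely uniform — i.e. ensuring the comparability constant depends only on $c$ (equivalently $R$) and not on $n$ or on how close $z$ is to the zeros. The cleanest route is to use that $-\log|b_a(z)| = -\log\tanh^{-1}$-type quantity is (for $z$ away from $a$) comparable to $e^{-2d_H(z,a)}$ with absolute constants, and that $|d_H(z,z_n) - d_H(z,z_n^*)| \le d_H(z_n,z_n^*) \le R$, so the ratio $e^{-2d_H(z,z_n^*)}/e^{-2d_H(z,z_n)} \in [e^{-2R}, e^{2R}]$. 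One subtlety: this needs $z$ to be at distance bounded below (say $\ge 2R$) from both $z_n$ and $z_n^*$; but if $\rho(z, \mathcal Z(\Theta_1))$ is close enough to $1$ this holds for the $z_n^*$, and the bound $d_H(z, z_n) \ge d_H(z, z_n^*) - R$ handles the $z_n$. A harmless edge case is when some $z_n$ coincide or when $\Theta$ has a singular factor — neither causes trouble since we only manipulate the Blaschke quotient $B^*/B$ and bound $\Theta_1$ from below by $|\Theta| \cdot |B^*/B|$ pointwise. I would also double-check that the comparison sum $\sum_n e^{-2d_H(z,z_n)}$ is legitimately dominated by $-\log|B(z)| \le -\log|\Theta(z)|$, which follows from the standard estimate $-\log|B(z)| \gtrsim \sum_n (1 - \rho(z,z_n)) \asymp \sum_n e^{-2d_H(z,z_n)}$ for $z$ not too close to the zero set.
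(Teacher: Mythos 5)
Your proposal is correct and follows essentially the same route as the paper's proof: reduce to the MAG criterion of Theorem \ref{motiv}, use the hyperbolic triangle inequality (with the uniform bound $d_H(z_n,z_n^*)\le R$) to pass from ``far from $\mathcal Z(\Theta_1)$'' to ``far from $\ZTH$'' so that $|\Theta|$, hence $|B|$, is near $1$, and then compare $|B^*|$ with $|B|$ through the matched-zero, summed factor estimates with constants depending only on $\sup_n\rho(z_n,z_n^*)$. The paper states the $B$ versus $B^*$ comparison more tersely (as the existence of a threshold $t_2(t,a,\varepsilon)$), but the underlying mechanism is exactly the one you spell out.
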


In contrast, it is known \cite{Bo} that we can have 
$\Theta\in \mathrm{WEP}$, and $\Theta B^*/B\not\in \mathrm{WEP}$. 

\begin{proof}
Let $a= \sup_{n\ge 1}\rho(z_n,z^*_n) <1$. Fix $0 < \varepsilon < 1$. Since $\Theta$ has the $\mathrm{SIP}$, there exists $t<1$ such that 
$$
\rho(w,\ZTH)\ge t \implies |\Theta(w)|\ge 1-\varepsilon^3. 
$$
Then for some $t_1=t_1(t,a)<1$ we have 
$$
\rho(w,\mathcal Z(\Theta_1))\ge t_1 \implies \rho(w,\ZTH)\ge t \implies |\Theta(w)|\ge 1-\varepsilon^3. 
$$
Furthermore, for some $t_2=t_2(t,a,\varepsilon)<1$, if $\rho(w,\{z^*_n\})\ge t_2$ and $|B(w)|\ge 1-\varepsilon^3,$
% $$
% \rho(w,\{z^*_n\})\ge t_2,\quad \sup_{n\ge 1}\rho(z_n,z^*_n)=a,\quad |B(w)|\ge 1-\varepsilon^3,
% $$
 then $|B^*(w)|\ge 1-\varepsilon^2$. Therefore, if $\rho(w,\mathcal Z(\Theta_1))\ge \max(t_1,t_2)$, then 
$$
|\Theta_1(w)|\ge 1-\varepsilon, 
$$
and, hence, $\Theta_1$ has the SIP.
\end{proof}

\subsection{Blaschke products with zeros in a Stolz angle}
\label{abp}

In order to exhibit examples of inner functions in $\mathcal M $ which do not have the SIP, let us consider the 
very restrictive case of Blaschke products $\Theta$ with zeros $\{a_j \}$ within a Stolz angle. Then we always have
$\Theta \in \mathcal M$ \cite[Corollary 4]{MN}. In this particular case, it is known \cite[Theorem 1]{GPV} that the following properties are equivalent: 
\begin{itemize}
\item $\Theta$ is Carleson--Newman;
\item $\{a_j\}$ is a finite union of separated sequences;
\item $\{a_j\}$ is a finite union of exponential sequences.
\end{itemize}
Furthermore, $\Theta$ has the WEP if and only if $\Theta$ is Carleson--Newman \cite[(P7), p. 869]{GMN}.

\begin{proposition}\label{Stolz}
Let $\Theta$ be a Blaschke product  whose zeros $\{a_j\}$ lie in a Stolz angle. Then the following conditions are equivalent
\begin{enumerate}
\item $\Theta$ is Carleson--Newman;
\item $\Theta$ satisfies the {\rm SIP};
\item $\sup \{\eta_\Theta(t) : 0<t<1 \}>0$;
\item $\Theta\in \mathcal{P}$.
\end{enumerate}
\end{proposition}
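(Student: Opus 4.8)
The plan is to establish the chain of implications $(4)\Rightarrow(2)\Rightarrow(3)\Rightarrow(1)\Rightarrow(4)$, using the ambient facts already recorded in the excerpt plus the three-way equivalence cited from \cite{GPV} for zeros in a Stolz angle. The implications $(4)\Rightarrow(2)$ and $(1)\Rightarrow(4)$ are essentially free: by Corollary~\ref{WEP-cor}(c) any Blaschke product in $\mathcal{P}$ satisfies the SIP, and conversely, for zeros in a Stolz angle we always have $\Theta\in\mathcal{M}$ by \cite[Corollary 4]{MN}, so if $\Theta$ is Carleson--Newman then $\Theta\in(CN)\cap\mathcal{M}=\mathcal{P}$. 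The implication $(2)\Rightarrow(3)$ is immediate from Theorem~\ref{motiv}, since the SIP forces $\sup_{0<t<1}\eta_\Theta(t)=1>0$.

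The only implication requiring real work is $(3)\Rightarrow(1)$: if $\sup\{\eta_\Theta(t):0<t<1\}>0$ then $\Theta$ is Carleson--Newman. Here I would invoke \cite[Proposition 2]{Bo} (quoted in the excerpt before \eqref{classM}): whenever $0<|a|<\sup\{\eta_\Theta(t):0<t<1\}$, the Frostman shift $\varphi_a\circ\Theta$ is a Carleson--Newman Blaschke product. Since by hypothesis this supremum is strictly positive, we may pick such an $a\ne0$, so $\varphi_a\circ\Theta\in(CN)$. It remains to transfer the Carleson--Newman property back from $\varphi_a\circ\Theta$ to $\Theta$ itself. This is where the Stolz-angle hypothesis is used: one should argue that a Blaschke product whose zeros lie in a Stolz angle and whose Frostman shift is Carleson--Newman must itself be Carleson--Newman. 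The cleanest route is via the \cite{GPV} trichotomy --- one shows $\varphi_a\circ\Theta\in(CN)$ forces the zeros of $\Theta$ to be a finite union of separated sequences (the zeros of $\varphi_a\circ\Theta$ include, up to a bounded perturbation controlled by $\rho(z,\ZTH)\ge$ const on the sublevel set $\{|\Theta|<|a|\}$, a separated ``net'' near each cluster of zeros of $\Theta$), and then the equivalence ``finite union of separated sequences $\Leftrightarrow$ Carleson--Newman'' for zeros in a Stolz angle, from \cite[Theorem 1]{GPV}, closes the loop.

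The main obstacle, then, is the last transfer step: extracting from $\varphi_a\circ\Theta\in(CN)$ the conclusion that $\{a_j\}$ is a finite union of separated sequences. The point is that the zeros of $\Theta$ sit in a Stolz angle, so the geometry is one-dimensional in a strong sense, and the sublevel set $\{z:|\Theta(z)|<|a|\}$, being a union of hyperbolic neighborhoods of the zeros, is contained in a neighborhood of that Stolz angle; combined with the fact that $\varphi_a\circ\Theta$ is Carleson--Newman (hence its zero set carries a Carleson measure), one bounds the number of zeros of $\Theta$ in any Carleson box, which after a pigeonhole/separation argument gives the finite union of separated sequences. Alternatively, and perhaps more economically, one can bypass \cite{GPV} for this direction by noting that $\varphi_a\circ\Theta\in(CN)$ together with $\Theta=\varphi_a\circ(\varphi_a\circ\Theta)$ (since $\varphi_a$ is an involution) and the fact that $\varphi_a\circ\Theta$ has zeros clustering only where $|\Theta|$ is small --- i.e.\ near the Stolz angle --- lets one apply \cite[Proposition 2]{Bo} a second time or directly estimate $\sum_j(1-|a_j|)\delta_{a_j}$ on Carleson squares. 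Either way the Stolz-angle restriction is exactly what prevents the pathology of Corollary~\ref{SIP-equiv} (where $\mathcal M$-functions can have $\eta_\Theta\equiv0$) from occurring, so that $(3)$ genuinely upgrades to $(1)$.
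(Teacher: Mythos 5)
Your implications $(4)\Rightarrow(2)$ (Corollary~\ref{WEP-cor}), $(2)\Rightarrow(3)$ (Theorem~\ref{motiv}) and $(1)\Rightarrow(4)$ (via \cite[Corollary 4]{MN}) are correct and agree with the paper. The gap is in $(3)\Rightarrow(1)$, which is precisely the implication carrying all the content. Your plan is to deduce from $\sup_t\eta_\Theta(t)>0$ that $\varphi_a\circ\Theta\in(CN)$ for some small $a\neq0$ (via \cite[Proposition 2]{Bo}) and then to ``transfer'' the Carleson--Newman property back to $\Theta$ using the Stolz-angle geometry. That transfer is impossible: by \cite[Corollary 4]{MN}, \emph{every} Blaschke product with zeros in a Stolz angle lies in $\mathcal M$, i.e.\ $\varphi_a\circ\Theta\in(CN)$ for \emph{all} $a\in\D\setminus\{0\}$, whether or not $\Theta$ itself is Carleson--Newman. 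The paper's own example --- the Blaschke product with zeros $1-2^{-j}$ of multiplicity $j$ --- has zeros on a radius, all nonzero Frostman shifts Carleson--Newman, yet is not Carleson--Newman (and has $\eta_\Theta\equiv0$). So the statement ``Stolz zeros and $\varphi_a\circ\Theta\in(CN)$ imply $\Theta\in(CN)$'' is false, and no refinement of your net/pigeonhole or ``apply \cite[Proposition 2]{Bo} twice'' sketches can succeed, because at the point where you pass to the Frostman shift you have discarded the quantitative hypothesis $\sup_t\eta_\Theta(t)>0$, which is exactly the extra information distinguishing $(3)$ from mere membership in $\mathcal M$.

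For comparison, the paper proves $(3)\Rightarrow(1)$ directly from the sublevel-set information, without Frostman shifts: fix $\delta_0$ with $\eps_0=\eta_\Theta(1-\delta_0)>0$, and choose a Stolz parameter $\sigma$ so large that every point $z$ with $|z-\zeta|\ge\sigma(1-|z|)$ satisfies $\rho(z,\ZTH)\ge1-\delta_0$, hence $|\Theta(z)|\ge\eps_0$. For each zero $a_k$ one takes a comparison point $z_k$ with $|z_k|=|a_k|$ and $|z_k-\zeta|=\sigma(1-|a_k|)$; the Stolz geometry gives $1-\rho(z_k,a_j)\ge c\,(1-\rho(a_k,a_j))$ uniformly in $j,k$, so
$\sum_j(1-\rho(a_k,a_j))\le c^{-1}\sum_j(1-\rho(z_k,a_j))\le -c^{-1}\log|\Theta(z_k)|$, which is bounded independently of $k$; this is the Carleson--Newman condition. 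If you want to keep your chain $(4)\Rightarrow(2)\Rightarrow(3)\Rightarrow(1)\Rightarrow(4)$, you need to replace your $(3)\Rightarrow(1)$ argument by something of this kind that uses the lower bound $|\Theta|\ge\eps_0$ off a fixed hyperbolic neighborhood of $\ZTH$ at points comparable in height to each zero.
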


The Proposition shows that any Blaschke product whose zero set is included in a Stolz angle and is not a finite union of separated sequences belongs to $\mathcal M$ but does not  satisfy the SIP. For instance we may consider the Blaschke product with zeros at the points $1-2^{-j}$ with multiplicity $j$, $j\ge 1$.

\begin{proof}
\noindent (1) $\Longrightarrow (2)$ is a direct consequence of \cite[Theorem 1]{GPV} and Proposition \ref{Corol1}. 

(2) $\Longrightarrow (3)$ is straightforward since the SIP is equivalent to the relation $\lim_{t \to 1} \eta_\Theta(t)=1$.

(3) $\Longrightarrow (1)$: Let $0 < \delta_0 < 1$ be such that $\eps_0=\eta_\Theta(1-\delta_0)>0$ and let $\sigma_0>1$, $\zeta\in \partial \D$,  be such that
\[
\ZTH\subset \{ z \in \D : |z-\zeta|\le \sigma_0 (1-|z|)\}.
\]
Choose $\sigma=\sigma(\sigma_0,\delta_0)$ large enough so that 
\[
|a-\zeta|\le \sigma_0 (1-|a|), |z-\zeta|\ge \sigma (1-|z|)\Longrightarrow \rho(z,a)\ge 1-\delta_0.
\]
For any $a_k\in  \ZTH$, let $z_k$ be such that $|z_k|=|a_k|$ and $|z_k-\zeta|= \sigma (1-|a_k|)$. Then we have $1-\rho(z_k,a_j)\ge c (1- \rho (a_k , a_j))$ for any $k,j,$ where $c>0$ is a constant only depending on $\sigma$ and $\sigma_0$. Then 
\[
\begin{split}
\sum_{j} (1-\rho(a_k,a_j))&\le c^{-1}\sum_{j}(1-\rho(z_k,a_j))\\
\le &-c^{-1}\log |\Theta(z_k)|\le -c^{-1}\log(1-\eps_0).
\end{split} 
\]
Hence $\Theta$ is Carleson--Newman.

(4) $\Longrightarrow (1)$ is obvious.

(1) $\Longrightarrow (4)$: As mentioned before, any Blaschke product whose  zeros are contained in a Stolz angle is in the class $\mathcal{M}$ (\cite[Corollary 4]{MN}). 
By definition, $\mathcal{P}$ consists of the Carleson--Newman Blaschke products in the class $\mathcal{M}$.
\end{proof}

% \section{Some examples.}
% \label{counterex}

% \begin{proposition}
% \label{divnotsip}
% exists Blaschke products $B$ which have the SIP but $B=\Theta_1 \Theta_2$ where $\Theta_1$ does not
% the SIP.  
%\end{proposition}

% First, we  require the construction of a family of examples inspired by the proof of \cite[Proposition 3]{Bo}.
% This provides another family of examples of SIP functions which are not Carleson-Newman Blaschke products.

% For $a \in \D$, 
%we denote $\varphi_a(z):=\frac{a-z}{1-z\bar a}$ the involutive automorphism of the disc exchanging $0$ and $a$, and for
%  $a\neq 0$, we denote
% $b_a(z):= \frac{\bar a}{|a|}\varphi_a(z) = \frac{\bar a}{|a|} \frac{a-z}{1-z\bar a}$ the Blaschke factor 
% with zero at $a$.

%Let $m \in \N$, 
% Let $(N_j)_{j \ge 1}$ be an increasing sequence of integers, and $(r_j)_{j \ge 1}$ a strictly
% increasing sequence of numbers in $(0,1)$. Let $a_{j,l}:= r_j \exp(2\pi i l/N_j)$, $0\le l < N_j$. 

% We write
% \begin{equation}
% \label{cloud}
% B_j(z):= \prod_{l=0}^{N_j-1} b_{a_{j,l}}(z), \quad B_{(k)} (z):= % \prod_{j=1}^{k} B_j(z).
% \end{equation}

% Next, we choose a sequence $(\gamma_k)_{k\ge 1} \subset \D$, such that the points $ \gamma_k$
% tend to the boundary fast enough (this will be specified below).  Then we let
% \begin{equation}
\label{fullblaschke}

\subsection{Analogues for WEP}
\label{anawep}
The methods and results of the previous sections allow us to shed some light
on the previously considered notion of WEP functions \cite{GMN}. 
Let us say that a set is \emph{$R$-narrow} if it contains no hyperbolic disc $D_H(z,R)$.
Then an inner function $\Theta$ is WEP if and only if 
for any $R$, there exists $\eps(R)$ such that the set $\{0<|\Theta|<\eps(R)\}$
is $R$-narrow, while it is a Carleson--Newman Blaschke product if and only if 
for any $R$, there exists $\eps(R)$ such that the larger set $\{|\Theta|<\eps(R)\}$
is $R$-narrow. This last condition is equivalent to being hereditarily WEP, that is to
say, that any factor of $\Theta$ be WEP. This was already proved by P.Gorkin and R.Mortini in \cite[Corollary 4.6]{GM} using properties of the maximal ideal space of $H^\infty$. In this subsection we present a proof using our previous methods.

\begin{lemma}
\label{CN-narrow}
Let $\Theta$ be a Blaschke product with zeros $\{a_j\}$. The following conditions are equivalent:
\begin{enumerate}
\item $\Theta$ is a Carleson--Newman Blaschke product.
\item There exist $r\in(0,1)$ and $\eps\in (0,1)$ such that 
$$
D_\rho(a_k,r)\not\subset\{w: |\Theta(w)|<\eps\}, \qquad k\ge 1.
$$
\item For every $r\in (0,1)$ we have %there exists $c_r>0$ such that 
$$
\inf_{z\in \D}\frac{1}{2\pi}\int_{-\pi}^\pi \log| \Theta(\varphi_z(re^{i\theta}))|d\theta >-\infty.
$$
Here once again $\varphi_z (w) =\frac{z-w}{1 - w \overline{z}}$, $w \in \D$. 
\item For every $r\in (0,1)$ there exists $\eps\in (0,1)$ such that 
$$
D_\rho(z,r)\not\subset\{w: |\Theta(w)|<\eps\},\qquad z\in \D.
$$
\end{enumerate}
\end{lemma}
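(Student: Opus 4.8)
\textbf{Plan of proof for Lemma \ref{CN-narrow}.}
The strategy is to prove the cycle of implications $(1)\Rightarrow(3)\Rightarrow(2)\Rightarrow(1)$ and $(1)\Rightarrow(4)\Rightarrow(2)$, so that every condition is equivalent. The equivalence $(1)\Leftrightarrow(2)$ has already been recorded in Lemma \ref{CN-narrow1}, so in principle I only need to insert conditions (3) and (4) into the loop; still, a clean self-contained argument is worth giving. First I would prove $(1)\Rightarrow(3)$: fix $r\in(0,1)$ and $z\in\D$, and split the zeros $\{a_j\}$ of $\Theta$ into those in $D_\rho(z,s)$ and those outside, for some intermediate radius $s\in(r,1)$. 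For zeros far from $z$ the factors $b_{a_j}\circ\varphi_z$ are uniformly bounded below on $|w|=r$, and $-\int_{-\pi}^\pi\log|b_{a_j}(\varphi_z(re^{i\theta}))|\,d\theta/2\pi$ is comparable to $1-\rho(z,a_j)^2$ (up to an absolute constant depending on $r$, using the subharmonicity/mean value identity for $\log|b_a|$); summing and invoking the Carleson--Newman bound $\sup_z\sum_j(1-\rho(z,a_j)^2)<\infty$ controls this part. For the finitely many zeros inside $D_\rho(z,s)$, one uses that $\#\{j:\rho(z,a_j)<s\}$ is bounded uniformly in $z$ (again a consequence of the $(CN)$ condition) together with Jensen's formula on a fixed disc to bound $-\int\log|\varphi_{a_j}(\varphi_z(re^{i\theta}))|\,d\theta$ by a constant depending only on $r,s$. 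This gives the uniform lower bound in (3).

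Next, $(3)\Rightarrow(2)$ is the easy direction: if the integral mean of $\log|\Theta\circ\varphi_z|$ over the circle $|w|=r$ is bounded below by $-M$ uniformly in $z$, then for each $k$, taking $z=a_k$, the integral mean of $\log|\Theta\circ\varphi_{a_k}|$ over $|w|=r$ exceeds $-M$, so $\Theta\circ\varphi_{a_k}$ cannot be smaller than $e^{-M}$ everywhere on $|w|=r$; hence $D_\rho(a_k,r)\not\subset\{|\Theta|<e^{-M}\}$, which is (2) with $\eps=e^{-M}$. Then $(2)\Rightarrow(1)$ is exactly the content of Lemma \ref{CN-narrow1}. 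For the remaining implications, $(1)\Rightarrow(4)$ follows from the characterization of $\mathcal P$-type behaviour: actually (4) is the statement that $\{|\Theta|<\eps(r)\}$ is $R(r)$-narrow, and this is precisely the ``hereditarily WEP'' reformulation of being Carleson--Newman; I would derive it from $(1)\Rightarrow(3)$ by noting that a uniform lower bound on the circular mean of $\log|\Theta\circ\varphi_z|$ over $|w|=r$ forces, by the sub-mean-value property, that $\Theta\circ\varphi_z$ cannot be uniformly small on the smaller disc $|w|\le r'$ for $r'<r$, giving narrowness with a radius depending on $r$. Finally $(4)\Rightarrow(2)$ is trivial since (4) quantifies over all $z\in\D$, in particular over $z=a_k$.

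The main obstacle I anticipate is the quantitative bookkeeping in $(1)\Rightarrow(3)$: one must get a lower bound on $\frac{1}{2\pi}\int_{-\pi}^\pi\log|\Theta(\varphi_z(re^{i\theta}))|\,d\theta$ that is genuinely \emph{uniform} in $z$, and the delicate point is separating the contribution of zeros that are close to $z$ (where individual Blaschke factors contribute an unbounded negative amount pointwise, but a bounded amount in the circular-mean sense) from those that are far (where the contribution is small pointwise but must be summed over infinitely many zeros). The right tool is the identity $\frac{1}{2\pi}\int_{-\pi}^\pi\log|b_a(\varphi_z(re^{i\theta}))|\,d\theta = \log\max(r,\rho(z,a)) + $ bounded error, equivalently Jensen's formula applied on the disc $\varphi_z(D(0,r))$, which converts the pointwise-singular $\log|b_a|$ into something controlled by $1-\rho(z,a)^2$ once $\rho(z,a)$ is bounded away from $r$, and into a bounded quantity (via the uniform count of nearby zeros) otherwise. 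Everything else is a routine consequence of the equivalences already available in the paper, in particular Lemma \ref{CN-narrow1} and the $(CN)$ definition.
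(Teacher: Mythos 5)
Your proposal is correct, and its mathematical core coincides with the paper's proof: for $(1)\Rightarrow(3)$ both arguments are Jensen's formula, your factor-by-factor identity $\frac1{2\pi}\int_{-\pi}^{\pi}\log|b_{a_j}(\varphi_z(re^{i\theta}))|\,d\theta=\log\max\bigl(r,\rho(z,a_j)\bigr)$ summed over $j$ being exactly the paper's single application of Jensen to $\Theta\circ\varphi_z$ on the disc of radius $r$, after which both are dominated by the Carleson--Newman bound $\sup_z\sum_j(1-\rho(z,a_j))<\infty$ (your split at an intermediate radius $s$, with the uniform count of nearby zeros, plays the role of the paper's two-term estimate with constants $1/r$ and $-\log r/(1-r)$). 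Your routing of the easy steps, $(3)\Rightarrow(2)$ directly and $(1)\Rightarrow(4)$ via the circular mean, is interchangeable with the paper's $(3)\Rightarrow(4)\Rightarrow(2)$; the only care needed is that the circle $|w|=r$ parametrizes the boundary of $D_\rho(z,r)$, so to exhibit a point of the open disc where $|\Theta|\ge\eps$ you should invoke (3) at a slightly smaller radius --- you do this in $(1)\Rightarrow(4)$, and the same one-line adjustment is needed in your $(3)\Rightarrow(2)$. The one genuine divergence is $(2)\Rightarrow(1)$: you quote Lemma \ref{CN-narrow1}, which is logically legitimate since that lemma is recorded earlier as a known result of Mortini--Nicolau and Gorkin--Mortini, whereas the paper reproves it in a few lines from $1-\rho(a_k,a_j)\le 2e^{-2d_H(a_k,a_j)}$ and the triangle inequality for $d_H$, transferring the Blaschke sum at $a_k$ to the point $w\in D_\rho(a_k,r)$ where $|\Theta(w)|\ge\eps$ and then using $\sum_j(1-\rho(w,a_j))\le-\log|\Theta(w)|$. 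Since the declared purpose of this subsection is a proof by the paper's own elementary methods, avoiding the maximal-ideal-space arguments of Gorkin--Mortini, adding that short direct argument would make your proof fully self-contained; as written it is correct but leans on the cited equivalence for the key converse direction.
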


 For a similar result see \cite[Theorem 2.2]{MN}. The equivalence between (1) and (2) is due to P.Gorkin and R.Mortini \cite[Theorem 3.6]{GM}.

\begin{proof} (2) $\Longrightarrow$ (1) : 
For every $k\ge 1$, there exists  $w\in D_\rho(a_k,r)$ such that $ |\Theta(w)|\ge \eps$. Then
\begin{multline*}
\sum_j (1-\rho(a_k,a_j))\le 
2\sum_j e^{-2d_H(a_k,a_j)}\le 
2e^{2d_H(a_k,w)}\sum_j e^{-2d_H(w,a_j)}\\ \le
\frac{4}{1-\rho(a_k,w)}\sum_j (1-\rho(w,a_j))\le 
\frac{4}{1-r}\sum_j (1-\rho(w,a_j))\\ \le \frac{4}{1-r}\sum_j (- \log \rho(w,a_j))
\le \frac{-4 \log |\Theta(w) |}{1-r} \le \frac{4 \log (1/\eps)}{1-r}.
\end{multline*}

(1) $\Longrightarrow$ (3) : 
Denote $C=\sup_{z\in \D}\sum_j (1-\rho(z,a_j))$ and let $r\in (0,1)$ and $z\in \D$.  \\
We may assume without loss of generality that $\Theta(z)\not=0$, up to replacing $\Theta$ by $\frac{\Theta}{(\varphi_{z})^m}$ if  $z$ is a zero of $\Theta$ with multiplicity $m$.  Jensen's formula gives that 
\begin{multline*}
\frac{1}{2\pi}\int_{-\pi}^\pi -\log|\Theta(\varphi_z(re^{i\theta}))|d\theta
=-\log|\Theta(z)|-\sum_{a_j\in D_\rho(z,r)}\log \frac{r}{\rho(z,a_j)}
\\
=\sum_{j}-\log \rho(z,a_j)-\sum_{a_j\in D_\rho(z,r)}\log \frac{r}{\rho(z,a_j)}
\\=\sum_{a_j\not\in D_\rho(z,r)}-\log \rho(z,a_j)-\sum_{a_j\in D_\rho(z,r)}\log r
\\
\le \frac{1}{r} \sum_{a_j\not\in D_\rho(z,r)} (1-\rho(z,a_j))-\frac{\log r}{1-r}\sum_{a_j\in D_\rho(z,r)}(1-\rho(z,a_j))
\\
\le \frac{1}{r} \sum_{j} (1-\rho(z,a_j))\le \frac{C}{r} .
\end{multline*}

(3) $\Longrightarrow$ (4) : 
By (3) we find $c=c(r)$ such that  
\[
D_\rho(z,r)\not\subset\{w: |\Theta(w)|<e^{-c}\},\qquad z\in \D.
\]
(4) $\Longrightarrow$ (2) is trivially true.
\end{proof}

\begin{proposition}
\label{WEP}
An inner function $\Theta$ has the {\rm WEP} if and only if for every $r\in (0,1)$ there exists $\eps\in (0,1)$ such that
\begin{equation}\label{wep-narrow}
D_\rho(z,r)\not\subset\{w: 0<|\Theta(w)|<\eps\},\qquad z\in \D.
\end{equation}
\end{proposition}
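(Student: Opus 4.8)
The plan is to reduce the statement to the Gorkin--Mortini--Nikolski criterion recalled in the introduction, namely that $\Theta$ has the WEP if and only if $\eta_\Theta(t)>0$ for every $t\in(0,1)$; it then suffices to show that this last condition is equivalent to \eqref{wep-narrow}. Both implications are short, the only delicate point being a Harnack estimate carried out on zero-free pseudohyperbolic discs with a fixed safety margin from the boundary.

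First I would prove that the WEP forces \eqref{wep-narrow}. Fix $r\in(0,1)$ and set $\eps:=\min(\eta_\Theta(r),1/2)\in(0,1)$. For a given $z\in\D$, either $D_\rho(z,r)$ contains a zero of $\Theta$, which then lies in $D_\rho(z,r)$ but not in $\{w:0<|\Theta(w)|<\eps\}$, or else $\rho(z,\ZTH)\ge r$, so $|\Theta(z)|\ge\eta_\Theta(r)\ge\eps$ and $z$ itself lies in $D_\rho(z,r)$ but not in $\{w:0<|\Theta(w)|<\eps\}$. In either case $D_\rho(z,r)\not\subset\{w:0<|\Theta(w)|<\eps\}$, which is \eqref{wep-narrow}.

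For the converse I would fix $t\in(0,1)$, take the constant $\eps=\eps(t/2)\in(0,1)$ supplied by \eqref{wep-narrow} for the radius $t/2$, and bound $|\Theta(z)|$ from below for an arbitrary $z$ with $\rho(z,\ZTH)\ge t$. On such a $z$ the disc $D_\rho(z,t)$ is zero-free, so $H:=-\log|\Theta|$ is non-negative and harmonic there; since $\varphi_z$ carries $D_\rho(z,t)$ onto the Euclidean disc $D(0,t)$ and sends $z$ to $0$, classical Harnack on $D(0,t)$ yields $H(z)=(H\circ\varphi_z)(0)\le 3\inf_{D_\rho(z,t/2)}H$. Because $D_\rho(z,t/2)$ is zero-free, applying \eqref{wep-narrow} (with radius $t/2$) to this disc produces a point $w\in D_\rho(z,t/2)$ with $|\Theta(w)|\ge\eps$, i.e. $\inf_{D_\rho(z,t/2)}H\le\log(1/\eps)$; hence $|\Theta(z)|\ge\eps^3$. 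As $z$ was arbitrary with $\rho(z,\ZTH)\ge t$, this gives $\eta_\Theta(t)\ge\eps^3>0$; since $t\in(0,1)$ was arbitrary, $\Theta$ has the WEP by \cite{GMN}.

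I expect the only real subtlety to lie in this converse: one must invoke the narrowness hypothesis on the shrunken disc $D_\rho(z,t/2)$ rather than on $D_\rho(z,t)$ itself, so that the Harnack constant stays uniformly controlled (indeed $\le 3$) away from the boundary of the zero-free region. It is also worth contrasting this with the Carleson--Newman case of Lemma \ref{CN-narrow}, where the escape point is required to satisfy $|\Theta|\ge\eps>0$: here the escape point in \eqref{wep-narrow} is allowed to be a zero of $\Theta$, which is precisely why the WEP corresponds to narrowness of $\{0<|\Theta|<\eps\}$ and not of $\{|\Theta|<\eps\}$ --- though in the proof of the converse this is immaterial, since the disc on which the hypothesis is used contains no zeros.
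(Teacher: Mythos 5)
Your proof is correct and follows essentially the same route as the paper: both directions reduce to the Gorkin--Mortini--Nikolski criterion $\eta_\Theta(t)>0$, with the forward implication using the escape point $z$ itself or a zero, and the converse using Harnack on a zero-free pseudohyperbolic disc restricted to the half-radius subdisc, yielding the same constant $3$ and the bound $|\Theta(z)|\ge\eps^3$. The only difference is presentational: the paper phrases the converse contrapositively (not WEP implies a non-narrow half-disc inside $\{0<|\Theta|<\eps\}$), whereas you argue directly, which is equivalent.
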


\begin{proof} If $\Theta$ has the WEP, then for every $r\in (0,1)$ and for every $z\in \D$, either $\ZTH\cap D_\rho(z,r)\not=\emptyset$ 
or $|\Theta(z)|\ge \eta_\Theta(r)>0$, so $\eps= \eta_\Theta(r)$ satisfies \eqref{wep-narrow}. 

Conversely, if $\Theta$ does not have the WEP, then there exists $r>0$
such that for all $\eps>0$,  there exists $z\in \D$ with $\rho(z, \ZTH) \ge r$ and $|\Theta(z)|<\eps^3$.
 On $D_\rho(z,r)$, $-\log|\Theta|$ is a positive harmonic function,
so applying Harnack's inequality, we obtain that for any $w\in D_\rho(z,\frac{r}{2})$, 
\[-\log|\Theta(w)| \ge -\frac{r-\rho(z,w)}{r+\rho(z,w)}\log|\Theta(z)|\ge-\frac{1}{3} \log|\Theta(z)|>  -\log  \eps.\]
So $D_\rho(z,\frac{r}{2})\subset \{w: 0<|B(w)|<\eps\}$ which contradicts (\ref{wep-narrow}).
\end{proof}

Finally we can characterize Carleson--Newman Blaschke products by the hereditary WEP property, as proved in \cite[Corollary 4.6]{GM}.

\begin{proposition}
\label{WEPeq}
Let $\Theta$ be an inner function. Then $\Theta$ is a Carleson--Newman Blaschke product
if and only if the factorization $\Theta=\Theta_1\Theta_2$, with inner functions $\Theta_1, \Theta_2$, implies that  
$\Theta_1$ and $\Theta_2$ have the {\rm WEP}.
\end{proposition}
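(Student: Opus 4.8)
\textbf{Proof proposal for Proposition \ref{WEPeq}.}
The plan is to establish both implications by reducing everything to the narrowness characterizations already proved. First I would handle the easier direction: suppose $\Theta$ is a Carleson--Newman Blaschke product and $\Theta=\Theta_1\Theta_2$. Each $\Theta_i$ is then inner and divides $\Theta$, and since $|\Theta|\le|\Theta_i|$ on $\D$, the sublevel set $\{w:|\Theta_i(w)|<\eps\}$ is contained in $\{w:|\Theta(w)|<\eps\}$. By the equivalence (1)$\Leftrightarrow$(4) of Lemma \ref{CN-narrow}, for every $r\in(0,1)$ there is $\eps\in(0,1)$ with $D_\rho(z,r)\not\subset\{w:|\Theta(w)|<\eps\}$ for all $z\in\D$; the containment above then gives the same statement for $\Theta_i$, and in particular $D_\rho(z,r)\not\subset\{w:0<|\Theta_i(w)|<\eps\}$. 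By Proposition \ref{WEP}, $\Theta_i$ has the WEP. (Note that $\Theta_i$ might have a singular factor a priori, but in fact a divisor of a Blaschke product is a Blaschke product; either way the argument through Proposition \ref{WEP} applies to any inner divisor.)

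For the converse, I would argue by contraposition: assume $\Theta$ is \emph{not} a Carleson--Newman Blaschke product and produce a factorization $\Theta=\Theta_1\Theta_2$ with one factor failing the WEP. If $\Theta$ has a nontrivial singular inner factor $S$, write $\Theta=S\cdot(\Theta/S)$; a singular inner function is zero-free, so $\mathcal Z(S)=\emptyset$, yet $\inf_{\mathcal Z(S)}|g|=+\infty$ (vacuously $\ge$ any bound) while $[g]$ need not be invertible, and more directly $S$ fails the WEP because $|S|$ tends to $0$ along suitable radii approaching $\mathrm{supp}\,\mu$ while the zero set is empty, so $\eta_S(t)=0$ for all $t$. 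Hence we may assume $\Theta$ is a Blaschke product with zeros $\{a_j\}$ that is not Carleson--Newman. By the failure of condition (2) in Lemma \ref{CN-narrow} (equivalently Lemma \ref{CN-narrow1}), for every $r\in(0,1)$ and every $\eps\in(0,1)$ there is an index $k$ with $D_\rho(a_k,r)\subset\{w:|\Theta(w)|<\eps\}$.

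The heart of the argument is then to split the zero sequence into two subsequences $\{a_j\}=A_1\sqcup A_2$, with corresponding Blaschke products $\Theta_1,\Theta_2$, so that at least one of them, say $\Theta_1$, fails the WEP, i.e.\ fails \eqref{wep-narrow}. The idea is to extract, for $r=1/2$ and $\eps_n\downarrow 0$ along a sequence, zeros $a_{k_n}$ such that the pseudohyperbolic disc $D_\rho(a_{k_n},1/2)$ is deeply inside a sublevel set; passing to a subsequence we can also assume the discs $D_\rho(a_{k_n},1)$ are pairwise disjoint and that the "far" zeros contribute negligibly. Put the zeros of $\Theta$ lying in $\bigcup_n D_\rho(a_{k_n},1)$ into $A_1$ and the rest into $A_2$. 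Then near $a_{k_n}$ the function $\Theta_1$ still has a small modulus coming from the many nearby zeros, while $\Theta_1$ has a zero at $a_{k_n}$ itself, which is not quite what we want; the correct choice is to arrange that the \emph{center} point of the small sublevel region is at bounded distance from $\mathcal Z(\Theta_1)$ but $|\Theta_1|$ is still small there, using the Harnack-type propagation exactly as in the proof of Proposition \ref{WEP}: on $D_\rho(a_{k_n},1/2)$ one finds a point $w_n$ with $\rho(w_n,a_{k_n})$ close to $1/2$ (hence bounded away from all of $\mathcal Z(\Theta)$, so certainly $\rho(w_n,\mathcal Z(\Theta_1))\ge$ some fixed positive constant if we are slightly careful about which zeros go into $A_1$) yet $|\Theta(w_n)|<\eps_n$, whence $|\Theta_1(w_n)|<\eps_n$ too. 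This exhibits points at fixed pseudohyperbolic distance from $\mathcal Z(\Theta_1)$ with $|\Theta_1|\to 0$, so $\Theta_1$ violates \eqref{wep-narrow} and, by Proposition \ref{WEP}, does not have the WEP.

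The main obstacle is the bookkeeping in that last step: one must split the zeros so that the "witness" points $w_n$ are genuinely bounded away from $\mathcal Z(\Theta_1)$ (not merely from $\mathcal Z(\Theta)$), since throwing the cluster of nearby zeros into $A_1$ could place a zero of $\Theta_1$ right next to $w_n$. The clean way around this is to choose $w_n$ in the annulus $\{w:\tfrac14<\rho(a_{k_n},w)<\tfrac12\}$ where the minimum principle for $-\log|\Theta|$ still forces small modulus by Harnack, and to note that by super-separation along a subsequence (or simply by discarding finitely many indices and using disjointness of the discs $D_\rho(a_{k_n},1)$) no zero of $\Theta$ at all lies in a fixed pseudohyperbolic neighborhood of $w_n$ except those in $D_\rho(a_{k_n},1)$, which we may shrink. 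Alternatively one can avoid splitting altogether by invoking the hereditary formulation directly: \emph{every} inner divisor of a Carleson--Newman Blaschke product is WEP by the first paragraph, and conversely if $\Theta$ is not Carleson--Newman then Lemma \ref{CN-narrow} shows its full sublevel sets are not $r$-narrow, and a single well-chosen divisor (remove the zeros outside the bad discs) already fails \eqref{wep-narrow} — this is essentially the argument used for the analogous SIP statement in Section \ref{classP}, transplanted to the WEP setting. I would present this last, cleaner version.
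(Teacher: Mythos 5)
Your forward direction is fine (it is a mild variant of the paper's one-liner that a factor of a Carleson--Newman product is Carleson--Newman, hence WEP), and so is the disposal of a singular factor. The gap is in the core of the converse. Your construction rests on the claim that a point $w_n$ with $\rho(w_n,a_{k_n})$ close to $\tfrac12$ is ``bounded away from all of $\mathcal Z(\Theta)$'': this is unjustified and in the only interesting case it is false. Indeed, one may assume $\Theta$ itself has the WEP (otherwise $\Theta$ is already a factor failing the WEP and you are done), and then $|\Theta(w_n)|<\eps_n\to 0$ forces $\rho(w_n,\mathcal Z(\Theta))\to 0$ by the very definition of $\eta_\Theta$; so no witness point can be simultaneously far from $\mathcal Z(\Theta)$ and a point of smallness of $|\Theta|$. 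For the same reason your splitting $A_1\sqcup A_2$ is inconclusive as stated: the smallness of $|\Theta|$ near $a_{k_n}$ may come entirely from zeros clustered at $a_{k_n}$ (think of high multiplicities or tight clusters, which is exactly how a non-CN product can look); if those zeros go into the factor you test, the witness is no longer far from its zero set, and if they go into the other factor, nothing guarantees the tested factor is small at the witness. Your ``cleaner version'' does not repair this: the negation of Carleson--Newman only says the \emph{full} Blaschke sums $S_0(z)$ are unbounded, whereas the Section~\ref{classP} argument you want to transplant starts from unboundedness of the \emph{tail} sums $\tilde S_x$; the cluster scenario is precisely the case the transplant does not cover.

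The paper closes exactly this hole by using the hypothesis on $\Theta$ once more. Assuming (as one may) that $\Theta$ has the WEP and is not Carleson--Newman, it picks zeros $\lambda_{k_m}$, pairwise hyperbolically separated, with $S_m=\sum_j e^{-2d_H(\lambda_{k_m},\lambda_j)}\to\infty$. Then $|\Theta|\le e^{-S_m/e}$ on all of $D_H(\lambda_{k_m},\tfrac12)$, and the WEP (via $\varkappa_\Theta$) forces every point of that disc to lie close to $\mathcal Z(\Theta)$; hence the number $N_m$ of zeros in the annulus $D_H(\lambda_{k_m},\tfrac12)\setminus D_H(\lambda_{k_m},\tfrac14)$ tends to infinity. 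Deleting only the zeros within hyperbolic distance $\tfrac14$ of the points $\lambda_{k_m}$ yields a divisor $\Theta_1$ with $\rho(\lambda_{k_m},\mathcal Z(\Theta_1))\ge\tanh\tfrac14$ while $|\Theta_1(\lambda_{k_m})|\le (\tanh\tfrac12)^{N_m}\to 0$, so $\Theta_1$ fails \eqref{wep-narrow} and hence the WEP by Proposition~\ref{WEP}. The step you are missing is this quantitative use of the WEP of $\Theta$ (or some substitute dichotomy between ``large tails'' and ``large clusters'') to certify many zeros at \emph{definite} distance from the witness point before you decide which zeros to remove; without it, the choice of the split and of $w_n$ cannot be made coherently.
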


\begin{proof}
The direct implication is clear, since any factor of a Carleson--Newman Blaschke product
is again Carleson--Newman, thus has the WEP, see \cite{GMN}.

Conversely, if all factors of $\Theta$ have the WEP, then $\Theta$ cannot have a singular factor. 
Let now $\Theta$ be a Blaschke product with zeros $(\lambda_k)_k$, which by the hypothesis has the WEP. 
Suppose that it is not Carleson--Newman.  Then 
$\sup_k \sum_j e^{-2d_H(\lambda_k, \lambda_j)}=\infty$, but each individual sum converges.
So we can choose a sequence $(\lambda_{k_m})_m$ such that $d_H (\lambda_{k_m},\lambda_{k_{m'}})>1$, $m\not= m'$, 
and, letting $S_m:=\sum_j e^{-2d_H(\lambda_{k_m}, \lambda_j)}$,
we have $\lim_{m\to\infty} S_m=\infty$.

For every $z\in D_H(\lambda_{k_m},\frac12)$, by the triangle inequality, we obtain 
\[
\log \frac1{|\Theta(z)|} \ge \sum_j e^{-2d_H(z, \lambda_j)} \ge \frac1e S_m ,
\]
so for every $z\in D_H(\lambda_{k_m},\frac12)$, we have 
$$
d_H (z, \ZTH)\le \tanh^{-1}(\varkappa_\Theta\left( \exp(-S_m/e))\right).
$$
In particular, the number $N_m$ of zeros of $\Theta$ in 
$D_H(\lambda_{k_m},\frac12) \setminus D_H(\lambda_{k_m},\frac14)$ tends to infinity.

Define now $\Theta_1$ to be the Blaschke product with zeroes at 
$\{\lambda_j: d_H(\lambda_j, \lambda_{k_m})\ge \frac14,\, m\ge 1\}$. We have 
$\rho(\lambda_{k_m}, \mathcal Z(\Theta_1)) \ge \tanh \frac14$ for $m\ge 1$, but
$|\Theta_1(\lambda_{k_m})| \le \exp(-N_m \tanh \frac12) \to 0$, which shows that $\Theta_1$ does not have the WEP.
\end{proof}

\subsection{$\eta_\Theta$ could be discontinuous}\label{dis5} Here we give a simple example of a finite Blaschke product $B$ with discontinuous $\eta_B$.

\begin{example} Let $r\in(0,1)$ be sufficiently small, let $\lambda_k=re^{i\pi(2k+1)/4}$, $0\le k\le 3$, and let  
$$
B=\prod_{0\le k\le 3}b_{\lambda_k}.
$$ 
Then $\eta_B$ is not continuous.
\end{example}

\begin{proof} We have
$$
B(z)=\frac{z^4+r^4}{1+r^4z^4},\qquad B(0)=r^4, \qquad |B(z)|\ge \frac{|z|^4-r^4}{1-r^4|z|^4}.
$$ 

Let $\Omega=\{z\in\mathbb D:\rho(z,\mathcal Z(B))>r\}$. Then $\Omega\cap (0,1)=(s,1)$, with
$$
\Bigl| \frac{s-re^{i\pi/4}}{1-sre^{-i\pi/4}} \Bigr| =r, 
$$
that is 
$$
s=\frac{\sqrt{2}r}{1+r^2}.
$$ 

Therefore, 
$$
\Omega\subset\Bigl\{z\in\mathbb D:|z|>\frac{\sqrt{2}r}{1+r^2}\Bigr\},
$$
and 
$$
\inf_{z\in\Omega}|B(z)|\ge  \frac{\frac{4r^4}{(1+r^2)^4}-r^4}{1-r^4\frac{4r^4}{(1+r^2)^4}}=r^4\frac{4-(1+r^2)^4}{(1+r^2)^4-4r^8}.
$$
Thus, for sufficiently small $r>0$ we have
$$
\lim_{s\to r+0}\eta_B(s)\ge 2 r^4.
$$
Since
$$
\{z\in\mathbb D:\rho(z,\mathcal Z(B))\ge r\}=(\overline{\Omega}\cap\D)\cup\{0\},
$$
we obtain that $\eta_B(r)=r^4$ and, thus, $\eta_B$ is not continuous at the point $r$.
\end{proof}

\end{document}